\def\today{{\number\day\space
 \ifcase\month\or
  January\or February\or March\or April\or May\or June\or
  July\or August\or September\or October\or November\or December\fi
 \space\number\year}}
\newcommand\barD{{\overline D}}
\newcommand\clspan{{\overline{\mathrm{span}}\,}}
\newcommand\Cpx{{\mathbb C}}
\newcommand\Dc{{\mathcal{D}}} 
\newcommand\DT{\operatorname{DT}}
\newcommand\eps{\varepsilon}
\newcommand\Gbar{{\overline G}}
\newcommand\Hc{{\mathcal{H}}}
\newcommand\Ints{{\mathbb Z}}
\newcommand\lspan{\mathrm{span}\,}
\newcommand\Mcal{{\mathcal{M}}}
\newcommand\Nats{{\mathbb N}}
\newcommand\Reals{{\mathbb R}}
\newcommand\Rt{{\widetilde R}}
\newcommand\supp{\operatorname{supp}}
\newcommand\Tcirc{{\mathbb T}}
\theoremstyle{plain}
\newtheorem{thm}{Theorem}[section]
\newtheorem{lem}[thm]{Lemma}
\newtheorem{prop}[thm]{Proposition}
\newtheorem{thmsubs}{Theorem}[subsection]
\newtheorem{corsubs}[thmsubs]{Corollary} 
\newtheorem{lemsubs}[thmsubs]{Lemma} 
\newtheorem{propsubs}[thmsubs]{Proposition}
\theoremstyle{definition}
\newtheorem{defi}[thm]{Definition}
\newtheorem{ex}[thm]{Example}
\theoremstyle{remark}
\newtheorem{rem}[thm]{Remark}
\newtheorem{claim}{Claim}[thm]
\newtheorem{ques}[thm]{Question}
\newtheorem{ques2}[thm]{Questions}
\begin{document}

\title[Decomposability]{Decomposibility and norm convergence properties in finite von Neumann algebras}

\author[Dykema]{Ken Dykema$^*$}
\address{K. Dykema, Department of Mathematics, Texas A\&M University, College Station, TX, USA.}
\email{ken.dykema@math.tamu.edu}
\author[Noles]{Joseph Noles$^\dag$}
\address{J. Noles, Department of Mathematics, Texas A\&M University, College Station, TX, USA.}
\email{jnoles@math.tamu.edu}
\author[Zanin]{Dmitriy Zanin$^{\S}$}
\address{D. Zanin, School of Mathematics and Statistics, University of new South Wales, Kensington, NSW, Australia.}
\email{d.zanin@math.unsw.edu.au}
\thanks{\footnotesize
{}$^*$This work was supported by a grant from the Simons Foundation/SFARI (524187, K.D.)}
\thanks{\footnotesize
{${}^\dag$Portions of this work are included in the thesis of J.\ Noles for partial fulfillment of the requirements to obtain a Ph.D. degree at Texas A\&M University.}}
\thanks{\footnotesize
${}^{\S}$Research supported by ARC}

\subjclass[2010]{47C15}

\keywords{Decomposability, finite von Neumann algebra}

\begin{abstract}
We study Schur-type upper triangular forms for elements, $T$, of von Neumann algebras equipped with faithful,
normal, tracial states.
These were introduced in a paper of Dykema, Sukochev and Zanin; they are based on Haagerup--Schultz projections.
We investigate  when the s.o.t.-quasinilpotent part of this decomposition of $T$ is actually quasinilpotent.
We prove implications involving decomposability and strong decomposability of $T$.
We show this is related to norm convergence properties of the sequence $|T^n|^{1/n}$ which, by a result of Haagerup and Schultz,
is known to converge in strong operator topology.
We introduce a Borel decomposability, which is a property appropriate for elements of finite von Neumann algebras,
and show that the circular operator is Borel decomposable.
We also prove the existence of a thin-spectrum s.o.t.-quasinilpotent operator in the hyperfinite II$_1$-factor.
\end{abstract}

\date{October 14, 2017}

\maketitle

\section{Introduction}

The spectrum of an operator on Hilbert space provides important information about it.
Even better is when the operator can be decomposed into pieces using parts of the spectrum.
The Spectral Theorem for normal operators is a strong example of this sort of result.
For non-normal operators, the situation is more complicated.
In the finite dimensional case, we have the Jordan Canonical Form of an operator and the related upper triangular form due to Issai Schur.
In the infinite dimensional case, there are various classes of operators that enjoy nice decomposition properties involving spectrum.
See the book~\cite{LN00} for an excellent treatment.
We will be interested in the {\em decomposable} operators, a notion introduced by Foia\c{s}~\cite{F63} (see~\S\ref{subsec:Decomp} for a definition).

L.\ Brown~\cite{Br86} introduced his spectral distribution measure for an arbitrary operator $T$ in a tracial von Neumann algebra,
by which we mean a von Neumann algebra $\Mcal$ equipped with a normal, faithful, tracial state $\tau$.
This measure, now known as the Brown measure of $T\in\Mcal$, generalizes the spectral counting measure (weighted according to algebraic multiplicity)
for matrices and the usual distribution (i.e., $\tau$ composed with spectral measure) for normal operators.
One issue of interest is that the support of $\nu_T$ is always a subset of the spectrum $\sigma(T)$, but need not be equal to it.

Haagerup and Schultz~\cite{HS09}
proved existence of analogues of generalized eigenspaces for operators $T$ in tracial von Neumann algebras.
Given a Borel subset $B$ of the complex numbers, they found a $T$-hyperinvariant projection $P(T,B)$ satisfying $\tau(P(T,B))=\nu_T(B)$
and splitting the Brown measure according to $B$ and its complement.
For a precise statement see Theorem \ref{thm:hsproj}

In \cite{DSZ} and \cite{Nol}, Sukochev and the authors constructed upper-triangular forms for operators in tracial von Neumann algebras.
These decompositions are of the form $T=N+Q$, where $N$ is normal, $Q$ is s.o.t.-quasinilpotent, and $T$ and $N$ have the same Brown measure.
The constructions generalize the Schur upper triangular form of an $n\times n$ matrix.
The normal part $N$ is constructed as the conditional expectation of $T$ onto an abelian algebra generated by an increasing net of Haagerup-Schultz projections of $T$.
In this paper, we will be concerned only with the upper triangular forms $T=N+Q$ arising from continuous spectral orderings. 
See~\S\ref{subsec:UT} for more details.

Recall, for a bounded operator $A$ on a Hilbert space, the notation $|A| = (A^*A)^{1/2}$ for the positive part of $A$.
A bounded operator $Q$ on Hilbert space is said to be {\em s.o.t.-quasi\-nil\-po\-tent} if $|Q^n|^{1/n}$
converges in strong-operator-topology to $0$ as $n\to\infty$.
A principal motivation for studying these operators is the characterization,
proved by Haagerup and Schultz~\cite{HS09}, that, for elements of a tracial von Neumann algebra,
being s.o.t.-quasinilpotent is equivalent to having Brown measure concentrated at $0$.

The principal results of this paper were motivated by the following question:
\begin{ques}\label{ques:basic}
Given an element $T$ in a tracial von Neumann algebra
and a Schur-type upper-triangular form $T=N+Q$ from~\cite{DSZ},
under what circumstances is the s.o.t.-quasinilpotent
operator $Q$ actually quasinilpotent?
\end{ques}

We give a partial answer to this question in terms of decomposability.
First of all, it should be noted that Haagerup and Schultz proved in~\cite{HS09} that their projection $P(T,B)$ has range space equal to the closure of
the local spectral subspace of $T$ for $B$.
We use this, together with a characterization of decomposability due to Lange and Wang~\cite{LW87} to find a characterizations of decomposability 
for an element $T\in\Mcal$ in terms of spectra of compressions of $T$ by certain projections involving Haagerup--Schultz projections of $T$
--- see Proposition~\ref{prop:disks}.
An operator on Hilbert space is said to be {\em strongly decomposable} if its restrictions to local spectral subspaces for closed subsets of $\Cpx$ are all decomposable.
In Proposition~\ref{prop:strdec}, we characterize strong decomposability in terms of spectra of compressions of $T$.

Our main result, Theorem~\ref{thm:decqn}, includes the implications, for an operator $T\in\Mcal$,
\begin{gather*}
T\text{ is strongly decomposable} \\
\Downarrow \\
\text{in every Schur-type upper triangular form }T=N+Q \\
\text{ from a continuous spectral ordering, }Q\text{ is quasinilpotent} \\
\Downarrow \\
T\text{ is decomposable.}
\end{gather*}

We also relate Question~\ref{ques:basic} to certain norm convergence properties involving positive parts of powers of $T$.
In \cite{HS09}, Haagerup and Schultz show that whenever $T$ is an operator in a tracial von Neumann algebra,
the sequence $|T^n|^{1/n}$ has a strong operator limit as $n\to\infty$,
and that the limit is determined by the Haagerup-Schultz projections of $T$ associated with disks centered at $0$
(see Theorem~\ref{thm:hsconv} below).
This result motivates our next definition, which generalizes the property of being quasinilpotent.
\begin{defi}\label{def:NCP}
An operator $T$
in a C$^*$-algebra
has the {\em norm convergence property} if the sequence $|T^n|^{1/n}$ convergent in norm.
Assuming the C$^*$-algebra is unital,
we say that $T$ has the {\em shifted norm convergence property} if $T-\lambda1$ has the norm convergence property for every complex number $\lambda$.
\end{defi}

A naive guess is that the answer to Question~\ref{ques:basic} is: $Q$ is quasinilpotent if and only if $T$ has the norm convergence property.
However, this is not correct, as we show by explicit construction of a counter-example in Example~\ref{ex}.
A less naive guess is that $Q$ is quasinilpotent if and only if $T$ has the shifted norm convergence property.
This may be true and, in Theorem~\ref{thm:totdisc}, we prove it is true when the Brown measure of $T$ has totally disconnected support.
Furthermore, our main result proves the implication, for general $T\in\Mcal$:
\begin{gather*}
T\text{ is decomposable.} \\
\Downarrow \\
T\text{ has the shifted norm convergence property.}
\end{gather*}

We also, in Section~\ref{sec:Boreldec},
introduce a stronger version of decomposability called {\em Borel decomposability}
(see Definition~\ref{def:fssdecomp})
that is
appropriate to elements of finite von Neumann algebras and natural in connection with Brown measure and Haagerup--Schultz projections.
We show that the DT-operators of~\cite{DH04}, including Voiculescu's circular operator, are all Borel decomposable.

\medskip
We now turn to the topic of the spectrum of an s.o.t.-quasinilpotent operator.
It follows from Remark~4.4 of~\cite{Br86} that, for a general element of a tracial von Neumann algebra,
every connected component of the spectrum must meet the support of the Brown measure;
thus, the spectrum of an s.o.t.-quasinilpotent operator must be a closed, connected set containing $0$.

In the course of these investigations, we also gain knowledge about operators that are s.o.t.-quasinilpotent but not quasinilpotent.

A natural example of an s.o.t.-quasinilpotent operator that is not quasinilpotent is provided by the direct sum
\begin{equation}\label{eq:QJordan}
Q=\oplus_{n=1}^\infty J_n\in\bigoplus_{n=1}^\infty M_n(\Cpx),
\end{equation}
where $J_n$ is the $n\times n$ Jordan block.
Note that this can be realized inside the hyperfinite II$_1$-factor.
Since $Q$ has the same $*$-distribution as $e^{i\theta}Q$ for every real $\theta$
(i.e., its $*$-distribution is invariant under rotation),
and since the spectral radius of $Q$
is easily computed to be $1$, we have that the spectrum of $Q$ is the unit disk centered at the origin.

Also the examples of s.o.t.-quasinilpotent operators found in~\cite{DS09} that are not quasinilpotent
clearly have $*$-distributions that are invariant under rotations
and, thus, have spectra that are disks centered at the origin.
Prior to this writing, every example of such an operator which has appeared in the literature or could be constructed
therefrom using holomorphic functional calculus, has had a spectrum with non-empty interior.
In Theorem~\ref{thm:thinspec},
we construct an s.o.t.-quasinilpotent, non-quasinilpotent operator having thin spectrum (i.e., contained in an interval).
This example is, in turn, used in the aforementioned Example~\ref{ex}.

The contents of the rest of this paper are as follows.
In Section~\ref{sec:background}, we provide some necessary background.
In Section~\ref{sec:Boreldec}, we characterize decomposability and strong decomposability for elements of finite von Neumann algebras in terms
of spectra and Haagerup--Schultz projections.
We also introduce the notion of Borel decomposability and provide some examples.
Section~\ref{sec:mainthm} states and proves our main result and some related results, and asks some related questions.
Section~\ref{sec:thinspec} contains the construction of an s.o.t.-quasinilpotent operator with thin spectrum.
Section~\ref{sec:ncpsotqn} concerns norm convergence properties for s.o.t.-quasinilpotent operators.
Section~\ref{sec:finsupp} focusses on elements having finitely supported Brown measure,
culminating in Theorem~\ref{thm:fsequivs}, giving several equivalent characterizations for such elements.

\section{Preliminaries and notation}
\label{sec:background}

Throughout the paper, the following notation and language will be used: the word trace will refer to a normal, faithful, tracial state.
$\Mcal$ will be a von Neumann algebra of operators on a Hilbert space $\mathcal{H}$ and having a
fixed
trace $\tau$.
Unless otherwise specified, $T$ will be an element of $\Mcal$ and
then $\sigma(T)$ will denote the spectrum of $T$.
Finally, we use the standard notations:
$\Cpx$ is the complex plane, $\mathbb{D}$ is the open unit disk in $\Cpx$ centered at the origin, and $\mathbb{T}$ is the unit circle, namely,
the boundary of $\mathbb{D}$.

\subsection{Brown measure and Haagerup-Schultz projections}

In \cite{Br86}, L.~Brown introduced a generalization of the spectral distribution measure for not necessarily normal operators in tracial von Neumann algebras.

\begin{thmsubs}\label{thm:brnmeas}
Let $T\in\Mcal$.
Then there exists a unique probability measure $\nu_T$ such that for every $\lambda\in\Cpx$, 
$$\int_{[0,\infty)}\log(x)\,d\mu_{|T-\lambda|}(x)=\int_\Cpx \log|z-\lambda|\,d\nu_T(z),$$
where for a positive operator $S$, $\mu_S$ denotes the spectral distribution measure $\tau \circ E$, where $E$ is the spectral measure of $S$.
\end{thmsubs}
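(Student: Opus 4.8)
The plan is to realize $\nu_T$ as the Riesz measure of a subharmonic function coming from the Fuglede--Kadison determinant. Set
\[
  f(\lambda)\;:=\;\int_{[0,\infty)}\log x\,d\mu_{|T-\lambda|}(x)\;\in\;[-\infty,\infty),\qquad\lambda\in\Cpx ,
\]
the logarithm of the Fuglede--Kadison determinant of $T-\lambda$. Since $\mu_{|T-\lambda|}$ lives on $[0,\|T-\lambda\|]$ the integrand is bounded above, so $f$ is well defined with values in $[-\infty,\infty)$. First I would record its behaviour on the resolvent set and at infinity: factoring $T-\lambda=(T-\lambda_0)(1-(\lambda-\lambda_0)(T-\lambda_0)^{-1})$ for $\lambda_0\notin\sigma(T)$ and using multiplicativity of the Fuglede--Kadison determinant together with the norm-convergent power series of the logarithm, $f$ is locally the real part of a holomorphic function, hence harmonic, on $\Cpx\setminus\sigma(T)$; and the analogous factorization $T-\lambda=-\lambda(1-\lambda^{-1}T)$ gives $f(\lambda)=\log|\lambda|+\operatorname{Re}\tau(\log(1-\lambda^{-1}T))=\log|\lambda|+O(|\lambda|^{-1})$ as $|\lambda|\to\infty$. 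In particular $f\not\equiv-\infty$.

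The crucial step is to show that $f$ is subharmonic on all of $\Cpx$, and the plan for this is a regularization argument. For $\eps>0$ put
\[
  f_\eps(\lambda)\;:=\;\tfrac12\,\tau\!\Big(\log\!\big((T-\lambda)^*(T-\lambda)+\eps\big)\Big).
\]
Each $f_\eps$ is finite and (locally Lipschitz, hence) continuous in $\lambda$, and $f_\eps(\lambda)\downarrow f(\lambda)$ as $\eps\downarrow0$ by monotone convergence for the spectral measure of $(T-\lambda)^*(T-\lambda)$. Writing $S=T-\lambda$, $A_\eps=S^*S+\eps$, $B_\eps=SS^*+\eps$ and differentiating under the trace --- using $\partial_{\bar\lambda}A_\eps=-S$, $\partial_\lambda A_\eps=-S^*$, $\partial_\lambda(A_\eps^{-1})=-A_\eps^{-1}(\partial_\lambda A_\eps)A_\eps^{-1}$ and the intertwining $SA_\eps=B_\eps S$ --- a short computation yields
\[
  \partial_\lambda\partial_{\bar\lambda}f_\eps(\lambda)\;=\;\tfrac{\eps}{2}\,\tau\!\big(B_\eps^{-1/2}A_\eps^{-1}B_\eps^{-1/2}\big)\;\ge\;0 ,
\]
the nonnegativity being clear since $B_\eps^{-1/2}A_\eps^{-1}B_\eps^{-1/2}\ge0$. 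Hence each $f_\eps$ is subharmonic, and $f$, as a decreasing limit of subharmonic functions that is not identically $-\infty$, is subharmonic.

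Now let $\nu_T:=\tfrac1{2\pi}\Delta f$ denote the Riesz measure of $f$; the distributional Laplacian of a subharmonic function is a positive distribution, hence (Riesz representation theorem) a positive Radon measure. Because $f$ is harmonic on $\Cpx\setminus\sigma(T)$ we get $\supp\nu_T\subseteq\sigma(T)$, so $\nu_T$ has compact support and, in particular, finite total mass $m:=\nu_T(\Cpx)$. Put $v(\lambda):=\int_\Cpx\log|z-\lambda|\,d\nu_T(z)$; then $v$ is subharmonic with $\tfrac1{2\pi}\Delta v=\nu_T$, so $u:=f-v$ is harmonic on all of $\Cpx$ by Weyl's lemma. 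Expanding $\log|z-\lambda|$ for large $|\lambda|$ gives $v(\lambda)=m\log|\lambda|+O(|\lambda|^{-1})$, hence $u(\lambda)=(1-m)\log|\lambda|+O(|\lambda|^{-1})$ as $|\lambda|\to\infty$. Thus the entire harmonic function $u$ satisfies $|u(\lambda)|\le C+|1-m|\log(2+|\lambda|)$; writing $u=\operatorname{Re}F$ with $F$ entire, Borel--Carath\'eodory forces $|F(z)|=O(\log|z|)$, so $F$, and hence $u$, is constant. Comparing with the asymptotics forces $m=1$ and then $u\equiv0$, i.e.\ $f(\lambda)=\int_\Cpx\log|z-\lambda|\,d\nu_T(z)$ for all $\lambda$, which is exactly the claimed identity with $\nu_T$ a probability measure. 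Uniqueness is immediate: if $\nu$ is any compactly supported finite (signed) measure with logarithmic potential $f$, then $\nu=\tfrac1{2\pi}\Delta f=\nu_T$ by taking distributional Laplacians.

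The step I expect to be the main obstacle is the subharmonicity of $f$: carrying out the operator-valued differentiation under the trace with full rigour (the passage through $\partial_\lambda$ and $\partial_{\bar\lambda}$ of operator-valued functions, and the justification of differentiating $\tau\circ\log$), and then checking that the decreasing limit $f=\lim_{\eps\downarrow0}f_\eps$ is genuinely subharmonic and not identically $-\infty$. The normalisation $m=1$ is the second delicate point, resting on the precise $O(|\lambda|^{-1})$ control of $f$ at infinity.
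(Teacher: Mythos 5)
The paper does not prove this statement: it is Theorem~\ref{thm:brnmeas}, quoted as background from L.~Brown's 1986 paper \cite{Br86} (see also the exposition in Haagerup--Schultz \cite{HS07}). So there is no internal proof to compare against; I am comparing your sketch to Brown's original argument, which it essentially reproduces, correctly and with the right emphasis on where the work lies.

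Your outline is sound. Setting $f(\lambda)=\log\Delta(T-\lambda)$, the key steps are exactly the ones you identify: (1) harmonicity of $f$ off $\sigma(T)$ and the asymptotic $f(\lambda)=\log|\lambda|+O(|\lambda|^{-1})$, both obtained from multiplicativity of the Fuglede--Kadison determinant and the norm-convergent power series for $\log(1-X)$ when $\|X\|<1$; (2) subharmonicity of $f$ on all of $\Cpx$ via the regularization $f_\eps(\lambda)=\tfrac12\tau(\log(S^*S+\eps))$ with $S=T-\lambda$; (3) definition of $\nu_T$ as $\tfrac1{2\pi}$ times the Riesz measure of $f$, compact support from step (1), and the Riesz decomposition $f=v+u$ with $u$ entire harmonic; (4) the growth bound forcing $u$ constant and $m=1$. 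Your computation
\[
\partial_\lambda\partial_{\bar\lambda} f_\eps=\tfrac\eps2\,\tau\bigl(B_\eps^{-1/2}A_\eps^{-1}B_\eps^{-1/2}\bigr)\ge0
\]
is correct: with $\partial_\lambda A_\eps=-S^*$, $\partial_{\bar\lambda}A_\eps=-S$ one gets $\partial_\lambda\partial_{\bar\lambda}f_\eps=\tfrac12\bigl(\tau(A_\eps^{-1})-\tau(A_\eps^{-1}SA_\eps^{-1}S^*)\bigr)$, and the intertwining $SA_\eps^{-1}=B_\eps^{-1}S$ together with $SS^*=B_\eps-\eps$ gives $\tau(A_\eps^{-1}SA_\eps^{-1}S^*)=\tau(A_\eps^{-1})-\eps\,\tau(A_\eps^{-1}B_\eps^{-1})$, leaving exactly the claimed positive quantity. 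This is precisely the regularization Brown (and later Haagerup--Schultz) use.

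Two points you correctly flag as requiring care are indeed the delicate ones. First, justifying the interchange of $\tau$ with the complex derivatives of $\lambda\mapsto\log(S^*S+\eps)$: this is usually done by differentiating the finite-rank (or spectral-truncation) approximations and passing to the limit, or by expressing $\log(A_\eps)-\log(\eps)=\int_0^\infty\!\bigl((t+\eps)^{-1}-(t+A_\eps)^{-1}\bigr)\,dt$ and differentiating under the integral sign; either route is legitimate but needs to be spelled out. Second, the passage from a decreasing limit of subharmonic functions to subharmonicity of $f$ requires that $f\not\equiv-\infty$ on the (connected) plane, which your asymptotic expansion supplies. Beyond these, the Weyl-lemma and Borel--Carath\'eodory steps are standard and stated correctly, and the uniqueness argument via $\nu=\tfrac1{2\pi}\Delta f$ is exactly right. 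In short, this is a faithful and correct reconstruction of the standard proof, not an alternative route.
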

The measure $\nu_T$ in Theorem \ref{thm:brnmeas} is called the {\em Brown measure} of $T$.
If $T$ is normal, then $\nu_T$ equals the spectral distribution of $T$.

The following is an easy consequence of an observation made by L.\ Brown~\cite{Br86}.
\begin{propsubs}\label{prop:concomp}
Let $T\in\Mcal$.  Then the support $\supp(\nu_T)$ of the Brown measure of $T$ meets every connected component of the spectrum $\sigma(T)$.
\end{propsubs}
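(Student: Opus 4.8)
The plan is to reduce the assertion to the following statement: $\nu_T$ assigns strictly positive mass to every nonempty relatively clopen subset of $\sigma(T)$. Granting this, suppose for contradiction that some connected component $K$ of $\sigma(T)$ satisfies $K\cap\supp(\nu_T)=\emptyset$. Since $\sigma(T)$ is compact Hausdorff, $K$ is the intersection of all relatively clopen subsets of $\sigma(T)$ containing it; as $\supp(\nu_T)$ is a compact subset of $\sigma(T)$ disjoint from this intersection, finitely many of the open sets $\sigma(T)\setminus C$ (with $C$ relatively clopen, $K\subseteq C$) cover $\supp(\nu_T)$, and the intersection of the corresponding sets $C$ is a nonempty relatively clopen $\Sigma_0\subseteq\sigma(T)$ with $K\subseteq\Sigma_0$ and $\Sigma_0\cap\supp(\nu_T)=\emptyset$. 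Then $\nu_T(\Sigma_0)\le\nu_T(\Cpx\setminus\supp(\nu_T))=0$, contradicting the reduced statement.

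To prove the reduced statement, fix a nonempty relatively clopen $\Sigma_0\subseteq\sigma(T)$ and let $f$ be holomorphic on a neighborhood of $\sigma(T)$ with $f\equiv1$ near $\Sigma_0$ and $f\equiv0$ near $\sigma(T)\setminus\Sigma_0$. Put $E=f(T)\in\Mcal$, the associated Riesz idempotent: it commutes with $T$, is nonzero, and the restriction of $T$ to the range of $E$ has spectrum $\Sigma_0$. Because an idempotent in a von Neumann algebra is similar, via an invertible element $S\in\Mcal$, to a projection $P\in\Mcal$, traciality gives $\tau(E)=\tau(SES^{-1})=\tau(P)$, which is $>0$ by faithfulness of $\tau$. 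Replacing $T$ by $STS^{-1}$ — which has the same spectrum and, by multiplicativity of the Fuglede--Kadison determinant, the same value of $\int\log x\,d\mu_{|T-\lambda|}(x)$ for every $\lambda$, hence by Theorem~\ref{thm:brnmeas} the same Brown measure — we may assume $E=P$ is a projection commuting with $T$.

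Now write $T_0=PTP$ and $T_1=(1-P)T(1-P)$, viewed as elements of the finite von Neumann algebras $P\Mcal P$ and $(1-P)\Mcal(1-P)$ equipped with their normalized traces, so that $T=T_0\oplus T_1$ and, by the Riesz decomposition, $\sigma(T_0)=\Sigma_0$. Since the Fuglede--Kadison determinant factorizes over this direct sum, the characterizing identity of Theorem~\ref{thm:brnmeas} gives, for all $\lambda$,
\[
\int_\Cpx\log|z-\lambda|\,d\nu_T(z)=\int_\Cpx\log|z-\lambda|\,d\bigl(\tau(P)\nu_{T_0}+(1-\tau(P))\nu_{T_1}\bigr)(z),
\]
and, as the right-hand measure is a probability measure, uniqueness in Theorem~\ref{thm:brnmeas} forces $\nu_T=\tau(P)\nu_{T_0}+(1-\tau(P))\nu_{T_1}$, with $\nu_{T_0}$ a probability measure supported in $\sigma(T_0)=\Sigma_0$. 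Hence $\nu_T(\Sigma_0)\ge\tau(P)\,\nu_{T_0}(\Sigma_0)=\tau(P)>0$, as required. (This additivity of $\nu_T$ along a spectral idempotent is exactly the observation of L.~Brown alluded to; alternatively one simply quotes it from~\cite{Br86}.)

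The compactness reduction and the standard behaviour of the Fuglede--Kadison determinant under similarities and direct sums are routine. The only point needing care is that the Riesz idempotent $E$ need not be self-adjoint: I would dispose of this by invoking the standard fact that idempotents in C$^*$-algebras are similar within the algebra to projections, which at once yields $\tau(E)>0$ and the reduction to a commuting projection; barring that, one works directly with $E$ and cites Brown's decomposition of the Brown measure.
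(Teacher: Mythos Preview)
Your argument is correct and follows essentially the same route as the paper: both reduce to Brown's observation that $\nu_T$ gives positive mass to every nonempty clopen subset of $\sigma(T)$, then use the fact that in a compact Hausdorff space connected components coincide with quasi-components to pass from clopen sets to components. The only substantive difference is that the paper simply cites Brown's observation from \cite{Br86}, whereas you supply a self-contained proof of it (via the Riesz idempotent, similarity to a projection, and additivity of Brown measure over direct sums); your compactness argument and the paper's decreasing-sequence argument are equivalent formulations of the same topological step.
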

\begin{proof}
By Remark~4.4 of~\cite{Br86}, we have
$\nu_T(C)>0$ for every nonempty, (relatively) clopen subset $C$ of $\sigma(T)$.
Let $K$ be a connected component of $\sigma(T)$.
By a standard argument, there is a sequence $F_1\supseteq F_2\supseteq\cdots$ of clopen subsets of $\sigma(T)$ whose intersection is $K$.
Indeed, for all elements $y\in\sigma(T)\setminus K$, there exists a clopen neighborhood $U_y$ of $y$ in $\sigma(T)$ that is disjoint from $K$.
Since the topological space $\sigma(T)\setminus K$ is Lindel\"of, there is a sequence $y(1),y(2),\ldots$ in $\sigma(T)\setminus K$ such that
$\bigcup_{j=1}^\infty U_{y(j)}=\sigma(T)\setminus K$.
Let $F_k=\sigma(T)\setminus\big(\bigcup_{k=1}^k U_{y(j)}\big)$.

Now, by Brown's observation mentioned above, for each $k$ there is $x_k\in F_k\cap\supp(\nu_T)$.
By compactness of $\sigma(T)$, after replacing $(F_k)_{k=1}^\infty$ by a subsequence, if necessary, we may without loss of generality assume that
the sequence $(x_k)_{k=1}^\infty$ converges to some element $x$ of $\sigma(T)$.
Since each $F_k$ is closed, $x$ belongs to $\bigcap_{k=1}^\infty F_k=K$, and since $\supp(\nu_T)$ is closed, $x\in\supp(\nu_T)$.
\end{proof}

The following is from the main result (Theorem 1.1) of \cite{HS09}.
It provides projections that split the operator $T$ according to the Brown measure.
\begin{thmsubs}\label{thm:hsproj}
For any Borel set $B\subseteq \mathbb{C}$, there exists a unique projection $p=P(T,B)$ such that 
\begin{enumerate}[label=(\roman*),leftmargin=30pt]
\item $Tp = pTp$,
\item $\tau(p) = \nu_T(B)$,
\item when $p\ne0$, considering $Tp$ as an element of $p\mathcal{M}p$, its Brown measure $\nu_{Tp}$ is concentrated in $B$,
\item when $p\ne1$, considering $(1-p)T$ as an element of $(1-p)\mathcal{M} (1-p)$, $\nu_{(1-p)T}$ is concentrated in $\mathbb{C} \setminus B$.
\end{enumerate}
Moreover, $P(T,B)$ is $T$-hyperinvariant and $B_1\subseteq B_2$ implies $P(T,B_1) \leq P(T,B_2)$.
\end{thmsubs}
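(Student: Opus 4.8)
This is Theorem~1.1 of Haagerup and Schultz~\cite{HS09}, and we only outline the strategy of proof, whose details occupy the bulk of that paper.

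\emph{Reduction to disks.} The plan is to prove existence and uniqueness first in the case that $B$ is a closed disk $\overline{B(\lambda,r)}$, and dually when $B$ is the complement $\Cpx\setminus B(\lambda,r)$ of an open disk. Conditions (i)--(iv) and the Brown measure are covariant under the affine substitution $T\mapsto(T-\lambda 1)/r$ (which carries $\nu_T$ to its pushforward under $z\mapsto(z-\lambda)/r$), so it suffices to treat $B=\overline{\mathbb{D}}$ and $B=\Cpx\setminus\mathbb{D}$. These sets, together with their finite intersections, form a $\pi$-system that generates the Borel $\sigma$-algebra of $\Cpx$; so once the projections have been built on this $\pi$-system with the monotonicity $B_1\subseteq B_2\Rightarrow P(T,B_1)\le P(T,B_2)$ and with strong-operator continuity along increasing unions (so that $P(T,\bigcup_n B_n)=\bigvee_n P(T,B_n)$), a Dynkin-system (monotone class) argument extends $P(T,\cdot)$ to all Borel sets.

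\emph{The disk case.} For $B=\overline{\mathbb{D}}$ one constructs $p$ directly from the powers $T^n$, as a strong-operator limit of $T$-invariant projections extracted from the polar and spectral decompositions of the $T^n$, arranged so that the range of $p$ is the closure of the local spectral subspace $X_T(\overline{\mathbb{D}})$; radii $r$ for which the circle $\{z:|z|=r\}$ carries an atom of $\nu_T$ require a one-sided limiting argument in $r$. The projection for $\Cpx\setminus\mathbb{D}$ is produced dually from $T$; note it is \emph{not} simply $1-p$, since in general $pT(1-p)\ne0$. Once $p$ is at hand, condition~(i) ($T$-invariance of the range of $p$) is built in, and $T$-hyperinvariance follows because the construction takes place canonically inside $\Mcal$ from $T$ alone, so any bounded operator commuting with $T$ commutes with $p$.

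\emph{The four properties.} The substantive points are (iii) and (iv); then (ii) is a corollary. The tool is the Fuglede--Kadison determinant $\Delta$ together with the integral formula of Theorem~\ref{thm:brnmeas}. Writing $T$ as a $2\times 2$ operator matrix with respect to $p$ --- upper triangular by (i), with diagonal entries $Tp\in p\Mcal p$ and $(1-p)T\in(1-p)\Mcal(1-p)$ --- multiplicativity of $\Delta$ over block-triangular operators gives, for every $\lambda$,
\[
\int_{\Cpx}\log|z-\lambda|\,d\nu_T(z)=\tau(p)\int_{\Cpx}\log|z-\lambda|\,d\nu_{Tp}(z)+\tau(1-p)\int_{\Cpx}\log|z-\lambda|\,d\nu_{(1-p)T}(z),
\]
that is, $\nu_T=\tau(p)\,\nu_{Tp}+\tau(1-p)\,\nu_{(1-p)T}$. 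Hence once one shows that $\nu_{Tp}$ is carried by $\overline{\mathbb{D}}$ (slow growth of the powers of $Tp$ in $p\Mcal p$) and that $\nu_{(1-p)T}$ is carried by $\Cpx\setminus\overline{\mathbb{D}}$ ($(1-p)T$ being ``invertible enough'' in its corner), conditions (iii) and (iv) hold, and comparing the masses of $\overline{\mathbb{D}}$ in the displayed identity yields $\tau(p)=\nu_T(\overline{\mathbb{D}})$, which is (ii).

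\emph{Uniqueness, monotonicity, and the crux.} If $p$ and $p'$ both satisfy (i)--(iv) for $\overline{\mathbb{D}}$, one argues that each of their ranges lies between $X_T(\overline{\mathbb{D}})$ and its closure --- equivalently, a $T$-invariant projection caught between $p$ and $p'$ would have Brown measure forced simultaneously into $\overline{\mathbb{D}}$ and into $\Cpx\setminus\overline{\mathbb{D}}$, hence be $0$ --- and then $\tau(p)=\tau(p')=\nu_T(\overline{\mathbb{D}})$ upgrades this to $p=p'$; the monotonicity $P(T,B_1)\le P(T,B_2)$ follows from the same comparison. The main obstacle is the disk case of the preceding paragraph: showing that the canonically-constructed invariant projection has \emph{exactly} the Brown-measure trace and splits $\nu_T$ with no mass leaking across the bounding circle. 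This is where finiteness of $\Mcal$ is indispensable, and where the delicate estimates of~\cite{HS09} on Fuglede--Kadison determinants (and their strong-operator convergence theorem for $|T^n|^{1/n}$) do the real work; the reduction to disks, the monotone-class extension, and the uniqueness argument are comparatively formal.
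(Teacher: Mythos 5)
The paper does not prove this theorem at all: it simply quotes it as Theorem~1.1 of Haagerup--Schultz \cite{HS09}, exactly as you do, so there is no internal argument to compare against. Your high-level sketch of the \cite{HS09} proof is broadly faithful to their strategy (reduce to closed disks, build the projection from the powers $T^n$, split $\nu_T$ via multiplicativity of the Fuglede--Kadison determinant over block-triangular decompositions, then extend to general Borel sets by lattice/monotone-class arguments). Two small imprecisions are worth noting: the approximating projections Haagerup and Schultz extract from the spectral decompositions of $|T^n|$ are \emph{not} themselves $T$-invariant --- only their strong-operator limit is, and establishing that limit is a substantial part of the work; and the identification of the range of $P(T,\overline{\mathbb{D}})$ with the closure of the local spectral subspace is a theorem they derive afterward (Proposition~9.2 of \cite{HS09}, quoted in this paper as Proposition~\ref{prop:HSdecomp}), not something ``arranged'' at the outset of the construction.
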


The projection $P(T,B)$ is called the {\em Haagerup-Schultz projection} of $T$ associated with $B$.

The results about Brown measure and Haagerup-Schultz projections in the following lemma are 
are basic and easy to prove except, perhaps, for the last of them, which is Corollary 7.27 of~\cite{HS09}.
\begin{lemsubs}\label{lem:basicHS}
Let $T\in\Mcal$.
Then for any $\lambda\in\mathbb{C}$ and any Borel set $B\subseteq\mathbb{C}$, letting $B^*$ denote the image of $B$ under complex conjugation, we have 
\begin{enumerate}[label=(\roman*),leftmargin=30pt]
\item $\nu_{(T-\lambda)}(B) = \nu_T(B+\lambda)$\label{i}
\item $\nu_{T^*}(B) = \nu_T(B^*)$\label{ii}
\item $P(T-\lambda,B) = P(T,B+\lambda)$\label{iii}
\item\label{it:PT*} $P(T^*,B) = 1-P(T,\mathbb{C}\setminus B^*)$.\label{iv}
\end{enumerate}
\end{lemsubs}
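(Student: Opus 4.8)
The plan is to derive parts \ref{i} and \ref{ii} from the uniqueness clause of Theorem~\ref{thm:brnmeas}, and parts \ref{iii} and \ref{iv} from the uniqueness of the Haagerup--Schultz projection as characterized by conditions (i)--(iv) of Theorem~\ref{thm:hsproj}. For \ref{i}: since $|(T-\lambda)-\mu|=|T-(\lambda+\mu)|$ for every $\mu\in\Cpx$, the spectral distributions coincide and
\[
\int_{[0,\infty)}\log(x)\,d\mu_{|(T-\lambda)-\mu|}(x)=\int_\Cpx\log|z-(\lambda+\mu)|\,d\nu_T(z)=\int_\Cpx\log|w-\mu|\,d\nu(w),
\]
where $\nu$ is the push-forward of $\nu_T$ under $z\mapsto z-\lambda$, i.e.\ $\nu(B)=\nu_T(B+\lambda)$; as this holds for all $\mu$, the measure $\nu$ satisfies the defining identity of the Brown measure of $T-\lambda$, so $\nu=\nu_{T-\lambda}$ by Theorem~\ref{thm:brnmeas}. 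For \ref{ii}: $T^*-\lambda=(T-\overline\lambda)^*$, and since $\tau$ is a trace $(T-\overline\lambda)^*(T-\overline\lambda)$ and $(T-\overline\lambda)(T-\overline\lambda)^*$ have equal $\tau$-moments, so $\mu_{|T^*-\lambda|}=\mu_{|T-\overline\lambda|}$; combining this with the defining identity of $\nu_T$ and the substitution $w=\overline z$ (using $|z-\overline\lambda|=|\overline z-\lambda|$) shows that the push-forward of $\nu_T$ under complex conjugation satisfies the defining identity of $\nu_{T^*}$, hence equals it, which is \ref{ii}.

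For \ref{iii}: put $p:=P(T,B+\lambda)$ and check the four conditions of Theorem~\ref{thm:hsproj} characterizing $P(T-\lambda,B)$. Condition~(i) is immediate from $Tp=pTp$; condition~(ii) is part~\ref{i}; and conditions~(iii) and (iv) follow by applying part~\ref{i} \emph{inside} the compressions $p\Mcal p$ and $(1-p)\Mcal(1-p)$, since replacing $Tp$ by $(T-\lambda)p$ (resp.\ $(1-p)T$ by $(1-p)(T-\lambda)$) subtracts $\lambda$ times the identity of the compressed algebra and hence translates the relevant Brown measure by $-\lambda$. Uniqueness then gives $P(T-\lambda,B)=p$.

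For \ref{iv}: set $r:=P(T,\Cpx\setminus B^*)$ and $q:=1-r$, and check that $q$ satisfies the four conditions characterizing $P(T^*,B)$. Condition~(i), $T^*q=qT^*q$, is the adjoint of $qT=qTq$, which follows from $Tr=rTr$. Condition~(ii): $\tau(q)=1-\nu_T(\Cpx\setminus B^*)=\nu_T(B^*)=\nu_{T^*}(B)$ by part~\ref{ii}. Condition~(iii): $Tr=rTr$ shows $qT=(1-r)T$ lies in $q\Mcal q$, and condition~(iv) of Theorem~\ref{thm:hsproj} applied to $r$ shows that its Brown measure, computed in $q\Mcal q$, is concentrated on $B^*$; since the adjoint of $qT$ inside $q\Mcal q$ is $T^*q$, part~\ref{ii} applied inside $q\Mcal q$ shows $\nu_{T^*q}$ is concentrated on $B$. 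Condition~(iv) is the symmetric statement: $Tr$ lies in $r\Mcal r$ with Brown measure concentrated on $\Cpx\setminus B^*$ by condition~(iii) of Theorem~\ref{thm:hsproj} applied to $r$, its adjoint inside $r\Mcal r$ is $rT^*=(1-q)T^*$, and part~\ref{ii} applied inside $r\Mcal r$ shows $\nu_{(1-q)T^*}$ is concentrated on $\Cpx\setminus B$. Uniqueness then yields $P(T^*,B)=q=1-P(T,\Cpx\setminus B^*)$.

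The main (essentially the only) obstacle is the bookkeeping in part~\ref{iv}: one must verify that $qT$ really is an element of $q\Mcal q$ --- this is exactly where $Tr=rTr$ enters --- that the adjoint operation and the Brown measure computed internally to $q\Mcal q$ and $r\Mcal r$ are the expected ones, and that part~\ref{ii}, applied now within these compressed algebras rather than within $\Mcal$, delivers precisely the two concentration statements needed. The degenerate cases $r=0$ and $r=1$ are handled directly, the relevant conditions being vacuous.
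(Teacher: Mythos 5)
Your proof is correct. The paper itself gives no proof here: it states that parts~\ref{i}--\ref{iii} are basic and easy, and for part~\ref{iv} it simply cites Corollary~7.27 of Haagerup--Schultz~\cite{HS09}. What you have done is supply a self-contained derivation of all four parts from the uniqueness clauses of Theorems~\ref{thm:brnmeas} and~\ref{thm:hsproj}. Your argument for \ref{i} and \ref{ii} --- pushing the Brown measure forward under $z\mapsto z-\lambda$ and $z\mapsto\overline z$, using $|(T-\lambda)-\mu|=|T-(\lambda+\mu)|$ and the traciality-based identity $\mu_{|T^*-\lambda|}=\mu_{|T-\overline\lambda|}$ --- is precisely the ``basic and easy'' computation the authors had in mind. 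For \ref{iii} and \ref{iv} you verify the four defining properties of the Haagerup--Schultz projection for the candidate projections $P(T,B+\lambda)$ and $1-P(T,\Cpx\setminus B^*)$, transferring the Brown-measure information into the corner algebras via parts \ref{i} and \ref{ii} applied internally. The key structural point in \ref{iv} --- that $Tr=rTr$ gives $qT=qTq\in q\Mcal q$ and hence $T^*q=qT^*q$, and that the adjoint of $qT$ taken within $q\Mcal q$ is $T^*q$ --- is handled correctly, and the degenerate cases $r\in\{0,1\}$ are appropriately noted as vacuous. The net effect is a proof of part~\ref{iv} that avoids the appeal to \cite{HS09} at the cost of a longer verification; either approach is valid, and yours has the advantage of keeping the paper self-contained on this point.
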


The following natural lattice properties of Haagerup--Schultz projections were proved in Theorem~3.3 of~\cite{CDSZ}.
\begin{thmsubs}\label{thm:HSprojLattice}
Let $B_1,B_2,\ldots$ be Borel subsets of $\Cpx$.
Then
\begin{align*}
\bigvee_{n=1}^\infty P(T,B_n)&=P\bigg(T,\bigcup_{n=1}^\infty B_n\bigg), \\
\bigwedge_{n=1}^\infty P(T,B_n)&=P\bigg(T,\bigcap_{n=1}^\infty B_n\bigg).
\end{align*}
\end{thmsubs}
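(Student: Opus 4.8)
The plan is to prove the first identity (for joins) directly, and then derive the second (for meets) from it by means of the adjoint formula in part~(iv) of Lemma~\ref{lem:basicHS} together with De Morgan's laws. Write $p_n=P(T,B_n)$, $B=\bigcup_{n=1}^\infty B_n$, $p=P(T,B)$, and $q=\bigvee_{n=1}^\infty p_n$. Since $B_n\subseteq B$, the monotonicity in Theorem~\ref{thm:hsproj} gives $p_n\le p$ for every $n$, hence $q\le p$; as $\tau$ is faithful it therefore suffices to prove $\tau(q)\ge\tau(p)=\nu_T(B)$, and I intend to do this without checking the full list of defining properties of $P(T,B)$ for $q$. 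The degenerate cases are immediate: if $q=1$ then $q\le p\le1$ forces $p=q$, and if $q=0$ then $\nu_T(B_n)=\tau(p_n)=0$ for all $n$, so $\nu_T(B)=0=\tau(p)$; assume henceforth $0\ne q\ne1$.

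The main tool is the triangular decomposition of the Brown measure along an invariant projection: if $e\in\Mcal$ is a projection with $Te=eTe$, then, writing $T$ in block form with respect to $e$ and $1-e$ and combining the defining equation of the Brown measure with the multiplicativity of the Fuglede--Kadison determinant over triangular forms, one obtains
\[
\nu_T=\tau(e)\,\nu_{eTe}+\tau(1-e)\,\nu_{(1-e)T(1-e)},
\]
the Brown measures on the right being computed in the corners $e\Mcal e$ and $(1-e)\Mcal(1-e)$ with respect to the renormalized traces. The subspace $q\Hc=\overline{\sum_{n}p_n\Hc}$ is $T$-invariant, being the closed linear span of the $T$-invariant subspaces $p_n\Hc$, so $Tq=qTq$ and $(1-q)T=(1-q)T(1-q)$. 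Fix $n$; from $p_n\le q$ we get $1-q\le1-p_n$, and then $(1-q)\bigl((1-p_n)T\bigr)=(1-q)T=\bigl((1-q)T\bigr)(1-q)$, so that $1-q$ sits in triangular position relative to the operator $(1-p_n)T$ inside $(1-p_n)\Mcal(1-p_n)$. The corresponding triangular decomposition exhibits $\nu_{(1-p_n)T}$ as a $\tau$-weighted average in which $\nu_{(1-q)T(1-q)}$ occurs with the positive weight $\tau(1-q)/\tau(1-p_n)$. Since $\nu_{(1-p_n)T}$ is concentrated on $\Cpx\setminus B_n$ by part~(iv) of Theorem~\ref{thm:hsproj}, so is $\nu_{(1-q)T(1-q)}$; as $n$ was arbitrary, $\nu_{(1-q)T(1-q)}$ is concentrated on $\bigcap_{n}(\Cpx\setminus B_n)=\Cpx\setminus B$. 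Applying the displayed decomposition with $e=q$ and evaluating on $B$ now gives
\[
\nu_T(B)=\tau(q)\,\nu_{Tq}(B)+\tau(1-q)\,\nu_{(1-q)T}(B)=\tau(q)\,\nu_{Tq}(B)\le\tau(q),
\]
whence $\tau(q)\ge\nu_T(B)=\tau(p)$; together with $q\le p$ and faithfulness of $\tau$ this forces $q=p$.

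For the meet, apply part~(iv) of Lemma~\ref{lem:basicHS} and the join identity just established, now to $T^*$: with $B_n^*$ the complex conjugate of $B_n$,
\[
\bigwedge_{n=1}^\infty P(T,B_n)=\bigwedge_{n=1}^\infty\bigl(1-P(T^*,\Cpx\setminus B_n^*)\bigr)=1-\bigvee_{n=1}^\infty P(T^*,\Cpx\setminus B_n^*)=1-P\Bigl(T^*,\,\Cpx\setminus\bigcap_{n=1}^\infty B_n^*\Bigr),
\]
and since $\bigcap_n B_n^*=\bigl(\bigcap_n B_n\bigr)^*$, a second application of part~(iv) of Lemma~\ref{lem:basicHS} identifies the right-hand side with $P\bigl(T,\bigcap_n B_n\bigr)$. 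The only step I expect to require care is the triangular decomposition of the Brown measure along an invariant projection, and in particular the bookkeeping in the degenerate cases where one of the two corners is trivial; but this is a standard fact — it is precisely what underlies the upper-triangular forms of~\cite{DSZ} — so the argument should go through without difficulty. (Normality of $\tau$ would give an alternative treatment of increasing and decreasing sequences of sets via monotone approximation, but it is not needed for the route above.)
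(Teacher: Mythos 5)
The paper does not contain its own proof of this statement: Theorem~\ref{thm:HSprojLattice} is quoted, without argument, from Theorem~3.3 of~\cite{CDSZ}, so there is no in-paper proof to compare against. Your proof is correct and self-contained given the other facts the paper records. The two steps carrying the weight are both sound. First, for each $n$ the projection $q-p_n$ is $(1-p_n)T$-invariant in the corner $(1-p_n)\Mcal(1-p_n)$ (equivalently, $1-q$ is co-invariant there, as you check), so Theorem~\ref{thm:brmeassplit} exhibits $\nu_{(1-q)T}$ as absolutely continuous with respect to $\nu_{(1-p_n)T}$ with density bounded by $\tau(1-p_n)/\tau(1-q)$; since the latter measure assigns no mass to $B_n$, so does the former, and countable subadditivity then kills $B=\bigcup_n B_n$. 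Second, reducing the verification that $q=P(T,B)$ to the single trace inequality $\tau(q)\ge\nu_T(B)$ is legitimate precisely because monotonicity already gives $q\le P(T,B)$, and faithfulness of $\tau$ then forces equality of projections from equality of traces. The degenerate-case bookkeeping ($q=0$, $q=1$, and implicitly $q=p_n$, where the corner decomposition trivializes) is handled correctly, and the derivation of the meet formula from the join formula via Lemma~\ref{lem:basicHS}\ref{it:PT*} and De Morgan's law for the projection lattice is standard. One cosmetic remark: the ``triangular decomposition of the Brown measure along an invariant projection'' you set up from scratch is exactly Theorem~\ref{thm:brmeassplit} of this paper (Proposition~10 of~\cite{DSZ}), so you could cite it directly rather than re-deriving it from multiplicativity of the Fuglede--Kadison determinant.
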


Furthermore, the following results were proved as Theorem~3.4 and Corollary~3.5 of~\cite{CDSZ}.
\begin{thmsubs}\label{thm:Pcompress}
Let $T\in\Mcal$.
Suppose $Q$ is a $T$-invariant projection and let $B$ be a Borel subset of $\Cpx$.
Then
\begin{align*}
P^{(Q)}(TQ,B)&=P(T,B)\wedge Q, \\
P^{(1-Q)}((1-Q)T,B)&=Q\vee P(T,B)-Q=\big(Q\vee P(T,B)\big)\wedge(1-Q),
\end{align*}
where $P^{(Q)}(\cdot,\cdot)$ and $P^{(1-Q)}(\cdot,\cdot)$ indicate the Haagerup--Schultz projection computed in the
compression of $\Mcal$ by $Q$ and $1-Q$, respectively.
\end{thmsubs}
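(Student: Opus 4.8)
The plan is to establish the two displayed identities separately: the first from the uniqueness characterization of Haagerup--Schultz projections (Theorem~\ref{thm:hsproj}), and the second by reducing it to the first via the adjoint. Write $p=P(T,B)$. I would use three facts freely. First, since $Q$ is $T$-invariant (i.e.\ $TQ=QTQ$), the subspaces $p\Hc$ and $Q\Hc$ are closed and $T$-invariant, so their intersection is too; hence $p\wedge Q$ is $T$-invariant. Second, for any $T$-invariant projection $q$, writing $T$ in block form with respect to $q+(1-q)$ exhibits it as upper triangular with diagonal blocks $Tq$ and $(1-q)T$, so multiplicativity of the Fuglede--Kadison determinant gives the additivity $\nu_T=\tau(q)\,\nu_{Tq}+\tau(1-q)\,\nu_{(1-q)T}$ (Brown measures computed in $q\Mcal q$ and $(1-q)\Mcal(1-q)$ with normalized traces); in particular, concentration of $\nu_T$ in a Borel set is inherited by $\nu_{Tq}$ and $\nu_{(1-q)T}$. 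Third---and this is the one external input I would want to quote---$P(T,B)$ is the \emph{largest} $T$-invariant projection $e$ for which $\nu_{Te}$ (computed in $e\Mcal e$) is concentrated in $B$; this is a reformulation of the Haagerup--Schultz identification of $\operatorname{ran}P(T,B)$ with the closure of the appropriate local spectral subspace.

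For the first identity I would put $r_0=P^{(Q)}(TQ,B)$ and $f=p\wedge Q$ and show $r_0=f$. Since $r_0\le Q$ and $Q$ is $T$-invariant, a one-line computation turns the relation $(TQ)r_0=r_0(TQ)r_0$ into $Tr_0=r_0Tr_0$, so $r_0$ is $T$-invariant in $\Mcal$; moreover $Tr_0=(TQ)r_0$, whose Brown measure in $r_0\Mcal r_0$ is concentrated in $B$ by property (iii) of Theorem~\ref{thm:hsproj} applied inside $Q\Mcal Q$. By the maximality fact, $r_0\le p$, and since $r_0\le Q$ this gives $r_0\le f$. Conversely, $f$ is $T$-invariant and $\le Q$, hence $TQ$-invariant in $Q\Mcal Q$; it is also $\le p$, hence $Tp$-invariant in $p\Mcal p$, and applying the additivity fact inside $p\Mcal p$ to split $p$ by $f$, the concentration of $\nu_{Tp}$ in $B$ passes to $\nu_{Tf}$. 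The maximality fact applied inside $Q\Mcal Q$ then yields $f\le P^{(Q)}(TQ,B)=r_0$. Hence $r_0=f=P(T,B)\wedge Q$.

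For the second identity I would apply the first one to $T^*$ and the projection $1-Q$, which is $T^*$-invariant because $(1-Q)TQ=0$ gives $QT^*(1-Q)=0$; this yields $P^{(1-Q)}(T^*(1-Q),\Cpx\setminus B^*)=P(T^*,\Cpx\setminus B^*)\wedge(1-Q)=(1-p)\wedge(1-Q)$, where I used the identity $P(T^*,C)=1-P(T,\Cpx\setminus C^*)$ from Lemma~\ref{lem:basicHS} to rewrite $P(T^*,\Cpx\setminus B^*)$ as $1-P(T,B)=1-p$. On the other hand, inside $(1-Q)\Mcal(1-Q)$ the adjoint of $(1-Q)T$ is $T^*(1-Q)$, so the same identity from Lemma~\ref{lem:basicHS}, now applied within $(1-Q)\Mcal(1-Q)$ to $(1-Q)T$ and $\Cpx\setminus B^*$, gives $P^{(1-Q)}(T^*(1-Q),\Cpx\setminus B^*)=(1-Q)-P^{(1-Q)}((1-Q)T,B)$. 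Equating the two expressions and using $(1-p)\wedge(1-Q)=1-(p\vee Q)$, I get $P^{(1-Q)}((1-Q)T,B)=(1-Q)-\big(1-(p\vee Q)\big)=(p\vee Q)-Q$; the final equality $(p\vee Q)-Q=(p\vee Q)\wedge(1-Q)$ holds because $Q\le p\vee Q$.

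I expect the main obstacle to be the maximality fact: the local-spectral-subspace description of Haagerup and Schultz delivers it at once for \emph{closed} sets, but getting it for a general Borel set $B$ requires either the Borel version of their result or an approximation of $B$ by closed subsets together with Theorem~\ref{thm:HSprojLattice}. Once that is in hand, the remainder is routine manipulation of invariant subspaces, the characterizing properties in Theorem~\ref{thm:hsproj}, and the additivity of Brown measure.
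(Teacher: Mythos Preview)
The paper does not supply its own proof of this statement: it is quoted verbatim as Theorem~3.4 and Corollary~3.5 of \cite{CDSZ} (see the sentence immediately preceding Theorem~\ref{thm:Pcompress}). So there is no in-paper argument to compare yours against.

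Your strategy is correct and, in fact, close to how such results are typically proved. The derivation of the second identity from the first via adjoints is clean and fully rigorous: $1-Q$ is $T^*$-invariant, the corner adjoint of $(1-Q)T$ is $T^*(1-Q)$, and two applications of Lemma~\ref{lem:basicHS}\ref{it:PT*}---once in $\Mcal$ and once in $(1-Q)\Mcal(1-Q)$---together with the de~Morgan identity $(1-p)\wedge(1-Q)=1-(p\vee Q)$ give exactly the formula. For the first identity your two-inequality argument using Theorem~\ref{thm:brmeassplit} is valid, and you have correctly isolated the single substantive input: that $P(T,B)$ is the \emph{maximal} $T$-invariant projection whose compressed Brown measure is carried by $B$.

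One caveat about how you justify that input. You attribute it to the identification of $\operatorname{ran}P(T,B)$ with the closure of the local spectral subspace $\Hc_T(B)$, but in this paper that identification is recorded only for \emph{decomposable} $T$ (Proposition~\ref{prop:HSdecomp}, which is Proposition~9.2 of \cite{HS09}); for general $T\in\Mcal$ it is not available in that form. The maximality you need is nonetheless true for arbitrary $T$: it comes from the Haagerup--Schultz description of $P(T,r\overline{\mathbb D})\Hc$ as the set of vectors $\xi$ admitting approximants $\xi_n\to\xi$ with $\limsup_n\|T^n\xi_n\|^{1/n}\le r$, combined with Theorem~\ref{thm:hsconv} applied inside $e\Mcal e$ to see that any $T$-invariant $e$ with $\nu_{Te}$ carried by $r\overline{\mathbb D}$ has $e\Hc$ contained in that set; translation handles general closed disks, and Theorem~\ref{thm:HSprojLattice} extends to Borel $B$, exactly as you anticipate in your final paragraph. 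Alternatively, one may simply cite \cite{CDSZ} for the maximality, since that is where the present paper draws the whole theorem from.
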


In Theorem 8.1 of~\cite{HS09}, Haagerup and Schultz also prove the following theorem.
\begin{thmsubs}\label{thm:hsconv}
Let $T\in\Mcal$.
Then the sequence $|T^n|^{1/n}$ has a strong operator limit $A$, and 
for every $r\ge0$, the spectral projection of $A$ associated with the interval $[0,r]$ is $P(T,r\overline{\mathbb{D}})$.
\end{thmsubs}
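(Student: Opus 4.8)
My plan is to exhibit the limit explicitly and then prove convergence to it. The candidate is the positive operator $A\in\Mcal$ with spectral resolution $E_A([0,r])=P(T,r\overline{\mathbb{D}})$, $r\ge0$. This is well defined: $r\mapsto P(T,r\overline{\mathbb{D}})$ is increasing by Theorem~\ref{thm:hsproj}, it is right-continuous and countably additive by Theorem~\ref{thm:HSprojLattice} (via $\bigwedge_{r'>r}P(T,r'\overline{\mathbb{D}})=P(T,r\overline{\mathbb{D}})$ and $\bigvee_rP(T,r\overline{\mathbb{D}})=P(T,\Cpx)=1$), and it equals $1$ once $r\ge\rho(T)$ since $\supp(\nu_T)\subseteq\sigma(T)$. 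Thus $A$ is bounded with $\|A\|=\max\{|z|:z\in\supp(\nu_T)\}$, $\tau(E_A([0,r]))=\nu_T(r\overline{\mathbb{D}})$ for every $r$, and $A$ is precisely the positive operator whose spectral distribution is the push-forward $m_T$ of $\nu_T$ under $z\mapsto|z|$. Since $\||T^n|^{1/n}\|=\|T^n\|^{1/n}\le\|T\|$ the sequence is uniformly bounded, so by a Riemann-sum argument it suffices to prove that the spectral projections $\chi_{[0,r]}(|T^n|^{1/n})=E_{|T^n|}([0,r^n])$ converge in the strong operator topology to $P(T,r\overline{\mathbb{D}})$ for every $r$ outside the (countable) set of atoms of $m_T$, and then let $r$ run over a dense set.

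The first ingredient is Brown's Weyl-type comparison (\S4 of~\cite{Br86}): for $S$ in a tracial von Neumann algebra, the singular value function $t\mapsto\mu_t(S)$ log-majorizes the quantile function of the modulus-push-forward of $\nu_S$, with equality at $t=1$ (the Fuglede--Kadison determinant), the refinement for $t<1$ being obtained by compressing with Haagerup--Schultz projections. Applied to $S=T^n$, using $\nu_{T^n}=(z\mapsto z^n)_*\nu_T$ (from multiplicativity of the determinant and the factorization of $z^n-\lambda$) and the multiplicative Horn inequality $\mu(XY)\prec_{\log}\mu(X)\mu(Y)$ (with $X=T^n$, $Y=T^m$, which makes $n\mapsto\int_0^s\log\mu_t(T^n)\,dt$ subadditive), this should give $\mu_t(T^n)^{1/n}\to\mu_t(A)$ for a.e.\ $t$; equivalently, $|T^n|^{1/n}\to A$ in spectral distribution, and in particular $\tau(E_{|T^n|}([0,r^n]))\to\nu_T(r\overline{\mathbb{D}})$ at continuity points $r$ of $m_T$.

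The second ingredient upgrades this trace-level statement to strong convergence of the projections and identifies the limit as $P(T,r\overline{\mathbb{D}})$ itself, and not merely as some projection of the same trace; this uses the $T$-invariance -- indeed the hyperinvariance -- of the Haagerup--Schultz projections, since singular value data cannot by itself locate invariant subspaces. Fix a continuity point $r$ of $m_T$ and put $p=P(T,r\overline{\mathbb{D}})$. As $p$ is $T$-invariant we have $T^np=(Tp)^n$ and $p\,|T^n|^2\,p=|(Tp)^n|^2$ inside the corner $p\Mcal p$, where $\nu_{Tp}$ is concentrated in $r\overline{\mathbb{D}}$ (Theorem~\ref{thm:hsproj}); the operator Jensen inequality for $x\mapsto x^{1/(2n)}$ gives $p\,|T^n|^{1/n}\,p\le|(Tp)^n|^{1/n}$, and running the first ingredient inside $p\Mcal p$ shows that for each $\eps>0$ the projection $\chi_{(r+\eps,\infty)}(|(Tp)^n|^{1/n})$ has trace tending to $0$, hence tends to $0$ strongly, whence $\limsup_n\langle|T^n|^{1/n}\xi,\xi\rangle\le r+\eps$ for every unit $\xi\in p\Hc$. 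Dually, $1-p$ is $T^*$-invariant and $1-p=P(T^*,\Cpx\setminus r\overline{\mathbb{D}})$ (Lemma~\ref{lem:basicHS}\ref{iv}), and $(1-p)T$, viewed in $(1-p)\Mcal(1-p)$, has Brown measure concentrated in $\Cpx\setminus r\overline{\mathbb{D}}$; one needs the matching lower bound $\liminf_n\langle|T^n|^{1/n}\eta,\eta\rangle\ge r-\eps$ for unit $\eta\in(1-p)\Hc$, which I would obtain by passing to the inverse (or a regularized inverse) of $(1-p)T$ -- whose Brown measure then lies inside a disk -- and applying the upper estimate there. Combining the two bounds over a dense set of continuity points $r$, together with $P(T,r'\overline{\mathbb{D}})\to p$ strongly as $r'\to r$ (Theorem~\ref{thm:HSprojLattice}), squeezes $E_{|T^n|}([0,r^n])$ to $P(T,r\overline{\mathbb{D}})$ strongly, and hence $|T^n|^{1/n}\to A$.

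The step I expect to be the main obstacle is the ``reverse'' of Brown's inequality for $t<1$ that underlies the lower bound on the outer compression $(1-p)|T^n|^{1/n}(1-p)$ -- equivalently, the assertion that $\mu_t(T^n)^{1/n}$ does not accumulate below $\mu_t(A)$. Here the Brown measure of $(1-p)T$, being supported in but in general not equal to the spectrum, gives no control on the spectral radius of $(1-p)T$, so one cannot simply invert; handling this, and more generally passing from the distributional convergence to the strong-operator convergence of projections, is precisely where the Haagerup--Schultz machinery has to be used essentially -- the invariance of $P(T,r\overline{\mathbb{D}})$, the compression identities of Theorem~\ref{thm:Pcompress} that identify the Haagerup--Schultz projections of the corners $p\Mcal p$ and $(1-p)\Mcal(1-p)$ with the expected restrictions, and limiting arguments over finer and finer radial partitions of the spectrum.
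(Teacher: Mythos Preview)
The paper does not prove this theorem; it merely quotes it as Theorem~8.1 of Haagerup and Schultz~\cite{HS09}, so there is no ``paper's own proof'' to compare against. Your attempt is therefore an attempt at an independent proof of a deep result that the authors deliberately import as a black box.

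As a sketch, your outline is reasonable in spirit but not a proof, and you correctly locate the gap yourself. The upper bound on $p\,|T^n|^{1/n}\,p$ for $p=P(T,r\overline{\mathbb{D}})$ goes through essentially as you say (this is close to how Haagerup and Schultz handle one direction). The lower bound on the $(1-p)$ corner is the entire difficulty: your proposed device of ``passing to the inverse (or a regularized inverse) of $(1-p)T$'' does not work, because $(1-p)T$ need not be invertible in $(1-p)\Mcal(1-p)$ and its Brown measure being supported off $r\overline{\mathbb{D}}$ gives no spectral-radius information. In~\cite{HS09} this step is not obtained by inversion at all; it requires a separate and substantial argument showing, roughly, that for vectors in the range of $1-P(T,r\overline{\mathbb{D}})$ the growth of $\|T^n\xi\|$ is at least $r^n$ up to subexponential factors, which is proved using the explicit construction of the Haagerup--Schultz projections as limits of spectral projections of $|T^n|$ rather than from their abstract properties. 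Your appeal to Theorems~\ref{thm:HSprojLattice} and~\ref{thm:Pcompress} is also delicate from a logical standpoint: those results from~\cite{CDSZ} postdate~\cite{HS09} and in places rely on the very convergence theorem you are trying to prove, so invoking them here risks circularity.

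In short: the statement is quoted, not proved, in this paper; and your proposal, while oriented correctly, leaves open exactly the step that makes the Haagerup--Schultz theorem hard.
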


If,
for an element $T\in B(\mathcal{H})$,
the sequence $|T^n|^{1/n}$ converges to $0$ in the strong operator topology
as $n\to\infty$, then $T$ is called {\em s.o.t.-quasinilpotent.}
From the above theorem, if follows that, when $T\in\Mcal$,
the property of being s.o.t.-quasinilpotent is independent of the representation of $\Mcal$ on Hilbert space and, moreover,
$T$ is s.o.t.-quasinilpotent if and only if the Brown measure of $T$ is concentrated at $0$.
In other words,
s.o.t.-quasinilpotent operators are spectrally trivial with respect to Brown measure.

\subsection{Upper-triangular forms in tracial von Neumann algebras}
\label{subsec:UT}

The following classical theorem of Schur allows for the upper-triangular forms of square matrices.
\begin{thmsubs}\label{thm:Schur}
Let $A \in M_n(\mathbb{C})$ and let $a_1,a_2,\ldots,a_n$ be the eigenvalues of $A$, listed according to algebraic multiplicity
and in any order.
Then there exists a unitary matrix $U \in M_n(\mathbb{C})$ such that $U^*AU$ is an upper-triangular matrix and $[U^*AU]_{ii}=a_i$,
for every $i\in\{1,\ldots,n\}$.
\end{thmsubs}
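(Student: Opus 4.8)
The plan is to prove this by induction on $n$, peeling off the eigenvalue $a_1$ at each stage. The base case $n=1$ is immediate: every $1\times1$ matrix is already upper triangular and we may take $U$ to be the identity.

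For the inductive step, assume the result for $(n-1)\times(n-1)$ matrices, and let $A\in M_n(\mathbb{C})$ have eigenvalues $a_1,\ldots,a_n$ listed with algebraic multiplicity in the prescribed order. First I would choose a unit eigenvector $v_1\in\mathbb{C}^n$ with $Av_1=a_1v_1$; such a vector exists because $a_1$ is a root of the characteristic polynomial of $A$. Extend $v_1$ to an orthonormal basis $v_1,w_2,\ldots,w_n$ of $\mathbb{C}^n$ via Gram--Schmidt, and let $V$ be the unitary matrix with these vectors as columns. Since $Av_1=a_1v_1$, the first column of $V^*AV$ is $a_1e_1$, so $V^*AV$ has the block form $\left(\begin{smallmatrix} a_1 & b^* \\ 0 & A' \end{smallmatrix}\right)$ for some $b\in\mathbb{C}^{n-1}$ and some $A'\in M_{n-1}(\mathbb{C})$.

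Next I would compare characteristic polynomials. Since $V$ is unitary, $\det(zI_n-V^*AV)=\det(zI_n-A)=\prod_{i=1}^n(z-a_i)$, while the block-triangular form gives $\det(zI_n-V^*AV)=(z-a_1)\det(zI_{n-1}-A')$. Cancelling the factor $(z-a_1)$ shows that the characteristic polynomial of $A'$ is $\prod_{i=2}^n(z-a_i)$, so the eigenvalues of $A'$, listed with algebraic multiplicity, are $a_2,\ldots,a_n$. Applying the inductive hypothesis to $A'$ with this ordering yields a unitary $U'\in M_{n-1}(\mathbb{C})$ such that $(U')^*A'U'$ is upper triangular with $[(U')^*A'U']_{ii}=a_{i+1}$ for $i\in\{1,\ldots,n-1\}$. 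Setting $U=V\left(\begin{smallmatrix} 1 & 0 \\ 0 & U' \end{smallmatrix}\right)$, which is a product of unitaries and hence unitary, a direct computation gives $U^*AU=\left(\begin{smallmatrix} a_1 & b^*U' \\ 0 & (U')^*A'U' \end{smallmatrix}\right)$, which is upper triangular with diagonal entries $a_1,a_2,\ldots,a_n$ in order, completing the induction.

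There is no serious obstacle here — this is the classical argument — but the one point requiring care is the bookkeeping with algebraic multiplicities: one must verify through the characteristic polynomial that precisely the eigenvalues other than a single copy of $a_1$ descend to the submatrix $A'$, so that the induction respects the prescribed ordering. Everything else is routine linear algebra.
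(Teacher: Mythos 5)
The paper states Theorem~\ref{thm:Schur} as a classical result without giving a proof, so there is no paper argument to compare against. Your inductive proof is the standard one and is correct: peeling off a single eigenvector for $a_1$, conjugating to a block upper-triangular form, using the factorization of the characteristic polynomial to identify the spectrum (with multiplicity) of the $(n-1)\times(n-1)$ block, and applying the inductive hypothesis. The point you flag --- that the multiplicity bookkeeping goes through the characteristic polynomial --- is exactly the one detail that needs care, and you handle it properly.
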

We can then decompose the matrix $A$ from Theorem~\ref{thm:Schur} as $A=N+Q$, with $N$ normal, $Q$ nilpotent, and the Brown measure of $N$ identical to the Brown measure of $A$, by letting $\hat{N}$ be the diagonal matrix with diagonal matching that of $U^*AU$, and setting $N=U\hat{N}U^*$.

This decomposition was generalized to tracial von Neumann algebras for continuous spectral orderings in Theorem 6 of~\cite{DSZ}
and later, for more general spectral orderings, in Theorem 3 of \cite{Nol} to give the following theorem.
\begin{thmsubs}\label{thm:utdecomp}
Let $T\in \Mcal$.
Let $\psi:[0,1]\to \mathbb{C}$ be a Borel measurable function such that $\psi ([0,t])$ is Borel for every $t\in [0,1]$, and the set $\{z\in\mathbb{C}\mid \psi^{-1}(z)$ has a minimum$\}$ is a full measure Borel set with respect to $\nu_T$.  Then there exists a spectral measure $E$ satisfying 
\begin{enumerate}[label=(\roman*),leftmargin=30pt]
\item $E(\psi([0,t])) = P(T,\psi([0,t]))$, and hence $T$ and $N=\int_\mathbb{C} z\,dE(z)$ have the same Brown measure, and
\item $Q=T-N$ is s.o.t.-quasinilpotent.
\end{enumerate}
\end{thmsubs}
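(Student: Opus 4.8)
The plan is to take $N$ to be the $\tau$-preserving conditional expectation of $T$ onto a suitable abelian subalgebra, and to deduce all three assertions from a single Fuglede--Kadison determinant identity. For $t\in[0,1]$ set $p_t=P(T,\psi([0,t]))$; this is legitimate since $\psi([0,t])$ is Borel by hypothesis, and since $s\le t$ implies $\psi([0,s])\subseteq\psi([0,t])$, Theorem~\ref{thm:hsproj} shows the $p_t$ form an increasing family of projections. I will use two consequences of the hypothesis on $\psi$. First, any $z\in\bigcap_{s>t}\psi([0,s])\setminus\psi([0,t])$ has $\psi^{-1}(z)$ without a minimum, so such $z$ form a $\nu_T$-null set; combined with Theorem~\ref{thm:HSprojLattice} this gives $\tau\big(\bigwedge_{s>t}p_s\big)=\nu_T\big(\bigcap_{s>t}\psi([0,s])\big)=\nu_T(\psi([0,t]))=\tau(p_t)$, so by faithfulness of $\tau$ the family $t\mapsto p_t$ is right-continuous. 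Second, and similarly, $\nu_T(\psi([0,1]))=1$, so $p_1=1$. Let $\Dc$ be the abelian von Neumann subalgebra of $\Mcal$ generated by $\{p_t:t\in[0,1]\}$, let $\mathbb{E}_\Dc:\Mcal\to\Dc$ be the $\tau$-preserving conditional expectation, put $N=\mathbb{E}_\Dc(T)$ and $Q=T-N$, and let $E$ be the spectral measure of the normal operator $N$. Call $X\in\Mcal$ \emph{upper-triangular} if $Xp_t=p_tXp_t$ for every $t$ (equivalently, each $\mathrm{ran}\,p_t$ is $X$-invariant); by Theorem~\ref{thm:hsproj}(i) and since $N$ commutes with each $p_t$, the operators $T$, $T-\lambda1$, $N$, and $Q-\lambda1$ are all upper-triangular, and $\mathbb{E}_\Dc(Q-\lambda1)=-\lambda1$ (using $p_1=1$).

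The heart of the matter is the identity $\Delta(X)=\Delta(\mathbb{E}_\Dc(X))$ for every upper-triangular $X\in\Mcal$, where $\Delta$ is the Fuglede--Kadison determinant, $\log\Delta(A)=\int_{[0,\infty)}\log x\,d\mu_{|A|}(x)$. I would prove it by approximation. Given a finite partition $0=t_0<\cdots<t_n=1$, set $e_i=p_{t_i}-p_{t_{i-1}}$; upper-triangularity gives $e_iXe_j=0$ for $i>j$, so multiplicativity of $\Delta$ over the resulting block-triangular form yields $\log\Delta(X)=\sum_i\tau(e_i)\log\Delta^{(e_i)}(e_iXe_i)$, the determinants on the right being taken with respect to the normalized traces on the corners $e_i\Mcal e_i$. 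Iterating this inside each $e_i\Mcal e_i$ along a refining sequence of partitions, and using the noncommutative martingale convergence $\mathbb{E}_{\Dc_n}(X)\to\mathbb{E}_\Dc(X)$ for the increasing finite-dimensional subalgebras $\Dc_n\uparrow\Dc$ (whose union is dense in $\Dc$ by the right-continuity established above), one passes to the limit. I expect this passage to the limit to be the main obstacle: $\Delta$ is only upper semicontinuous, so the Fuglede--Kadison determinant must be controlled carefully along the approximating net; this is the technical core of the theorem (Theorem~6 of~\cite{DSZ}, Theorem~3 of~\cite{Nol}).

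Granting the identity, the rest is bookkeeping. Applying it to $T-\lambda1$ gives $\Delta(T-\lambda1)=\Delta(N-\lambda1)$ for all $\lambda\in\Cpx$, so $T$ and $N$ have the same logarithmic potential and hence $\nu_T=\nu_N$ by the uniqueness in Theorem~\ref{thm:brnmeas}. Running the same argument inside $p_t\Mcal p_t$ --- in which $Tp_t$ is upper-triangular with respect to $(p_s)_{s\le t}$, these generate $\Dc p_t$, and $Np_t=\mathbb{E}_{\Dc p_t}(Tp_t)$ --- yields $\nu^{(p_t)}_{Np_t}=\nu^{(p_t)}_{Tp_t}$, which by Theorem~\ref{thm:hsproj}(iii) is concentrated on $\psi([0,t])$. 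Since $Np_t$ is normal, its spectral measure equals its Brown measure; as the latter is concentrated on $\psi([0,t])$ itself, the spectral projection $E(\psi([0,t]))$ dominates $p_t$, and since $\tau(E(\psi([0,t])))=\nu_N(\psi([0,t]))=\nu_T(\psi([0,t]))=\tau(p_t)$, faithfulness of $\tau$ forces $E(\psi([0,t]))=p_t=P(T,\psi([0,t]))$, which is~(i). Finally, applying the identity to $Q-\lambda1$ and using $\mathbb{E}_\Dc(Q-\lambda1)=-\lambda1$ gives $\Delta(Q-\lambda1)=\Delta(-\lambda1)=|\lambda|$ for all $\lambda\in\Cpx$, so the logarithmic potential of $\nu_Q$ is $\lambda\mapsto\log|\lambda|$, whence $\nu_Q=\delta_0$ by Theorem~\ref{thm:brnmeas}; by the characterisation of s.o.t.-quasinilpotence recorded after Theorem~\ref{thm:hsconv}, this says exactly that $Q$ is s.o.t.-quasinilpotent, which is~(ii).
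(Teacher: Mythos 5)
Your construction of $N$ exactly matches the one the paper describes (the $\tau$-preserving conditional expectation of $T$ onto the abelian von Neumann algebra $\Dc$ generated by the projections $P(T,\psi([0,t]))$), and your reduction of the three conclusions to a single Fuglede--Kadison determinant identity $\Delta(X)=\Delta(\mathbb{E}_\Dc(X))$ for upper-triangular $X$ is indeed how the cited sources~\cite{DSZ} and~\cite{Nol} proceed; the paper itself only reviews the construction and defers the proof to those references. Your preliminary observations (right-continuity of $t\mapsto p_t$ via Theorem~\ref{thm:HSprojLattice} and the ``minimum'' hypothesis, $p_1=1$, and the fact that $T$, $N$, and $Q-\lambda 1$ are all upper-triangular with $\mathbb{E}_\Dc(Q-\lambda1)=-\lambda1$) are correct, and the derivations of~(i) and~(ii) from the determinant identity are sound, including the step identifying $E(\psi([0,t]))$ with $p_t$ by the trace-equality-plus-faithfulness argument.

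The one substantive gap is the passage to the limit in the determinant identity, and you correctly label it as such --- but I want to flag that your proposed route is less straightforward than it may look. The finite-partition formula $\log\Delta(X)=\sum_i\tau(e_i)\log\Delta^{(e_i)}(e_iXe_i)$ is \emph{exact} and therefore invariant under refinement: both sides equal $\log\Delta(X)$ for every partition, so iterating it produces no new information in the limit. What actually has to be shown is that $\sum_i\tau(e_i)\bigl[\log\Delta^{(e_i)}(e_iXe_i)-\log\Delta^{(e_i)}(e_i\mathbb{E}_\Dc(X)e_i)\bigr]\to0$ as the mesh shrinks (the second sum being $\log\Delta(\mathbb{E}_\Dc(X))$ by the same exact formula applied to $\mathbb{E}_\Dc(X)$). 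This is a genuine two-sided comparison of Fuglede--Kadison determinants of nearby operators, which upper semicontinuity alone does not supply, and martingale convergence of $\mathbb{E}_{\Dc_n}(X)\to\mathbb{E}_\Dc(X)$ does not obviously control $\log\Delta$ across the approximation. You are right that this is the technical heart of Theorem~6 of~\cite{DSZ} and Theorem~3 of~\cite{Nol}; just be aware that the iteration-plus-semicontinuity sketch as written does not close the gap, and the cited proofs have to work harder (roughly, controlling the determinant through its relation to Brown measure and using regularity properties of the logarithmic potential) to get the two-sided estimate.
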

Many of the results below apply to decompositions of the form $T=N+Q$, where $N=\int_\mathbb{C} zdE(z)$ for a spectral measure $E$ as described in Theorem \ref{thm:utdecomp}.  The term {\em upper-triangular form} will refer to an expression arising in this fashion.
All of our results also require that the function $\psi$ used to generate the spectral measure $E$ be a continuous function; we call this
a {\em continuous spectral ordering}.
Note that for a function $\psi$ to fulfill the requirements for a continuous spectral ordering, it need only be continuous and have range that includes the support of
the Brown measure $\nu_T$.
Thus, the name may be slightly misleading, because the range of $\psi$ need not include the whole spectrum of $T$.

Let us briefly review some facts about the proof of the version of Theorem~\ref{thm:utdecomp} found in~\cite{DSZ}, in the case of
a continuous spectral ordering $\psi$.
Here $\psi$ is any continuous function from $[0,1]$ into the complex plane, whose image contains the support of $\nu_T$.
Letting $\Dc$ be the commutative von Neumann algebra generated by the set of projections
$\{P(T,\psi([0,t]))\mid0\le t\le 1\}$, $N$ is the conditional expectation onto $\Dc$.

Upper-triangular matrices of operators are compatible with spectral theory and Brown measure as described by the following two results.
Though the first is well known, we include a proof for convenience.
\begin{lemsubs}\label{lem:utspec}
Let $T\in\mathcal{M}$, and let $p$ be a $T$-invariant projection with $p \notin \{0,1\}$.
Then $T$ is invertible if and only if $Tp$ and $(1-p)T$ are invertible
in the algebras $p\mathcal{M}p$ and $(1-p)\mathcal{M}(1-p)$, respectively.
It follows that $\sigma(T) = \sigma(Tp)\cup \sigma((1-p)T)$, where the spectra of $Tp$ and $(1-p)T$ are 
for these operators considered as elements of the algebras $p\mathcal{M}p$ and $(1-p)\mathcal{M}(1-p)$, respectively.
\end{lemsubs}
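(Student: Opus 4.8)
The plan is to exploit the block upper-triangular structure of $T$ coming from the $T$-invariant projection $p$. Since $Tp=pTp$, the corner $(1-p)Tp$ vanishes, so with respect to $\mathcal{H}=p\mathcal{H}\oplus(1-p)\mathcal{H}$ the element $T$ has the form $\bigl(\begin{smallmatrix}A&B\\0&D\end{smallmatrix}\bigr)$, where $A=Tp=pTp\in p\mathcal{M}p$, where $D=(1-p)T=(1-p)T(1-p)\in(1-p)\mathcal{M}(1-p)$, and where $B=pT(1-p)$ satisfies $B=pB(1-p)$. The spectral identity at the end will then follow by applying the invertibility equivalence to $T-\lambda$ for each $\lambda\in\mathbb{C}$: the projection $p$ is automatically $(T-\lambda)$-invariant, $(T-\lambda)p=Tp-\lambda p$ is the translate by $\lambda$ of $A$ inside $p\mathcal{M}p$, and similarly $(1-p)(T-\lambda)$ is the translate by $\lambda$ of $D$ inside $(1-p)\mathcal{M}(1-p)$; hence $\lambda\notin\sigma(T)$ if and only if $\lambda\notin\sigma(Tp)$ and $\lambda\notin\sigma((1-p)T)$, which rearranges to $\sigma(T)=\sigma(Tp)\cup\sigma((1-p)T)$.

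For the \emph{if} direction I would simply exhibit the inverse: assuming $A$ is invertible in $p\mathcal{M}p$ and $D$ is invertible in $(1-p)\mathcal{M}(1-p)$, a direct computation — using only $p(1-p)=0$ and the support conditions $A=pAp$, $B=pB(1-p)$, $D=(1-p)D(1-p)$ — shows that $S=A^{-1}-A^{-1}BD^{-1}+D^{-1}$ satisfies $TS=ST=1$ in $\mathcal{M}$. This is just the familiar formula for the inverse of a $2\times2$ triangular matrix and requires no finiteness.

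For the \emph{only if} direction, suppose $T$ is invertible, so that $T$ and $T^*$ are both bounded below, say $\|T\xi\|\ge c\|\xi\|$ and $\|T^*\eta\|\ge c'\|\eta\|$ for all $\xi,\eta$ and some $c,c'>0$. For $\xi\in p\mathcal{H}$ one has $T\xi=Tp\xi=A\xi$, so $\|A\xi\|\ge c\|\xi\|$, i.e.\ $A^*A\ge c^2p$ in $p\mathcal{M}p$. Dually, $(1-p)$ is $T^*$-invariant because $(1-p)Tp=0$ is equivalent to $pT^*(1-p)=0$; then for $\eta\in(1-p)\mathcal{H}$ one gets $D^*\eta=(1-p)T^*\eta=T^*\eta$, so $DD^*\ge(c')^2(1-p)$ in $(1-p)\mathcal{M}(1-p)$. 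The main obstacle — and the only point at which the tracial (finite) hypothesis on $\mathcal{M}$ is used — is passing from \emph{bounded below} to \emph{invertible} in these corners, which fails for general operators on Hilbert space. Here I would first note that $p\mathcal{M}p$ and $(1-p)\mathcal{M}(1-p)$ are again tracial von Neumann algebras (restrict and normalize $\tau$; this is where $p\notin\{0,1\}$ is needed so that both corners are nonzero), and then invoke the standard fact that in a finite von Neumann algebra an element $a$ with $a^*a$ invertible is itself invertible: its polar decomposition $a=v|a|$ has $v^*v=1$, so $v$ is an isometry and hence a unitary, since a subprojection of $1$ that is Murray--von Neumann equivalent to $1$ must equal $1$ by faithfulness of the trace, and $|a|$ is invertible, whence $a=v|a|$ is invertible. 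Applying this to $A$ in $p\mathcal{M}p$ and to $D^*$ in $(1-p)\mathcal{M}(1-p)$ gives invertibility of $Tp$ and of $(1-p)T$, completing the argument.
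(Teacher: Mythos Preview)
Your proof is correct and follows the same overall structure as the paper: both use the $2\times2$ upper-triangular matrix picture, exhibit the explicit block-triangular inverse for the ``if'' direction, and deduce the spectral identity by applying the invertibility equivalence to $T-\lambda$.

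The only genuine difference is in the ``only if'' direction. You argue analytically: invertibility of $T$ gives lower bounds $\|A\xi\|\ge c\|\xi\|$ and $\|D^*\eta\|\ge c'\|\eta\|$, hence $A^*A$ and $DD^*$ are invertible in their corners, and then you invoke finiteness via polar decomposition (isometries are unitaries). The paper instead argues algebraically and a bit more directly: from $Tp=pTp$ one checks $(pT^{-1}p)(Tp)=p$ and $((1-p)T)((1-p)T^{-1}(1-p))=1-p$, so $Tp$ has a left inverse and $(1-p)T$ has a right inverse, and finiteness of the corners upgrades one-sided inverses to two-sided ones. Both routes use finiteness in exactly the same essential way (a one-sided condition becomes two-sided); the paper's version just avoids the detour through norm estimates and polar decomposition.
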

\begin{proof}
If $T$ is invertible, then 
$$(pT^{-1}p)(Tp) = p$$
and
$$((1-p)T)((1-p)T^{-1}(1-p)) = (1-p).$$
Hence $Tp$ has a left inverse, and since $p\mathcal{M}p$ is finite, it follows that $Tp$ is invertible.
Additionally, $(1-p)T$ has a right inverse, and must be invertible.

If $Tp$ and $(1-p)T$ are both invertible, we may write $T$ in the form of a matrix 
$$\left(\begin{matrix}
Tp & pT(1-p)\\
0 & (1-p)T
\end{matrix}\right)$$
which has as an inverse 
$$\left(\begin{matrix}
(Tp)^{-1} & -(Tp)^{-1}pT(1-p)((1-p)T)^{-1} \\
0 & ((1-p)T)^{-1}
\end{matrix}\right).$$

Note that, since $p$ is $T$-invariant, it is also $(T-\lambda)$-invariant for every complex number $\lambda$.
Thus $T-\lambda$ is invertible if and only if both $(T-\lambda)p$ and $(1-p)(T-\lambda)$ are both invertible.
Thus $\sigma(T) = \sigma(Tp)\cup \sigma((1-p)T)$.
\end{proof}

The following result is stated in Proposition 10 of~\cite{DSZ}, and is a consequence of Theorem 2.24 of~\cite{HS07}.
\begin{thmsubs}\label{thm:brmeassplit}
Let $T\in\mathcal{M}$ and let $p$ be a $T$-invariant projection.  Then 
$$\nu_T = \tau(p)\nu_{Tp} + \tau(1-p)\nu_{(1-p)T},$$
where $\nu_S$ denotes the Brown measure of $S$, and $Tp$ and $(1-p)T$ are considered as elements of $p\Mcal p$ and $(1-p)\Mcal (1-p)$, respectively.
\end{thmsubs}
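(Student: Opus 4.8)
The plan is to bypass Fuglede--Kadison determinants entirely and instead reduce the statement to a trace identity for Haagerup--Schultz projections, using the characterization $\tau(P(T,B))=\nu_T(B)$ from Theorem~\ref{thm:hsproj} together with the compression formulas of Theorem~\ref{thm:Pcompress}. First I would dispose of the degenerate cases $p=0$ and $p=1$, where the claimed identity reads $\nu_T=\nu_{(1-p)T}$, resp.\ $\nu_T=\nu_{Tp}$, so there is nothing to prove; thus assume $0\ne p\ne1$. It then suffices to show that the two Borel probability measures $\nu_T$ and $\tau(p)\nu_{Tp}+\tau(1-p)\nu_{(1-p)T}$ assign the same value to an arbitrary Borel set $B\subseteq\Cpx$ (the second is a probability measure since $\tau(p)+\tau(1-p)=1$).

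So fix a Borel set $B$. Since $p$ is $T$-invariant, Theorem~\ref{thm:Pcompress} applies with $Q=p$ and yields
$$P^{(p)}(Tp,B)=P(T,B)\wedge p,\qquad P^{(1-p)}((1-p)T,B)=\big(p\vee P(T,B)\big)-p,$$
where the superscripts denote Haagerup--Schultz projections computed in $p\Mcal p$ and $(1-p)\Mcal(1-p)$, respectively. Applying property (ii) of Theorem~\ref{thm:hsproj} inside $p\Mcal p$ with its normalized trace $\tau(p)^{-1}\tau(\cdot)$, and likewise inside $(1-p)\Mcal(1-p)$, I obtain
$$\tau(p)\,\nu_{Tp}(B)=\tau\big(P(T,B)\wedge p\big),\qquad \tau(1-p)\,\nu_{(1-p)T}(B)=\tau\big(p\vee P(T,B)\big)-\tau(p).$$
Adding these two identities and invoking the parallelogram law for traces of projections in a finite von Neumann algebra, $\tau(e\wedge f)+\tau(e\vee f)=\tau(e)+\tau(f)$ with $e=P(T,B)$ and $f=p$, the two occurrences of $\tau(p)$ cancel and the right-hand side collapses to $\tau(P(T,B))$, which equals $\nu_T(B)$ by Theorem~\ref{thm:hsproj}(ii). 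As $B$ was arbitrary, this establishes the identity of measures.

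I do not expect a genuinely hard step here: the whole content is the observation that Theorems~\ref{thm:hsproj} and~\ref{thm:Pcompress} dovetail through the parallelogram identity, and the only care needed is bookkeeping --- correctly renormalizing the trace when passing to the corners $p\Mcal p$ and $(1-p)\Mcal(1-p)$, and treating $p\in\{0,1\}$ separately. For comparison, the route suggested by the cited Theorem~2.24 of~\cite{HS07} proceeds via the defining relation of the Brown measure in Theorem~\ref{thm:brnmeas}, reducing the statement to multiplicativity of the Fuglede--Kadison determinant over the triangular decomposition $T-\lambda=\left(\begin{smallmatrix}(T-\lambda)p & pT(1-p)\\ 0 & (1-p)(T-\lambda)\end{smallmatrix}\right)$; for invertible $T-\lambda$ this follows by factoring out a unipotent (whose determinant is $1$, by a scaling--conjugation argument, using that the off-diagonal corner squares to zero) and splitting the trace of the logarithm of the block-diagonal part. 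On that route the real difficulty is extending the determinant identity from $\lambda\notin\sigma(T)$ to $\lambda\in\sigma(T)$ --- equality of logarithmic potentials off the spectrum is by itself not enough to force equality of the measures --- and this is precisely the point where an approximation argument is required and which the Haagerup--Schultz-projection proof above avoids.
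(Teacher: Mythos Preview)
Your derivation is formally correct: given Theorems~\ref{thm:hsproj} and~\ref{thm:Pcompress} as stated, the Kaplansky parallelogram identity $\tau(e\wedge f)+\tau(e\vee f)=\tau(e)+\tau(f)$ closes the loop exactly as you wrote, and the trace normalizations and degenerate cases are handled properly. The paper itself gives no argument here --- it simply cites Proposition~10 of~\cite{DSZ}, which rests on Theorem~2.24 of~\cite{HS07}; that route, as you correctly sketch, goes through multiplicativity of the Fuglede--Kadison determinant on block upper-triangular elements.

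The one point worth flagging is not a mathematical gap but a dependency issue. Theorem~\ref{thm:brmeassplit} is chronologically and logically \emph{upstream} of the tools you invoke: the Brown-measure splitting from~\cite{HS07} is one of the ingredients Haagerup and Schultz use in~\cite{HS09} to verify the defining properties of $P(T,B)$ in Theorem~\ref{thm:hsproj}, and the compression formulas of~\cite{CDSZ} (Theorem~\ref{thm:Pcompress}) sit on top of that. So while your argument is a clean and instructive derivation \emph{within} this paper's framework of black-boxed preliminaries, it is not an independent proof in the literature --- the results you call on ultimately rest on the very theorem being established. If the aim is a genuinely self-contained proof, the determinant route is the one that actually carries the weight (and the subtlety you identify, extending the determinant identity from $\lambda\notin\sigma(T)$ to all $\lambda$, is indeed where the work lies in~\cite{HS07}); if the aim is simply to record how the pieces of the theory interlock once they are all in hand, your argument does that elegantly.
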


We use Lemma~\ref{lem:utspec} and Theorem \ref{thm:brmeassplit} to give the following corollary.
\begin{corsubs}\label{cor:utquasi}
Let $T\in\Mcal$ and let $p\in\Mcal$ be a $T$-invariant projection.  Then the following two statements hold.
\begin{enumerate}[label=(\roman*),leftmargin=30pt]
\item $T$ is quasinilpotent if and only if $Tp$ and $(1-p)T$ are both quasinilpotent.
\item $T$ is s.o.t.-quasinilpotent if and only if $Tp$ and $(1-p)T$ are both s.o.t.-quasinilpotent.
\end{enumerate}
\end{corsubs}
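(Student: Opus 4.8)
The plan is to reduce each equivalence to a known characterization and then apply the splitting of the relevant object along the $T$-invariant projection $p$: for (i) the characterization of quasinilpotence by the spectrum together with Lemma~\ref{lem:utspec}, and for (ii) the characterization of s.o.t.-quasinilpotence by the Brown measure (as noted after Theorem~\ref{thm:hsconv}) together with Theorem~\ref{thm:brmeassplit}. First I would dispose of the degenerate cases $p=0$ and $p=1$, in which one of $Tp$, $(1-p)T$ lies in the zero algebra and the claim is immediate; so assume $p\notin\{0,1\}$. Since $p$ is $T$-invariant, $Tp=pTp$, hence $(1-p)Tp=0$ and $(1-p)T=(1-p)T(1-p)$ is a genuine element of the (unital, finite) von Neumann algebra $(1-p)\Mcal(1-p)$, while $Tp\in p\Mcal p$; thus it makes sense to speak of the spectra, quasinilpotence, Brown measures, and s.o.t.-quasinilpotence of $Tp$ and $(1-p)T$ relative to these compressed algebras.

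For (i), recall that an element of a unital complex Banach algebra is quasinilpotent exactly when its spectrum equals $\{0\}$, and that such spectra are always nonempty and compact. By Lemma~\ref{lem:utspec}, $\sigma(T)=\sigma(Tp)\cup\sigma((1-p)T)$, the latter two spectra computed in $p\Mcal p$ and $(1-p)\Mcal(1-p)$. Hence $\sigma(T)=\{0\}$ if and only if $\sigma(Tp)\subseteq\{0\}$ and $\sigma((1-p)T)\subseteq\{0\}$, which, by non-emptiness of these spectra, is precisely the statement that $Tp$ and $(1-p)T$ are both quasinilpotent.

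For (ii), recall from the discussion following Theorem~\ref{thm:hsconv} that an element $S$ of a tracial von Neumann algebra is s.o.t.-quasinilpotent if and only if its Brown measure is concentrated at $0$, i.e.\ $\nu_S(\{0\})=1$; in particular the notion is intrinsic to the compressed algebras, so it is meaningful for $Tp$ and $(1-p)T$. By Theorem~\ref{thm:brmeassplit}, $\nu_T=\tau(p)\nu_{Tp}+\tau(1-p)\nu_{(1-p)T}$, a convex combination with strictly positive weights $\tau(p),\tau(1-p)>0$. Therefore $\nu_T(\{0\})=1$ if and only if $\nu_{Tp}(\{0\})=1$ and $\nu_{(1-p)T}(\{0\})=1$, which says exactly that $T$ is s.o.t.-quasinilpotent if and only if $Tp$ and $(1-p)T$ are both s.o.t.-quasinilpotent.

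The argument is bookkeeping once Lemma~\ref{lem:utspec} and Theorem~\ref{thm:brmeassplit} are available; the only points needing care are the trivial cases $p\in\{0,1\}$ and the remark that, because $p$ is $T$-invariant, the corner operators $Tp$ and $(1-p)T$ genuinely sit in unital finite von Neumann algebras, so the intrinsic notions of quasinilpotence and s.o.t.-quasinilpotence referred to in the statement are well defined. I do not anticipate a substantive obstacle.
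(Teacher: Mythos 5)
Your proof is correct and follows exactly the route the paper indicates: the paper presents this corollary as a direct consequence of Lemma~\ref{lem:utspec} (for the quasinilpotent case, via the identity $\sigma(T)=\sigma(Tp)\cup\sigma((1-p)T)$) and Theorem~\ref{thm:brmeassplit} (for the s.o.t.-quasinilpotent case, via the Brown-measure characterization and the convex splitting $\nu_T=\tau(p)\nu_{Tp}+\tau(1-p)\nu_{(1-p)T}$). Your handling of the degenerate cases $p\in\{0,1\}$ and the observation that faithfulness of $\tau$ makes the weights strictly positive are the only points that need care, and you address both.
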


\subsection{Decomposability and Haagerup--Schultz projections}
\label{subsec:Decomp}

An operator $T$ on a Hilbert space ${\mathcal H}$ is said to be {\em decomposable} if,
for every pair $(U,V)$ of open sets in the complex plane
whose union is the whole complex plane, there are closed, $T$-invariant subspaces ${\mathcal H}'$ and ${\mathcal H}''$
such that the restrictions of $T$
to these have spectra contained in $U$ and $V$, respectively and such that ${\mathcal H}'+{\mathcal H}''={\mathcal H}$.

Given an operator $T$ on a Hilbert space $\mathcal{H}$, a {\it spectral capacity} for $T$ is a mapping $E$ from the collection of closed subsets of $\Cpx$ into the set of all closed $T$-invariant subspaces of $\mathcal{H}$ such that 
\begin{enumerate}[label=(\roman*),leftmargin=30pt]
\item $E(\emptyset)=\{0\}$ and $E(\Cpx)=\mathcal{H}$,
\item $E(\overline{U_1})+E(\overline{U_2})+...+E(\overline{U_n})=\mathcal{H}$ for every finite open cover $\{U_1,U_2,...,U_n\}$ of $\Cpx$,
\item $E\left(\bigcap_{k=1}^\infty F_k\right) = \bigwedge_{k=1}^\infty E(F_k)$ for every countable family $(F_k)_{k=1}^\infty$ of closed subsets of $\Cpx$,
\item letting $P_{E(F)}$ denote the projection with range $E(F)$,
for all closed $F\subseteq\Cpx$, we have $\sigma(TP_{E(F)})\subseteq F$, 
where the spectrum is calculated as an operator on $E(F)$.
\end{enumerate}

Note that, by convention, the operator on the space $\{0\}$ is taken to have empty spectrum.

The following, which is Proposition 1.2.23 of \cite{LN00}, gives conditions equivalent to decomposability for an operator.

\begin{propsubs}\label{decompeqcond}
Let T be a bounded operator on a Hilbert space
H. Then the following are equivalent:
\begin{enumerate}[label=(\roman*),leftmargin=30pt]
\item T is decomposable,
\item T has a spectral capacity,
\item\label{it:dec1-P} for every closed subset $F$ of $\Cpx$, $\mathcal{H}_T(F)$ is closed and 
$$\sigma((1-P_T(F))T)\subseteq \overline{\sigma(T)\setminus F}$$
where $\mathcal{H}_T(F)$ is the local spectral subspace of $T$ corresponding to $F$, $P_T(F)$ is the projection onto $\mathcal{H}_T(F)$,
and the spectrum of $(1-P_T(F))T$ is calculated as an operator on $(1-P_T(F))\mathcal{H}$.
\end{enumerate}
\end{propsubs}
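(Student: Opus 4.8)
The plan is to establish the cycle $(ii)\Rightarrow(i)\Rightarrow(iii)\Rightarrow(ii)$ using the standard local-spectral-theory apparatus. For $x\in\mathcal{H}$ write $\rho_T(x)$ for the local resolvent set (the $\lambda$ near which $(T-\mu)f(\mu)=x$ has an analytic $\mathcal{H}$-valued local solution $f$), $\sigma_T(x)=\Cpx\setminus\rho_T(x)$ for the local spectrum, and $\mathcal{H}_T(F)=\{x:\sigma_T(x)\subseteq F\}$ for the local spectral subspace, a $T$-hyperinvariant linear manifold that need not be closed a priori. A first observation is that each of $(i),(ii),(iii)$ forces the single-valued extension property (SVEP): e.g.\ if $(T-\mu)f(\mu)\equiv 0$ on an open disk, then choosing a smaller disk and splitting $\Cpx$ into a two-set open cover that separates its centre, decomposability forces $f\equiv 0$. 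With SVEP in force, $\mathcal{H}_T(F)$ is unambiguous, $\mathcal{H}_T(\emptyset)=\{0\}$ by a Liouville argument, $\mathcal{H}_T(\Cpx)=\mathcal{H}$, the identity $\mathcal{H}_T\big(\bigcap_k F_k\big)=\bigcap_k\mathcal{H}_T(F_k)$ is immediate, and $\sigma\big(T\restrict\mathcal{H}_T(F)\big)\subseteq F$ whenever $\mathcal{H}_T(F)$ is closed. So in all three statements the relevant subspace is forced to be $\mathcal{H}_T(F)$, and the genuine content is: (a) closedness of $\mathcal{H}_T(F)$; (b) the covering/surjectivity property; (c) the spectral estimate on the complementary corner.

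The implication $(ii)\Rightarrow(i)$ is the easy one. Given a spectral capacity $E$ and an open cover $\{U,V\}$ of $\Cpx$, the complements $\Cpx\setminus U$ and $\Cpx\setminus V$ are disjoint closed sets, so the shrinking lemma produces open $U_0,V_0$ with $\overline{U_0}\subseteq U$, $\overline{V_0}\subseteq V$, $U_0\cup V_0=\Cpx$. Put $\mathcal{H}'=E(\overline{U_0})$, $\mathcal{H}''=E(\overline{V_0})$; axiom (iv) of a spectral capacity gives $\sigma(T\restrict\mathcal{H}')\subseteq\overline{U_0}\subseteq U$ and $\sigma(T\restrict\mathcal{H}'')\subseteq\overline{V_0}\subseteq V$, and axiom (ii) gives $\mathcal{H}'+\mathcal{H}''=\mathcal{H}$. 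Hence $T$ is decomposable.

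For $(iii)\Rightarrow(ii)$ I would set $E(F)=\mathcal{H}_T(F)$; closedness and $T$-invariance are exactly what (iii) supplies, and the empty-set, full-space, intersection, and $\sigma(T\restrict E(F))\subseteq F$ axioms follow from the general facts above. The finite-cover axiom reduces by induction to a two-set open cover $\{U_1,U_2\}$, and there it follows from the lemma that $\mathcal{H}_T(F_1)+\mathcal{H}_T(F_2)=\mathcal{H}$ whenever $\operatorname{int}F_1\cup\operatorname{int}F_2\supseteq\sigma(T)$: feed a suitably fattened $F_1\subseteq U_1$ into (iii), so that the corner $(1-P_T(F_1))T$ has spectrum inside $\overline{\sigma(T)\setminus F_1}\subseteq F_2\subseteq U_2$, observe that $P_T(F_1)$ is $T$-invariant so $T$ is block-upper-triangular with respect to $\mathcal{H}_T(F_1)\oplus(1-P_T(F_1))\mathcal{H}$, and then lift the decomposition of the quotient $\mathcal{H}/\mathcal{H}_T(F_1)$ back through the quotient map together with a Riesz functional calculus argument on the corner $(1-P_T(F_1))\Mcal(1-P_T(F_1))$.

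Finally $(i)\Rightarrow(iii)$ is the main obstacle. From decomposability I would first derive Bishop's property $(\beta)$ -- if $f_n\in\mathcal{O}(U,\mathcal{H})$ and $(T-\mu)f_n(\mu)\to 0$ locally uniformly then $f_n\to 0$ locally uniformly -- by testing against the decomposition subspaces for appropriate covers; property $(\beta)$ yields closedness of $\mathcal{H}_T(F)$, settling (a). For the spectral estimate, fix closed $F$ and take open $U\supseteq F$ and open $V$ with $U\cup V=\Cpx$ and $\overline{V}\cap F=\emptyset$; decomposability produces closed $T$-invariant $\mathcal{H}',\mathcal{H}''$ with $\sigma(T\restrict\mathcal{H}')\subseteq U$, $\sigma(T\restrict\mathcal{H}'')\subseteq V$, $\mathcal{H}'+\mathcal{H}''=\mathcal{H}$, and one checks $\mathcal{H}'\subseteq\mathcal{H}_T(\overline U)$ and $\mathcal{H}''\subseteq\mathcal{H}_T(\overline V)\subseteq\mathcal{H}_T(\Cpx\setminus F)$; since $\mathcal{H}''$ then maps onto $\mathcal{H}/\mathcal{H}_T(F)$, the quotient operator -- unitarily equivalent to $(1-P_T(F))T$ on $(1-P_T(F))\mathcal{H}$ -- has spectrum in $\overline V$, and letting $U\downarrow F$ and $V\downarrow\overline{\sigma(T)\setminus F}$ gives $\sigma\big((1-P_T(F))T\big)\subseteq\overline{\sigma(T)\setminus F}$. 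The real difficulty here -- the crux of the whole proposition -- is that one must use \emph{both} halves of decomposability simultaneously: property $(\beta)$ for the closedness in (a), and the complementary decomposition property $(\delta)$ (surjectivity onto quotients) for the spectral bound in (c), with the vector-valued analytic-function estimates for $(\beta)$ carrying the analytic weight; the Hilbert-space (finite von Neumann algebra) framework enters only as a convenience, via orthogonal complements in place of arbitrary topological complements.
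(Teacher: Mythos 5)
The paper does not prove this proposition at all: it is quoted verbatim as Proposition~1.2.23 of Laursen--Neumann~\cite{LN00}, a textbook result cited for use. So there is no proof in the paper for your attempt to be measured against; what remains is to assess the attempt on its own merits, and there are real gaps.

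In your $(i)\Rightarrow(iii)$ argument, the crucial step ``since $\mathcal{H}''$ then maps onto $\mathcal{H}/\mathcal{H}_T(F)$'' does not follow from what you set up. From $\mathcal{H}=\mathcal{H}'+\mathcal{H}''$ you only get that the images of $\mathcal{H}'$ and $\mathcal{H}''$ under the quotient map $q\colon\mathcal{H}\to\mathcal{H}/\mathcal{H}_T(F)$ together span the quotient; for $q(\mathcal{H}'')$ alone to be onto you would need $\mathcal{H}'\subseteq\ker q=\mathcal{H}_T(F)$. But you only have $\mathcal{H}'\subseteq\mathcal{H}_T(\overline U)$, and since $F\subseteq U$ forces $\overline U$ to be strictly larger than $F$, there is no reason $\overline U\cap\sigma(T)\subseteq F$; indeed $\overline{\sigma(T)\setminus F}$ may meet $\partial F$, so you cannot shrink $U$ to fix this. (With only density of $q(\mathcal{H}'')$, the intertwining argument gives control of the surjectivity spectrum of the quotient operator, not the full spectrum, which is not enough.) The standard route here passes through the fact that a decomposable operator and its restrictions/quotients satisfy Bishop's property $(\beta)$ and its dual, together with the identity $\sigma_T(x)=\sigma_{T\restriction\mathcal{H}_T(F)}(x)$ for $x\in\mathcal{H}_T(F)$, and these need to be proved, not asserted.

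Your $(iii)\Rightarrow(ii)$ step is the genuine heart of the proposition and is where the sketch is weakest. The finite-cover axiom for $E(F)=\mathcal{H}_T(F)$ is exactly the decomposition property $(\delta)$: for every open cover $\{U_1,U_2\}$ and every $x\in\mathcal{H}$, one must actually produce $x_1\in\mathcal{H}_T(\overline{U_1})$ and $x_2\in\mathcal{H}_T(\overline{U_2})$ with $x=x_1+x_2$. Knowing that the compression $(1-P_T(F_1))T$ has spectrum inside some closed subset of $U_2$ tells you about the spectrum of a quotient operator; it does not by itself hand you a lifting of $(1-P_T(F_1))x$ to a vector in $\mathcal{H}$ whose \emph{local} spectrum (for $T$, not for the quotient) sits inside $\overline{U_2}$. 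A Riesz projection on the corner produces $T$-semi-invariant, not $T$-invariant, subspaces and does not intertwine correctly with the ambient local spectra. This is precisely the nontrivial content that \cite{LN00} spends Chapter~1 building machinery (duality between $(\beta)$ and $(\delta)$, the algebraic and analytic spectral subspaces, quotient lifting lemmas) to handle. A correct argument along your lines would have to reproduce a substantial part of that machinery, and the two sentences you devote to it do not do so. The parenthetical claim that decomposability forces $(\beta)$ ``by testing against the decomposition subspaces'' similarly outsources the hardest analytic estimate to a remark; that implication is a theorem with content, not an observation.
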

See, for example, \cite{LN00} for more on local spectral theory and decomposability.

Lange and Wang~\cite{LW87} gave an alternative characterization of decomposability that we find useful.
Their result (part of Theorem~2.3 of~\cite{LW87}) in the case of
an operator on a Hilbert space is found immediately below.
Here and elsewhere, by invariant subspace, we always mean closed invariant subspace.
As before, the operator on the space $\{0\}$ is taken to have empty spectrum.

\begin{thmsubs}\label{thm:lw}
Consider an operator $T$ on a Hilbert space $\Hc$.
Then $T$ is decomposable if and only if,
for every pair of open discs $G$,$H$, with $\overline{G}\subseteq H$, there exist $T-$invariant subspaces $Y,Z\subseteq\Hc$ such that 
\begin{enumerate}[label=(\alph*),leftmargin=20pt]
\item $\sigma(TP_Y)\subseteq \Cpx\setminus G$ and $\sigma((1-P_Y)T)\subseteq H$, where $P_Y$ denotes the orthogonal projection of $\Hc$ onto $Y$ and where
the spectra of $TP_Y$ and $(1-P_Y)T$ are computed as operators on $Y$ and $Y^\perp$, respectively,
\item $\sigma(TP_Z)\subseteq H$ and $\sigma((1-P_Z)T)\subseteq \Cpx\setminus G$,
where $P_Z$ denotes the orthogonal projection of $\Hc$ onto $Z$ and where
the spectra of $TP_Z$ and $(1-P_Z)T$ are computed as operators on $Z$ and $Z^\perp$, respectively.
\end{enumerate}
\end{thmsubs}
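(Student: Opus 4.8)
For the forward implication, suppose $T$ is decomposable. By Proposition~\ref{decompeqcond}, for every closed $F\subseteq\Cpx$ the local spectral subspace $\Hc_T(F)$ is closed and $\sigma((1-P_T(F))T)\subseteq\overline{\sigma(T)\setminus F}$; moreover, since $F\mapsto\Hc_T(F)$ is the spectral capacity of $T$ (see~\cite{LN00}), also $\sigma(TP_T(F))\subseteq F$, the spectrum being computed on $\Hc_T(F)$. Now let $G,H$ be open discs with $\overline G\subseteq H$, and fix an auxiliary open disc $G'$ with $\overline G\subseteq G'\subseteq\overline{G'}\subseteq H$. Put $Y=\Hc_T(\Cpx\setminus G)$ and $Z=\Hc_T(\overline{G'})$, which are $T$-invariant. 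Applying the above with $F=\Cpx\setminus G$ gives $\sigma(TP_Y)\subseteq\Cpx\setminus G$ and
\[ \sigma((1-P_Y)T)\subseteq\overline{\sigma(T)\setminus(\Cpx\setminus G)}=\overline{\sigma(T)\cap G}\subseteq\overline G\subseteq H, \]
which is (a); applying it with $F=\overline{G'}$ gives $\sigma(TP_Z)\subseteq\overline{G'}\subseteq H$ and, using $\overline{\Cpx\setminus\overline{G'}}=\Cpx\setminus G'$,
\[ \sigma((1-P_Z)T)\subseteq\overline{\sigma(T)\setminus\overline{G'}}\subseteq\Cpx\setminus G'\subseteq\Cpx\setminus G, \]
which is (b).

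For the converse, assume the disc condition; the plan is to build a spectral capacity for $T$ and invoke Proposition~\ref{decompeqcond}. First I would check that $T$ has the single-valued extension property: given $f$ analytic on a connected open $U$ with $(T-z)f(z)\equiv0$, fix $z_0\in U$ and open discs $G\subseteq H$ with $z_0\in G$, $\overline G\subseteq H$, and $\overline H\subseteq U$ (so $U\setminus\overline H\ne\emptyset$), and take $Y$ as in (a). Writing $T$ in matrix form relative to $\Hc=Y\oplus Y^\perp$ as in the proof of Lemma~\ref{lem:utspec} and splitting $f=(a,b)$ accordingly, one gets $((1-P_Y)T-z)b(z)\equiv0$, so $b$ vanishes on $U\setminus\overline H$, where $(1-P_Y)T-z$ is invertible on $Y^\perp$, hence on all of $U$; then $f$ is $Y$-valued and $(TP_Y-z)f(z)\equiv0$, which forces $f\equiv0$ on $G$ because $\sigma(TP_Y)\subseteq\Cpx\setminus G$, and therefore $f\equiv0$. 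With SVEP in hand the local spectral subspaces $\Hc_T(F)$ are well defined, and since $\Hc_T(F)=\Hc_T(F\cap\sigma(T))$ we may take $F$ compact. To realize the candidate $E(F):=\Hc_T(F)$ from the hypothesis, one covers a neighbourhood of $F$ by finitely many open discs $H_i$ with smaller discs $G_i$, $F\subseteq\bigcup_iG_i$ and $\overline{G_i}\subseteq H_i$, feeds each pair into (b) to obtain $T$-invariant $Z_i$ with $\sigma(TP_{Z_i})\subseteq H_i$ and $\sigma((1-P_{Z_i})T)\subseteq\Cpx\setminus G_i$ (and dually into (a) for complementary subspaces), and uses Lemma~\ref{lem:utspec} to compute spectra of block-upper-triangular operators; one then checks that suitable joins $Z_1\vee\cdots\vee Z_k$, taken along shrinking disc covers of $F$, stabilise to a closed $T$-invariant subspace that is independent of the choices, equals $\Hc_T(F)$, and satisfies $\sigma(TP_{E(F)})\subseteq F$.

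The spectral capacity axioms $E(\emptyset)=\{0\}$, $E(\Cpx)=\Hc$ and the countable intersection property are then comparatively routine, using that finite unions and intersections of closed discs generate the compact subsets of $\Cpx$. The crux, and the step I expect to be the main obstacle, is the finite-cover axiom $E(\overline{U_1})+\cdots+E(\overline{U_n})=\Hc$, because the hypothesis supplies a splitting only for one very particular shape of two-set cover. One bootstraps from there: reduce to $n=2$, use compactness of $\sigma(T)$ to replace a finite open cover of $\Cpx$ by one in which the complement of each member is bounded (so that for a two-set cover $\{U,V\}$ the sets $\Cpx\setminus U$ and $\Cpx\setminus V$ become disjoint compacta, each coverable by finitely many small discs inside the other member), and then iterate the splitting furnished by (a) and (b), combining the resulting subspaces $Y$ and $Z$ while invoking Lemma~\ref{lem:utspec} at each stage to keep the spectra of the compressions inside the prescribed sets. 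Assembling these verifications produces a spectral capacity, whence $T$ is decomposable by Proposition~\ref{decompeqcond}. Part of the bookkeeping can be shortened by observing that the disc condition is self-dual: if it holds for $T$ it holds for $T^*$ via the correspondence $Y\leftrightarrow Z^\perp$ between the subspaces in (a) and (b), so any property established for $T$ is automatically available for $T^*$, which is convenient since decomposability treats $T$ and $T^*$ symmetrically.
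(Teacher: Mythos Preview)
The paper does not prove this statement. Theorem~\ref{thm:lw} is quoted from Lange and Wang~\cite{LW87} (their Theorem~2.3) as a known result, with no proof supplied; the paper uses it as a black box in the proof of Proposition~\ref{prop:disks}. So there is nothing in the paper to compare your argument against.

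As to the content of your attempt: the forward implication is fine and is essentially the standard derivation from the spectral capacity $F\mapsto\Hc_T(F)$. The converse, however, is not a proof but a plan. You explicitly flag the finite-cover axiom $E(\overline{U_1})+\cdots+E(\overline{U_n})=\Hc$ as ``the main obstacle'' and describe an iterative bootstrapping strategy without carrying it out; likewise the claim that joins $Z_1\vee\cdots\vee Z_k$ along shrinking disc covers ``stabilise to a closed $T$-invariant subspace that is independent of the choices, equals $\Hc_T(F)$'' is asserted, not verified. These are precisely the places where the actual work in Lange and Wang's paper lies, and neither follows formally from what you have written. If you want a self-contained proof, you would need to execute those steps in full (or, more simply, cite \cite{LW87} as the present paper does).
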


We will also need the next proposition, which follows from a result of Frunz\u{a}~\cite{Fr76}.
\begin{propsubs}\label{prop:T*decomp}
If an operator $T$ on a Hilbert space is decomposable, then its adjoint $T^*$ is decomposable.
\end{propsubs}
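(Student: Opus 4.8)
The plan is to deduce the statement from the Lange--Wang characterization, Theorem~\ref{thm:lw}, using the standard interplay between a $T$-invariant subspace $Y$ and the $T^*$-invariant subspace $Y^\perp$. (It would also follow from Frunz\u{a}'s theorem~\cite{Fr76} that decomposability passes to the Banach-space dual operator, together with the fact that the Riesz isomorphism identifies $T^*$ with the dual of $T$ up to a conjugate-linear isometry; but the route through Theorem~\ref{thm:lw} is self-contained given what has already been recalled.)

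First I would record the elementary fact underlying the argument. If $Y\subseteq\Hc$ is a closed $T$-invariant subspace with orthogonal projection $P$, then $PTP=TP$, whence $PT^*(1-P)=0$, so $Y^\perp$ is $T^*$-invariant. In the block decomposition $\Hc=Y\oplus Y^\perp$, the compression of $T^*$ to $Y$ is the Hilbert-space adjoint of $T|_Y$, and the compression of $T^*$ to $Y^\perp$ is the Hilbert-space adjoint of the compression $(1-P)T$ of $T$ to $Y^\perp$. Since $\sigma(R^*)=\sigma(R)^*$ for every bounded operator $R$ on a Hilbert space --- where, as in Lemma~\ref{lem:basicHS}, $S^*$ denotes the image of $S\subseteq\Cpx$ under complex conjugation --- the spectrum of $T^*$ compressed to $Y$ equals $\sigma(TP_Y)^*$ and the spectrum of $T^*$ compressed to $Y^\perp$ equals $\sigma\big((1-P_Y)T\big)^*$, in the notation of Theorem~\ref{thm:lw}.

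Now assume $T$ is decomposable, and let $G,H$ be open discs with $\overline G\subseteq H$; I must produce $T^*$-invariant subspaces verifying conditions (a) and (b) of Theorem~\ref{thm:lw} for the pair $(G,H)$. The one bookkeeping point is that the conjugated pair $(G^*,H^*)$ --- again open discs with $\overline{G^*}\subseteq H^*$ --- is what should be fed into the hypothesis on $T$. Applying Theorem~\ref{thm:lw} to $T$ and $(G^*,H^*)$ yields $T$-invariant subspaces $Y_0,Z_0$ with
\begin{align*}
&\sigma(TP_{Y_0})\subseteq\Cpx\setminus G^*,\qquad\sigma((1-P_{Y_0})T)\subseteq H^*,\\
&\sigma(TP_{Z_0})\subseteq H^*,\qquad\sigma((1-P_{Z_0})T)\subseteq\Cpx\setminus G^*.
\end{align*}
Put $Y:=Z_0^\perp$ and $Z:=Y_0^\perp$, both $T^*$-invariant. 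Using the identities from the previous paragraph together with $(\Cpx\setminus G^*)^*=\Cpx\setminus G$ and $(H^*)^*=H$, a direct check gives $\sigma(T^*P_Y)\subseteq\Cpx\setminus G$, $\sigma((1-P_Y)T^*)\subseteq H$, $\sigma(T^*P_Z)\subseteq H$ and $\sigma((1-P_Z)T^*)\subseteq\Cpx\setminus G$, which are exactly conditions (a) and (b) of Theorem~\ref{thm:lw} for $T^*$ and $(G,H)$. Hence $T^*$ is decomposable.

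The argument presents no essential obstacle once Theorem~\ref{thm:lw} is available, since that theorem carries the analytic content. The only delicate points are the routine verification that each corner of $T^*$ is the adjoint of the corresponding corner of $T$ (so that spectra transform by complex conjugation), and keeping track of the conjugations --- in particular, that $Y_0$ itself need not be $T^*$-invariant, only $Y_0^\perp$ is.
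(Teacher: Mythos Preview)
Your argument is correct. The block-matrix observation that a $T$-invariant subspace $Y$ yields a $T^*$-invariant subspace $Y^\perp$, with the corners of $T^*$ being the adjoints of the corresponding corners of $T$, is exactly right, and your bookkeeping with the conjugated discs $(G^*,H^*)$ and the swap $Y:=Z_0^\perp$, $Z:=Y_0^\perp$ cleanly delivers conditions (a) and (b) of Theorem~\ref{thm:lw} for $T^*$.

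The paper, however, does not actually prove this proposition: it simply records it as a consequence of Frunz\u{a}'s theorem~\cite{Fr76} and moves on. So your route genuinely differs. The paper's approach imports a general Banach-space duality result (decomposability passes to the dual operator), which then specializes to Hilbert space via the conjugate-linear Riesz identification --- essentially the alternative you mention and set aside. Your approach instead leverages Theorem~\ref{thm:lw}, which is already stated in the paper, to give a short self-contained argument that avoids the Banach-space machinery and the conjugate-linearity subtleties of the Riesz map. What the Frunz\u{a} citation buys is brevity and generality beyond Hilbert space; what your argument buys is that a reader need not consult~\cite{Fr76} at all, since everything follows from tools already on the table.
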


Haagerup and Schultz proved a relation between the hyperinvariant subspaces that they constructed and local spectral subspaces
for decomposable operators in finite von Neumann algebras.
The following is Proposition~9.2 of~\cite{HS09}:

\begin{propsubs}\label{prop:HSdecomp}
Suppose $T\in\Mcal$ is a decomposable operator.
Then for every Borel set $X\subseteq\Cpx$,
the Haagerup-Schultz projection $P(T,X)$
equals the projection onto the closure of the local spectral subspace ${\mathcal H}_T(X)$ of $T$ for $X$.
\end{propsubs}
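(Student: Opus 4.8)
The plan is to prove $P(T,X)=P_{\overline{\Hc_T(X)}}$ first for compact sets $F$ and then to pass to arbitrary Borel $X$ by regularity. For the passage, recall that for Borel $X$ the space $\Hc_T(X)$ is the glocal subspace $\clspan\bigcup\{\Hc_T(K)\mid K\subseteq X\text{ compact}\}$, so --- granting the compact case, and using that each $\Hc_T(K)$ is already closed because $T$ is decomposable --- $P_{\overline{\Hc_T(X)}}=\bigvee_KP_{\Hc_T(K)}=\bigvee_KP(T,K)$ with $K$ ranging over compact subsets of $X$. This join is $\le P(T,X)$ by the monotonicity statement in Theorem~\ref{thm:hsproj}, while its trace is at least $\sup_K\nu_T(K)=\nu_T(X)=\tau(P(T,X))$ by inner regularity of the Radon probability measure $\nu_T$; faithfulness of $\tau$ then forces equality. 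So it suffices to treat compact $F$. For such $F$, $\Hc_T(F)$ is closed (Proposition~\ref{decompeqcond}) and, being a local spectral subspace, $T$-hyperinvariant, so $P_{\Hc_T(F)}\in\Mcal$; and, by standard local spectral theory (see~\cite{LN00}), $\sigma(TP_{\Hc_T(F)})\subseteq F$, the spectrum being computed in $P_{\Hc_T(F)}\Mcal P_{\Hc_T(F)}$.

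To obtain $P_{\Hc_T(F)}\le P(T,F)$, note that $\sigma(TP_{\Hc_T(F)})\subseteq F$ forces the Brown measure of the compression $TP_{\Hc_T(F)}$ to be concentrated in $F$. The compression formula of Theorem~\ref{thm:Pcompress}, applied to the $T$-invariant projection $Q=P_{\Hc_T(F)}$, gives $P^{(Q)}(TQ,F)=P(T,F)\wedge Q$; but the left-hand side is the Haagerup--Schultz projection, inside $Q\Mcal Q$, of an operator whose Brown measure puts full mass on $F$, so its normalized trace is $1$ and hence, by faithfulness, it equals $Q$. Therefore $P(T,F)\wedge P_{\Hc_T(F)}=P_{\Hc_T(F)}$, i.e.\ $P_{\Hc_T(F)}\le P(T,F)$. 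Exactly the same reasoning applies to any decomposable operator and any compact set; running it for the decomposable operator $T^{*}$ of Proposition~\ref{prop:T*decomp}, over all compact subsets of $\Cpx\setminus F^{*}$, and using the monotonicity of Theorem~\ref{thm:hsproj} together with the description of the glocal subspace as a join over compact subsets, one gets $P_{\,\overline{\Hc_{T^{*}}(\Cpx\setminus F^{*})}}\le P(T^{*},\Cpx\setminus F^{*})$.

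For the reverse inclusion $P(T,F)\le P_{\Hc_T(F)}$ I would pass to the adjoint. Part (iv) of Lemma~\ref{lem:basicHS} gives $P(T^{*},\Cpx\setminus F^{*})=1-P(T,F)$. The only further ingredient is the duality, valid for decomposable operators, between local spectral subspaces of $T$ and of $T^{*}$: $\Hc_T(F)^{\perp}=\overline{\Hc_{T^{*}}(\Cpx\setminus F^{*})}$ (see~\cite{LN00}; this is in the same circle of ideas as Proposition~\ref{prop:T*decomp}). Granting it, $P_{\,\overline{\Hc_{T^{*}}(\Cpx\setminus F^{*})}}=1-P_{\Hc_T(F)}$, and combining with the inequality of the previous paragraph yields $1-P_{\Hc_T(F)}\le 1-P(T,F)$, i.e.\ $P(T,F)\le P_{\Hc_T(F)}$. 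Together with the first inclusion this settles the compact case, and the regularity reduction finishes the general Borel case.

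The step I expect to carry the real weight is this duality $\Hc_T(F)^{\perp}=\overline{\Hc_{T^{*}}(\Cpx\setminus F^{*})}$ --- equivalently, the single fact it packages that is not formal: for decomposable $T$, the Brown measure of the complementary piece $(1-P_{\Hc_T(F)})T$ charges no point of the topological boundary $\partial F$. Part (iii) of Proposition~\ref{decompeqcond} only tells us directly that this piece has spectrum contained in $\overline{\sigma(T)\setminus F}$, which can meet $F$ precisely along $\partial F$, and decomposability is exactly what rules out Brown-measure mass there. Two smaller points also need care: it is the glocal subspace of the \emph{open} set $\Cpx\setminus F^{*}$ that appears above, and its closure is in general strictly smaller than $\Hc_{T^{*}}(\,\overline{\Cpx\setminus F^{*}}\,)$; and the complex-conjugation bookkeeping in the passage to $T^{*}$ must be kept consistent with Lemma~\ref{lem:basicHS}.
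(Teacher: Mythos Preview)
The paper does not prove this statement; it is quoted as Proposition~9.2 of~\cite{HS09}, so there is no in-paper argument to compare yours against.

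Your argument is essentially correct. The inequality $P_{\Hc_T(F)}\le P(T,F)$ via Theorem~\ref{thm:Pcompress} is clean and valid; the reduction from compact to Borel using inner regularity of $\nu_T$ together with $\Hc_T(X)=\bigcup\{\Hc_T(K):K\subseteq X\text{ compact}\}$ is fine. The decisive external input you flag --- the duality $\Hc_T(F)^{\perp}=\overline{\Hc_{T^{*}}(\Cpx\setminus F^{*})}$ for decomposable $T$ --- does hold: it is the Hilbert-space form of the duality between the spectral capacities of a decomposable operator and its Banach adjoint (this is in the circle of ideas around Frunz\u{a}'s theorem, see~\cite{LN00}), with the complex conjugation arising from the antilinear identification $\Hc\cong\Hc'$. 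Your closing speculation that this duality encodes a statement about Brown measure on $\partial F$ is off the mark, though: the duality is pure local spectral theory and makes no reference to traces or Brown measure.

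One remark on logical order: your route leans on Theorems~\ref{thm:HSprojLattice} and~\ref{thm:Pcompress}, which come from the later paper~\cite{CDSZ}. Those results are proved there directly from the Haagerup--Schultz construction, not via Proposition~\ref{prop:HSdecomp}, so there is no circularity; but this is not how~\cite{HS09} itself argues. Haagerup and Schultz work instead with their explicit description of the range of $P(T,F)$ in terms of sequences with controlled growth and match it against the analytic-resolvent characterization of $\Hc_T(F)$.
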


Note that (see Proposition 1.2.19 of~\cite{LN00}), if $F$ is a closed subset of $\Cpx$ and if $T$ is decomposable,
then the local spectral subspace $\Hc_T(F)$ is closed.

\section{Decomposability, Borel decomposability and Haagerup--Schultz projections}
\label{sec:Boreldec}

Lange and Wang's criterion (Theorem~\ref{thm:lw}) for decomposability,
together with Proposition~\ref{prop:HSdecomp} of Haagerup and Schultz, yield the following characterization.
\begin{prop}\label{prop:disks}
Let $T\in\Mcal$.
Then the following are equivalent:
\begin{enumerate}[label=(\roman*),leftmargin=25pt]
\item\label{it:Tdec} $T$ is decomposable.
\item\label{it:sigmaF} For all closed subsets $F$ of $\Cpx$,
\begin{align}
\sigma(TP(T,F))&\subseteq F, \label{eq:sigF} \\
\sigma((1-P(T,\Cpx\setminus F))T)&\subseteq F, \label{eq:sig1-F}
\end{align}
where the spectra are computed in the compressions of $\Mcal$ by the projections  $P(T,F)$ and $1-P(T,\Cpx\setminus F)$, respectively.
\item\label{it:sigmaA} For all Borel subsets $A$ of $\Cpx$,
\begin{align}
\sigma(TP(T,A))&\subseteq\overline{A}, \label{eq:sigA} \\
\sigma((1-P(T,\Cpx\setminus A))T)&\subseteq\overline{A}, \label{eq:sig1-A}
\end{align}
where the spectra are computed in the compressions of $\Mcal$ by the projections  $P(T,A)$ and $1-P(T,\Cpx\setminus A)$, respectively.
\item\label{it:sigmaG}  For every open disk $D$ in $\Cpx$,
\begin{align}
\sigma(TP(T,\barD))&\subseteq\barD, \label{eq:sigG} \\
\sigma(TP(T,\Cpx\setminus D))&\subseteq\Cpx\setminus D, \label{eq:sigGc} \\
\sigma((1-P(T,\barD))T)&\subseteq \Cpx\setminus D, \label{eq:sig1-G} \\
\sigma((1-P(T,\Cpx\setminus D))T)&\subseteq\barD, \label{eq:sig1-Gc}
\end{align}
where the spectra are computed in the compressions of $\Mcal$ by the projections $P(T,\barD)$, $P(T,\Cpx\setminus D)$,
$1-P(T,\barD)$ and $1-P(T,\Cpx\setminus D)$, respectively,
\item\label{it:sigmaGbdy}  For every open disk $D$ in $\Cpx$ such that $\nu_T(\partial D)=0$, the inclusions \eqref{eq:sigG}--\eqref{eq:sig1-Gc} hold.
\end{enumerate}
\end{prop}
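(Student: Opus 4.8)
The plan is to prove the cyclic chain
$\ref{it:Tdec}\Rightarrow\ref{it:sigmaF}\Rightarrow\ref{it:sigmaA}\Rightarrow\ref{it:sigmaG}\Rightarrow\ref{it:sigmaGbdy}\Rightarrow\ref{it:Tdec}$.
Two of the links are essentially formal: $\ref{it:sigmaA}\Rightarrow\ref{it:sigmaG}$ is obtained by specializing \eqref{eq:sigA} and \eqref{eq:sig1-A} to the four Borel sets $A=\barD$, $A=\Cpx\setminus D$, $A=\Cpx\setminus\barD$, $A=D$, whose closures are $\barD$, $\Cpx\setminus D$, $\Cpx\setminus D$, $\barD$, respectively, and $\ref{it:sigmaG}\Rightarrow\ref{it:sigmaGbdy}$ is immediate. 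So the real content lies in $\ref{it:Tdec}\Rightarrow\ref{it:sigmaF}$, $\ref{it:sigmaF}\Rightarrow\ref{it:sigmaA}$, and $\ref{it:sigmaGbdy}\Rightarrow\ref{it:Tdec}$.

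For $\ref{it:Tdec}\Rightarrow\ref{it:sigmaF}$, I would first note that $T^*$ is also decomposable (Proposition~\ref{prop:T*decomp}) and that for every closed $G$ the local spectral subspaces $\Hc_T(G)$ and $\Hc_{T^*}(G)$ are closed (the remark following Proposition~\ref{prop:HSdecomp}), so that $P(T,G)$ and $P(T^*,G)$ are exactly the projections onto them (Proposition~\ref{prop:HSdecomp}). The inclusion \eqref{eq:sigF} is then the standard fact that a decomposable operator restricted to $\Hc_T(F)$ has spectrum contained in $F$ (see~\cite{LN00}). For \eqref{eq:sig1-F} I would use Lemma~\ref{lem:basicHS}\ref{it:PT*} to rewrite $1-P(T,\Cpx\setminus F)=P(T^*,F^*)$; since the range of this projection is $T^*$-invariant, the operator $(1-P(T,\Cpx\setminus F))T$, acting on the range of $1-P(T,\Cpx\setminus F)$, has adjoint $T^*\restrict\Hc_{T^*}(F^*)$, whose spectrum lies in $F^*$, and passing to complex conjugates gives $\sigma((1-P(T,\Cpx\setminus F))T)\subseteq F$.

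For $\ref{it:sigmaF}\Rightarrow\ref{it:sigmaA}$, the key point is the monotonicity lemma: if $p\le q$ are $T$-invariant projections, then $\sigma(Tp)\subseteq\sigma(Tq)$ in the respective compressions, because $p$ is a $Tq$-invariant projection inside $q\Mcal q$ and Lemma~\ref{lem:utspec} applied there gives $\sigma(Tq)=\sigma(Tp)\cup\sigma((q-p)(Tq))$; applying this to $T^*$ and conjugating spectra yields the dual statement that $p'\le q'$ both $T$-invariant implies $\sigma((1-q')T)\subseteq\sigma((1-p')T)$. Now for Borel $A$, monotonicity of Haagerup--Schultz projections (Theorem~\ref{thm:hsproj}) gives $P(T,A)\le P(T,\overline A)$ and $P(T,\Cpx\setminus\overline A)\le P(T,\Cpx\setminus A)$, so \eqref{eq:sigA} and \eqref{eq:sig1-A} follow from \eqref{eq:sigF} and \eqref{eq:sig1-F} applied to the closed set $\overline A$.

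The main obstacle is $\ref{it:sigmaGbdy}\Rightarrow\ref{it:Tdec}$, which I would handle with Lange and Wang's criterion (Theorem~\ref{thm:lw}). Given open disks $G,H$ with $\overline G\subseteq H$, I would first produce an intermediate disk $D$, concentric with $G$, satisfying $\overline G\subseteq D$, $\overline D\subseteq H$ and $\nu_T(\partial D)=0$: compactness of $\overline G$ inside the open set $H$ yields an open interval of admissible radii, and since the circles $\partial D$ for distinct radii are pairwise disjoint while $\nu_T$ is a finite measure, only countably many radii fail the condition $\nu_T(\partial D)=0$. Taking $Y$ and $Z$ to be the ranges of $P(T,\Cpx\setminus D)$ and $P(T,\barD)$ — both $T$-invariant, as Haagerup--Schultz projections are $T$-hyperinvariant — the four inclusions \eqref{eq:sigG}--\eqref{eq:sig1-Gc} provided by $\ref{it:sigmaGbdy}$ become, after using $G\subseteq D$ and $\barD\subseteq H$, exactly conditions (a) and (b) of Theorem~\ref{thm:lw} for the pair $(Y,Z)$; hence $T$ is decomposable. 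The only delicate points here are securing the good radius and checking that the inclusions line up correctly with the Lange--Wang conditions; degenerate cases in which some projection is $0$ or $1$ are covered by the convention that the operator on $\{0\}$ has empty spectrum.
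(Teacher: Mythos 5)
Your proof is correct and follows essentially the same route as the paper's: the (i)$\Rightarrow$(ii) step via $T^*$-decomposability and Lemma~\ref{lem:basicHS}\ref{it:PT*}, the monotonicity argument from Lemma~\ref{lem:utspec} for (ii)$\Rightarrow$(iii), and the Lange--Wang criterion with the projections $P(T,\Cpx\setminus D)$ and $P(T,\barD)$ for (v)$\Rightarrow$(i). The only cosmetic differences are that you spell out why a disk $D$ with $\nu_T(\partial D)=0$ exists and list the explicit Borel sets used in (iii)$\Rightarrow$(iv), both of which the paper leaves implicit.
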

\begin{proof}
We first show \ref{it:Tdec}$\implies$\ref{it:sigmaF}.
Suppose $T$ is decomposable.
By Proposition~\ref{prop:HSdecomp} and the properties of spectral subspaces, the inclusion~\eqref{eq:sigF} follows.
But then, since (see Proposition~\ref{prop:T*decomp}) $T^*$ is also decomposable and by using Lemma~\ref{lem:basicHS}\ref{it:PT*}, we have
\[
\sigma((1-P(T,\Cpx\setminus F))T)=\sigma(P(T^*,F^*)T)=\sigma(T^*P(T^*,F^*))^*\subseteq (F^*)^*=F,
\]
where we have used the notation $X^*$ for the complex conjugate of a subset of $\Cpx$.
This proves~\eqref{eq:sig1-F}.

We now show \ref{it:sigmaF}$\implies$\ref{it:sigmaA}.
Since $P(T,A)$ is a $TP(T,\overline{A})$-invariant subprojection of $P(T,\overline{A})$,
by Lemma~\ref{lem:utspec}
we have $\sigma(TP(T,A))\subseteq\sigma(TP(T,\overline{A}))$, where the spectra are computed in the compressions
of $\Mcal$ by $P(T,A)$ and $P(T,\overline{A})$, respectively.
Thus, invoking~\ref{it:sigmaF} yields~\eqref{eq:sigA}.
Moreover, $1-P(T,\Cpx\setminus A)$ is a $(1-P(T,\Cpx\setminus\overline{A}))T$-coinvariant subprojection of $1-P(T,\Cpx\setminus\overline{A})$,
so by Lemma~\ref{lem:utspec}
we have
\[
\sigma\big((1-P(T,\Cpx\setminus A))T\big)\subseteq\sigma\big((1-P(T,\Cpx\setminus\overline{A}))T\big),
\]
where the spectra are computed in the compressions of $\Mcal$ by $1-P(T,\Cpx\setminus A)$ and $1-P(T,\Cpx\setminus\overline{A})$, respectively.
Thus,~\eqref{eq:sig1-A} follows from~\ref{it:sigmaF}.

The implications \ref{it:sigmaA}$\implies$\ref{it:sigmaG} and \ref{it:sigmaG}$\implies$\ref{it:sigmaGbdy} follow immediately.

We now show \ref{it:sigmaGbdy}$\implies$\ref{it:Tdec}, by appealing to Theorem~\ref{thm:lw}.
Suppose $G$ and $H$ are open disks in $\Cpx$ with $\Gbar\subseteq H$.
There exists an open disk $D$ such that $G\subseteq D\subseteq\barD\subseteq H$ and $\nu_T(\partial D)=0$.
Let $Y=P(T,\Cpx\setminus D)\Hc$ and $Z=P(T,\barD)$.
Then
\begin{align*}
\text{eqn.~\eqref{eq:sigGc}}&\implies\sigma(TP_Y)\subseteq\Cpx\setminus D\subseteq\Cpx\setminus G, \\
\text{eqn.~\eqref{eq:sig1-G}}&\implies\sigma((1-P_Y)T)\subseteq\barD\subseteq H, \\
\text{eqn.~\eqref{eq:sigG}}&\implies\sigma(TP_Z)\subseteq\barD\subseteq H, \\
\text{eqn.~\eqref{eq:sig1-Gc}}&\implies\sigma((1-P_Z)T)\subseteq\Cpx\setminus D\subseteq\Cpx\setminus G,
\end{align*}
as required.
\end{proof}

\begin{lem}\label{lem:PBA}
Suppose $T\in\Mcal$ is decomposable.
Let $A$ and $B$ be Borel subsets of $\Cpx$ with $A\subseteq B$.
Then
\begin{equation}\label{eq:sigBA}
\sigma\big((P(T,B)-P(T,A))T(P(T,B)-P(T,A))\big)\subseteq \overline{B}\cap\overline{\Cpx\setminus A},
\end{equation}
where the spectrum is computed in the compression of $\Mcal$ by $P(T,B)-P(T,A)$.
\end{lem}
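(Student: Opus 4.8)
The plan is to reduce the claim to the two inclusions already available from Proposition~\ref{prop:disks}, applied inside the compressed algebra. First I would set $R = P(T,B)$ and note that $R$ is $T$-invariant, so I can work inside $R\Mcal R$ with the operator $TR$, whose Brown measure is $\nu_{TR}$, concentrated in $\overline B$ by Theorem~\ref{thm:hsproj}(iii). The key compatibility tool is Theorem~\ref{thm:Pcompress}: since $P(T,A)\le P(T,B)=R$, the projection $P(T,A)$ equals $P^{(R)}(TR,A)$, the Haagerup--Schultz projection of $TR$ for $A$ computed in $R\Mcal R$. Moreover, by Frunz\u{a}/Proposition~\ref{prop:T*decomp} and the hereditary behaviour of decomposability under restriction to $T$-invariant subspaces (the restriction of a decomposable operator to a closed invariant subspace on which the local spectral subspaces behave well), I would argue $TR$ is decomposable as an element of $R\Mcal R$; alternatively, since decomposability is equivalent to the spectral-inclusion conditions of Proposition~\ref{prop:disks}, I can verify those directly for $TR$ using Theorem~\ref{thm:Pcompress} and Lemma~\ref{lem:utspec}, transferring the inclusions for $T$ down to $TR$.

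Granting that $TR$ is decomposable in $R\Mcal R$, I would apply Proposition~\ref{prop:disks}\ref{it:sigmaA} to $TR$ with the Borel set $\Cpx\setminus A$ (intersected with the relevant spectrum, but that is harmless). The second inclusion in \ref{it:sigmaA}, namely $\sigma\big((1-P^{(R)}(TR,A))(TR)\big)\subseteq\overline{\Cpx\setminus A}$ computed in the compression of $R\Mcal R$ by $R - P^{(R)}(TR,A) = P(T,B)-P(T,A)$, gives exactly $\sigma\big((P(T,B)-P(T,A))T(P(T,B)-P(T,A))\big)\subseteq\overline{\Cpx\setminus A}$. Separately, the same compressed operator is a compression of $TR$ by a $TR$-co-invariant subprojection of $R$, so Lemma~\ref{lem:utspec} gives $\sigma\big((P(T,B)-P(T,A))T(P(T,B)-P(T,A))\big)\subseteq\sigma(TR)\subseteq\overline B$, the last inclusion because the Brown-measure support controls nothing here directly — rather, one uses that $TR$ decomposable forces $\sigma(TR)=\sigma(TP(T,B))\subseteq\overline B$ by \eqref{eq:sigA} applied to $T$ with $A=B$. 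Intersecting the two inclusions yields \eqref{eq:sigBA}.

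The main obstacle I anticipate is establishing cleanly that $TR$ is decomposable as an element of $R\Mcal R$, or equivalently that the characterization in Proposition~\ref{prop:disks} passes to the compression by a Haagerup--Schultz projection. The natural route is: for an open disk $D$, compute $P^{(R)}(TR,\barD) = P(T,\barD)\wedge R = P(T,\barD\cap B)$ (using Theorem~\ref{thm:Pcompress} and Theorem~\ref{thm:HSprojLattice}), then check $\sigma\big((TR)\,P^{(R)}(TR,\barD)\big)\subseteq\barD$ by recognizing $(TR)P^{(R)}(TR,\barD)$ as $TP(T,\barD\cap B)$ up to the identification of compressed algebras, and invoking \eqref{eq:sigA} for $T$ with $A = \barD\cap B$; the co-invariant inclusions are handled symmetrically via Lemma~\ref{lem:basicHS}\ref{it:PT*} and Proposition~\ref{prop:T*decomp}, exactly as in the proof of Proposition~\ref{prop:disks}. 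Once the bookkeeping with Theorem~\ref{thm:Pcompress} is in place, each spectral inclusion for $TR$ is a direct consequence of one for $T$, so the real work is purely in carefully matching up the Haagerup--Schultz projections under compression.
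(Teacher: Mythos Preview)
Your argument for the inclusion in $\overline{B}$ is correct and coincides with the paper's: writing $q_1=P(T,B)$ and $q_2=P(T,A)$, the projection $q_1-q_2$ is $Tq_1$-coinvariant, so Lemma~\ref{lem:utspec} gives $\sigma^{(q_1-q_2)}((q_1-q_2)T(q_1-q_2))\subseteq\sigma^{(q_1)}(Tq_1)\subseteq\overline{B}$ by~\eqref{eq:sigA}.

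The gap is in the other inclusion. You propose to show that $TR$ is decomposable in $R\Mcal R$ (with $R=q_1$) and then apply Proposition~\ref{prop:disks}\ref{it:sigmaA} to $TR$. But decomposability of $TR$ is precisely what is \emph{not} available from mere decomposability of $T$: requiring $TP(T,F)$ to be decomposable for every closed $F$ is the definition of \emph{strong} decomposability, and whether decomposability implies strong decomposability for elements of finite von Neumann algebras is left open in the paper (see the Questions following Proposition~\ref{prop:nofimpliesa}; in the general Hilbert-space setting it fails by Eschmeier's example~\cite{E88}). Your proposed verification of the disk conditions for $TR$ breaks down exactly at the coinvariant inclusions of type~\eqref{eq:sig1-G}--\eqref{eq:sig1-Gc}: unwinding them via Lemma~\ref{lem:basicHS}\ref{it:PT*} and Theorem~\ref{thm:Pcompress} leads to needing inclusions of the form
\[
\sigma\big((P(T,B)-P(T,B\cap\barD))T(P(T,B)-P(T,B\cap\barD))\big)\subseteq \Cpx\setminus D,
\]
which is of the same shape as---and in fact stronger than---the lemma you are trying to prove; compare Proposition~\ref{prop:strdec} and the Remark that follows it.

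The paper avoids this entirely. For the $\overline{\Cpx\setminus A}$ inclusion it does not pass to $R\Mcal R$ at all: instead it observes that $q_1-q_2$ is $(1-q_2)T(1-q_2)$-\emph{invariant} (since $q_1$ is $T$-invariant and $q_1-q_2=(1-q_2)q_1$), so by Lemma~\ref{lem:utspec},
\[
\sigma^{(q_1-q_2)}\big((q_1-q_2)T(q_1-q_2)\big)\subseteq\sigma^{(1-q_2)}\big((1-q_2)T(1-q_2)\big)\subseteq\overline{\Cpx\setminus A},
\]
the last inclusion being~\eqref{eq:sig1-A} applied directly to $T$. Intersecting with the first inclusion gives~\eqref{eq:sigBA}. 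The point is that both needed spectral bounds come straight from Proposition~\ref{prop:disks}\ref{it:sigmaA} for $T$ itself, with no decomposability required of any compression.
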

\begin{proof}
Let $q_1=P(T,B)$ and $q_2=P(T,A)$.
For a projection $r\in\Mcal$, let $\sigma^{(r)}(\cdot)$ denote the spectrum computed in $r\Mcal r$.

Since $1-q_2$ is $T$-coinvariant, we have
\[
(q_1-q_2)T(q_1-q_2)=(q_1-q_2)(q_1Tq_1),
\]
and $q_1-q_2$ is $q_1Tq_1$-coinvariant.
Thus, by Lemma~\ref{lem:utspec},
\[
\sigma^{(q_1-q_2)}((q_1-q_2)T(q_1-q_2))\subseteq\sigma^{(q_1)}(q_1Tq_1)\subseteq\overline{B},
\]
where the last inclusion is from Proposition~\ref{prop:disks}\ref{it:sigmaA}.

Furthermore, 
\[
(q_1-q_2)T(q_1-q_2)=(1-q_2)T(1-q_2)q_1,
\]
so $q_1$ is $(1-q_2)T(1-q_2)$-invariant.
Thus, by Lemma~\ref{lem:utspec},
\[
\sigma^{(q_1-q_2)}((q_1-q_2)T(q_1-q_2))\subseteq\sigma^{(1-q_2)}((1-q_2)T(1-q_2))\subseteq\overline{\Cpx\setminus A},
\]
Taking the intersection yields~\eqref{eq:sigBA}.
\end{proof}

A bounded operator $T$ on Hilbert space $\Hc$ is said to be {\em strongly decomposable} if, for every closed subset $F\subseteq\Cpx$, the restriction
of $T$ to the local spectral subspace $\Hc_T(F)$ is decomposable.
(See~\cite{LN00} for more.)
This entails, of course, that $T$ is itself decomposable.
Thus, if $T\in\Mcal$, then by Haagerup and Schultz's result, Proposition~\ref{prop:HSdecomp}, $T$ is strongly decomposable if and only if for every such $F$,
the operator $TP(T,F)$, considered as an element of $P(T,F)\Mcal P(T,F)$, is decomposable.
We get the following:
\begin{prop}\label{prop:strdec}
Let $T\in\Mcal$.
Then $T$ is strongly decomposable if and only if, whenever $E$ and $F$ are closed subsets of $\Cpx$,
we have
\begin{equation}\label{eq:PTFE}
\sigma\big((P(T,F)-P(T,F\setminus E))T(P(T,F)-P(T,F\setminus E))\big)\subseteq F\cap E,
\end{equation}
where the spectrum is computed in the compression of $\Mcal$ by the projection $P(T,F)-P(T,F\setminus E)$.
\end{prop}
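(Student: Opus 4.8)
The plan is to reduce the statement, via Haagerup and Schultz's Proposition~\ref{prop:HSdecomp}, to the equivalence recorded just before the proposition: $T$ is strongly decomposable if and only if, for every closed $F\subseteq\Cpx$, the compression $S_F:=TP(T,F)$, regarded as an element of the tracial von Neumann algebra $P(T,F)\Mcal P(T,F)$, is decomposable. (When $P(T,F)=0$, then $S_F$ acts on $\{0\}$ and is decomposable by convention, while $P(T,F)-P(T,F\setminus E)$ is then the zero projection; these cases are vacuous. Also $P(T,\Cpx)=1$ and $P(T,\emptyset)=0$, both because $\tau$ is faithful.) The one computational input needed is the identification of the Haagerup--Schultz projections of $S_F$ inside $P(T,F)\Mcal P(T,F)$: since $P(T,F)$ is $T$-invariant, Theorem~\ref{thm:Pcompress} together with the lattice property of Theorem~\ref{thm:HSprojLattice} gives, for every Borel $B\subseteq\Cpx$,
\[
P^{(P(T,F))}(S_F,B)=P(T,B)\wedge P(T,F)=P(T,B\cap F).
\]

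Next I would apply Proposition~\ref{prop:disks}, in the form of the equivalence \ref{it:Tdec}$\Leftrightarrow$\ref{it:sigmaF}, to the operator $S_F$ inside $Q\Mcal Q$, where $Q:=P(T,F)$. Using the displayed identity and the $T$-invariance of $P(T,B\cap F)$, condition~\eqref{eq:sigF} for $S_F$ at a closed set $E$ becomes $\sigma(P(T,E\cap F)\,T\,P(T,E\cap F))\subseteq E$, since $S_FP(T,E\cap F)$, viewed in the corner by $P(T,E\cap F)$, equals $P(T,E\cap F)\,T\,P(T,E\cap F)$; and condition~\eqref{eq:sig1-F} for $S_F$ at $E$ becomes $\sigma\big((Q-P(T,F\setminus E))\,T\,(Q-P(T,F\setminus E))\big)\subseteq E$, since $P(T,F\setminus E)$ is a Haagerup--Schultz projection of $S_F$, hence $S_F$-invariant, and a short computation shows $(Q-P(T,F\setminus E))S_F$, viewed in the corner by $Q-P(T,F\setminus E)$, equals $(Q-P(T,F\setminus E))\,T\,(Q-P(T,F\setminus E))$. (Throughout, spectra are computed in the corners just indicated.) Thus $S_F$ is decomposable if and only if both inclusions hold for all closed $E$.

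For the reverse implication of the proposition, assume \eqref{eq:PTFE} for all closed $E,F$. Taking $E=\Cpx$ gives $\sigma(TP(T,F))\subseteq F$ for all closed $F$, and taking $F=\Cpx$ gives $\sigma((1-P(T,\Cpx\setminus E))T)\subseteq E$ for all closed $E$, so by Proposition~\ref{prop:disks} (namely \ref{it:sigmaF}$\Rightarrow$\ref{it:Tdec}) $T$ is decomposable; hence $\Hc_T(F)=P(T,F)\Hc$ for every closed $F$ and the reduction above is legitimate. Now fix a closed $F$. Applying \eqref{eq:PTFE} with the closed set $E\cap F$ in place of $F$ and $\Cpx$ in place of $E$ yields the first translated inclusion of the previous paragraph (with $E\cap F\subseteq E$), while \eqref{eq:PTFE} itself yields the second; hence $S_F$ is decomposable, and as $F$ was arbitrary, $T$ is strongly decomposable. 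Conversely, suppose $T$ is strongly decomposable, so every $S_F$ is decomposable. Fix closed $E,F$ and set $r:=Q-P(T,F\setminus E)$. The translated form of~\eqref{eq:sig1-F} for $S_F$ at $E$ gives $\sigma(rTr)\subseteq E$. For the complementary inclusion, note that $r$ is $S_F$-coinvariant in $Q\Mcal Q$ (its complement in $Q$ being a Haagerup--Schultz projection of $S_F$), so Lemma~\ref{lem:utspec} applied in $Q\Mcal Q$ gives $\sigma(rTr)\subseteq\sigma^{(Q)}(S_F)$, and the translated form of~\eqref{eq:sigF} for $S_F$ at the closed set $F$ (for which $P^{(Q)}(S_F,F)=P(T,F)=Q$) gives $\sigma^{(Q)}(S_F)\subseteq F$; here $\sigma^{(Q)}$ denotes the spectrum computed in $Q\Mcal Q$. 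Intersecting, $\sigma(rTr)\subseteq F\cap E$, which is exactly~\eqref{eq:PTFE}.

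The only real difficulty is bookkeeping: keeping straight the three nested corners $\Mcal\supseteq Q\Mcal Q\supseteq r\Mcal r$ and the corresponding conventions for computing spectra inside compressions, and verifying that operators such as $(Q-P(T,F\setminus E))S_F$ genuinely live in the smaller corner, so that the spectra in the various algebras actually coincide. No analysis beyond what is already packaged in Proposition~\ref{prop:disks}, Theorem~\ref{thm:Pcompress} and Theorem~\ref{thm:HSprojLattice} is involved.
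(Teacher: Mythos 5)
Your proof is correct and follows essentially the same path as the paper's: both reduce strong decomposability to decomposability of each $TP(T,F)$ in the corner $P(T,F)\Mcal P(T,F)$, use Theorem~\ref{thm:Pcompress} and Theorem~\ref{thm:HSprojLattice} to identify $P^{(P(T,F))}(TP(T,F),B)=P(T,B\cap F)$, and invoke the equivalence in Proposition~\ref{prop:disks}\,\ref{it:sigmaF} for the compressed operator. The only difference is cosmetic bookkeeping in the substitutions (the paper uses $E=F$ and then re-substitutes $F=L\cap K$, while you use $E=\Cpx$, $F'=E\cap F$); the underlying argument, including the coinvariance step via Lemma~\ref{lem:utspec} and the explicit check that $T$ is decomposable so that the reduction is legitimate, matches the paper's.
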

\begin{proof}
$T$ is strongly decomposable if and only if for all closed subsets $K\subseteq\Cpx$, $TP(T,K)$ is decomposable.
Fix $K$ and write $S=TP(T,K)$ and $Q=P(T,K)$.
Using Proposition~\ref{prop:disks}\ref{it:sigmaF}, $S$ is decomposable if and only if, for all closed subsets $L\subseteq\Cpx$,
we have
\begin{align*}
\sigma(SP^{(Q)}(S,L))&\subseteq L \\ 
\sigma((Q-P^{(Q)}(S,\Cpx\setminus L))S)&\subseteq L, 
\end{align*}
where $P^{(Q)}(S,\cdot)$ is the Haagerup--Schultz projection computed in $Q\Mcal Q$ and where
the spectra are computed in the compressions of $\Mcal$ by the projections $P^{(Q)}(S,L)$ and $Q-P^{(Q)}(S,\Cpx\setminus L)$, respectively.
From Theorems~\ref{thm:Pcompress} and~\ref{thm:HSprojLattice}, we have 
\[
P^{(Q)}(S,L)=P(T,K\cap L),\qquad P^{(Q)}(S,\Cpx\setminus L)=P(T,K\setminus L).
\]
Thus, we must show that~\eqref{eq:PTFE} holds for all $E$ and $F$ if and only if
\begin{gather}
\sigma(TP(T,L\cap K))\subseteq L, \label{eq:TPLK} \\
\sigma\big((P(T,K)-P(T,K\setminus L))TP(T,K)\big)\subseteq L \label{eq:PKLT}
\end{gather}
hold for all $K$ and $L$.

Assume that~\eqref{eq:PTFE} holds for all $E$ and $F$.
Taking $E=F$, we get
\[
\sigma(TP(T,F))=\sigma(P(T,F)TP(T,F))\subseteq F.
\]
This implies~\eqref{eq:TPLK} holds for all $K$ and $L$.
On the other hand, since $1-P(T,K\setminus L)$ is $T$-coinvariant, we get
\begin{multline}
(P(T,K)-P(T,K\setminus L))TP(T,K) \\
=(P(T,K)-P(T,K\setminus L))T(P(T,K)-P(T,K\setminus L)), \label{eq:PKLTcoinv}
\end{multline}
so applying~\eqref{eq:PTFE} using $F=K$ and $E=L$, we find that~\eqref{eq:PKLT} holds for all $K$ and $L$.

Assume that~\eqref{eq:TPLK} and~\eqref{eq:PKLT} hold for all $K$ and $L$.
Let $E$ and $F$ be closed subsets of $\Cpx$.
Then using~\eqref{eq:PKLTcoinv}, from~\eqref{eq:PKLT} with $K=F$ and $L=E$ we get
\[
\sigma\big((P(T,F)-P(T,F\setminus E))T(P(T,F)-P(T,F\setminus E))\big)\subseteq E.
\]
On the other hand, from~\eqref{eq:TPLK} with $L=K=F$, we get
\[
\sigma(TP(T,F))\subseteq F.
\]
Since $1-P(T,F\setminus E)$ is $T$-coinvariant, using Lemma~\ref{lem:utspec}, we get
\[
\sigma\big((P(T,F)-P(T,F\setminus E))T(P(T,F)-P(T,F\setminus E))\big)\subseteq\sigma(P(T,F)TP(T,F))\subseteq F.
\]
Combining, we have~\eqref{eq:PTFE}.
\end{proof}

\begin{rem}
If we assume only that $T$ is decomposable, then from Lemma~\ref{lem:PBA}, instead of the inclusion~\eqref{eq:PTFE} we get
\begin{multline*}
\sigma\big((P(T,F)-P(T,F\setminus E))T(P(T,F)-P(T,F\setminus E))\big) \\
\subseteq F\cap\overline{\Cpx\setminus(F\setminus E)}
=(F\cap E)\cup(F\cap\overline{\Cpx\setminus F})
=(F\cap E)\cup\partial F.
\end{multline*}
\end{rem}

The remainder of this section is devoted to introducing the notion of Borel decomposability and exhibiting some examples.

\begin{defi}\label{def:fss}
Let $T\in\Mcal$.
We say that $T$ has {\em full spectral distribution} if $\supp(\nu_T)=\sigma(T)$.
\end{defi}

The following shows that there is no ambiguity when considering full spectral distribution of elements that can be taken in a corner $p\Mcal p$ 
of a von Neumann algebra.
\begin{lem}\label{prop:pTp}
Suppose $p$ is a nonzero projection in $\Mcal$ and $T=pTp\in\Mcal$.
If we use $\sigma^{(p)}(T)$ and $\nu_T^{(p)}$ to indicate the spectrum and, respectively, Brown measure computed in $p\Mcal p$
with respect to the trace $\tau(p)^{-1}\tau|_{p\Mcal p}$,
while using $\sigma(T)$ and $\nu_T$ to indicate the quantities computed in $\Mcal$ and with respect to $\tau$, then
\begin{equation}\label{eq:sigmanup}
\supp(\nu_T)=\sigma(T)\quad\Longleftrightarrow\quad\supp(\nu_T^{(p)})=\sigma^{(p)}(T).
\end{equation}
\end{lem}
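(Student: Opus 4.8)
The plan is to reduce both sides of~\eqref{eq:sigmanup} to assertions living in the corner $p\Mcal p$, by expressing $\sigma(T)$ and $\supp(\nu_T)$ (computed in $\Mcal$) in terms of $\sigma^{(p)}(T)$ and $\supp(\nu_T^{(p)})$, and then to settle the resulting elementary equivalence using Proposition~\ref{prop:concomp}. If $p=1$ there is nothing to prove, since then $p\Mcal p=\Mcal$ with $\tau$ itself as normalized trace, so I would assume $p\neq1$. From $T=pTp$ one reads off $T(1-p)=0=(1-p)T$; thus $p$ is a $T$-invariant projection with $p\notin\{0,1\}$, and $(1-p)T$, viewed in $(1-p)\Mcal(1-p)$, is the zero operator.

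First I would apply Lemma~\ref{lem:utspec} to this projection $p$. Since $Tp=T$ as an element of $p\Mcal p$ and $(1-p)T=0$ as an element of the nonzero algebra $(1-p)\Mcal(1-p)$ (whose zero element has spectrum $\{0\}$), Lemma~\ref{lem:utspec} yields $\sigma(T)=\sigma^{(p)}(T)\cup\{0\}$. Then I would apply Theorem~\ref{thm:brmeassplit} to the same $p$: the Brown measure of $Tp$ computed in $p\Mcal p$ with the normalized trace is precisely $\nu_T^{(p)}$, while $\nu_{(1-p)T}=\delta_0$ since $(1-p)T=0$, so Theorem~\ref{thm:brmeassplit} gives $\nu_T=\tau(p)\,\nu_T^{(p)}+\tau(1-p)\,\delta_0$. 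Because $\tau$ is faithful and $p\notin\{0,1\}$, both coefficients are strictly positive, whence $\supp(\nu_T)=\supp(\nu_T^{(p)})\cup\{0\}$.

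It then remains to show that $\supp(\nu_T^{(p)})\cup\{0\}=\sigma^{(p)}(T)\cup\{0\}$ if and only if $\supp(\nu_T^{(p)})=\sigma^{(p)}(T)$, and this last step is the only delicate point, since in general adjoining a common point to two nested sets can make them coincide spuriously. The forward implication from the inner equality is trivial. For the converse, recall the containment $\supp(\nu_T^{(p)})\subseteq\sigma^{(p)}(T)$, which always holds; granting the outer equality, the difference $\sigma^{(p)}(T)\setminus\supp(\nu_T^{(p)})$ is then contained in $\{0\}$, and if it were equal to $\{0\}$ then $\{0\}$ would be relatively open in $\sigma^{(p)}(T)$, being the complement there of the closed set $\supp(\nu_T^{(p)})$, and also relatively closed, hence a connected component of $\sigma^{(p)}(T)$ missing $\supp(\nu_T^{(p)})$ entirely --- contradicting Proposition~\ref{prop:concomp}. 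Therefore the difference is empty, giving $\supp(\nu_T^{(p)})=\sigma^{(p)}(T)$ and completing the argument.
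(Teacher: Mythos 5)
Your proof is correct and follows essentially the same approach as the paper: express $\sigma(T)$ and $\supp(\nu_T)$ as the corner quantities together with $\{0\}$, and use Proposition~\ref{prop:concomp} to rule out the bad case where $\{0\}$ would be an isolated piece of $\sigma^{(p)}(T)$ missing $\supp(\nu_T^{(p)})$. You spell out the reductions (via Lemma~\ref{lem:utspec} and Theorem~\ref{thm:brmeassplit}) and separately dispatch the trivial case $p=1$, which the paper leaves implicit, but the core argument coincides.
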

\begin{proof}
We have
\[
\sigma(T)=\sigma^{(p)}(T)\cup\{0\},\qquad\supp(\nu_T)=\supp(\nu_T^{(p)})\cup\{0\}.
\]
Thus, the implication $\Longleftarrow$ always holds in~\eqref{eq:sigmanup}.
Suppose, for contradiction, that the implication $\Longrightarrow$ fails to hold.
Then we must have $\supp(\nu_T^{(p)})\subsetneq\sigma^{(p)}(T)$ and $\{0\}=\sigma^{(p)}(T)\setminus\supp(\nu_T^{(p)})$.
However, by Proposition~\ref{prop:concomp}, the point $0$ cannot be an isolated point of $\sigma^{(p)}(T)$.
Thus, there is a sequence in $\sigma^{(p)}(T)\setminus\{0\}$ that converges to $0$.
This same sequence lies in $\supp(\nu_T^{(p)})$, so $0\in\supp(\nu_T^{(p)})$, a contradiction.
\end{proof}

\begin{prop}\label{prop:totdiscfsd}
If $T\in\Mcal$ has totally disconnected spectrum, then $T$ has full spectral distribution.
\end{prop}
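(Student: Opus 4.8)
The plan is to read this off almost immediately from Proposition~\ref{prop:concomp}. Recall that for any $T\in\Mcal$ one always has $\supp(\nu_T)\subseteq\sigma(T)$, so the only thing to establish is the reverse inclusion $\sigma(T)\subseteq\supp(\nu_T)$.

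So suppose $\sigma(T)$ is totally disconnected. By definition this means that for every $\lambda\in\sigma(T)$ the connected component of $\sigma(T)$ containing $\lambda$ is the singleton $\{\lambda\}$. Now I would simply apply Proposition~\ref{prop:concomp}: the support $\supp(\nu_T)$ meets every connected component of $\sigma(T)$, and in particular it meets $\{\lambda\}$, i.e.\ $\lambda\in\supp(\nu_T)$. Since $\lambda\in\sigma(T)$ was arbitrary, $\sigma(T)\subseteq\supp(\nu_T)$; combined with the always-valid reverse inclusion this gives $\supp(\nu_T)=\sigma(T)$, which is exactly the assertion that $T$ has full spectral distribution (Definition~\ref{def:fss}).

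There is essentially no obstacle here beyond invoking Proposition~\ref{prop:concomp}; the real content sits in that proposition (ultimately in Brown's Remark~4.4, that $\nu_T(C)>0$ for every nonempty relatively clopen $C\subseteq\sigma(T)$), which is precisely what forces singleton components to lie in the support. One could instead argue ad hoc, writing $\{\lambda\}=\bigcap_k F_k$ for clopen $F_k\subseteq\sigma(T)$ (using that a compact totally disconnected space has a clopen neighborhood basis at each point), noting $\nu_T(F_k)>0$, and passing to a limit to land $\lambda$ in $\supp(\nu_T)$; but this just reproves the relevant special case of Proposition~\ref{prop:concomp}, so the clean route is to cite it directly.
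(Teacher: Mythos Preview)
Your proof is correct and follows essentially the same approach as the paper: both argue that total disconnectedness makes every singleton $\{\lambda\}\subseteq\sigma(T)$ a connected component, and then invoke Proposition~\ref{prop:concomp} to conclude $\lambda\in\supp(\nu_T)$. The paper's version is terser but the content is identical.
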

\begin{proof}
If $\sigma(T)$ is totally disconnected, then every singleton subset of $\sigma(T)$ is a connected component of $\sigma(T)$
so by Proposition~\ref{prop:concomp}, is a subset of $\supp(\nu_T)$.
\end{proof}

Other examples are the DT-operators, introduced in~\cite{DH04}.
These include Voi\-cu\-les\-cu's circular operator and the circular free Poisson operators.
The following is a consequence of Theorem~5.2 and Corollary~5.5 of~\cite{DH04}.
\begin{prop}\label{prop:DTfsd}
If $Z\in\Mcal$ is a DT-operator, then $Z$ has full spectral distribution.
\end{prop}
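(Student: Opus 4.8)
The plan is to combine two computations carried out in~\cite{DH04}: one describing the spectrum of a DT-operator and one describing the support of its Brown measure, and then to observe that the two sets coincide.

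Recall that, by definition, a DT-operator is a DT$(\mu,c)$-operator for some compactly supported Borel probability measure $\mu$ on $\Cpx$ and some $c>0$; the motivating special case $\mu=\delta_0$ gives Voiculescu's circular operator, for which one has $\sigma(Z)=\supp(\nu_Z)=\overline{\mathbb{D}}$. In general, I would first invoke Theorem~5.2 of~\cite{DH04}, which provides an explicit description of $\sigma(Z)$, identifying it with a certain compact set $K=K_{\mu,c}\subseteq\Cpx$, and then invoke Corollary~5.5 of~\cite{DH04}, which shows that the Brown measure $\nu_Z$ has support exactly this same set $K$ (equivalently, that the region on which $\nu_Z$ has positive density is dense in $\sigma(Z)$). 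Putting these together yields $\supp(\nu_Z)=K=\sigma(Z)$, which is precisely the assertion that $Z$ has full spectral distribution in the sense of Definition~\ref{def:fss}.

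The argument involves no new analysis: all of the substance resides in the cited results of~\cite{DH04}. The only point needing care is the bookkeeping required to see that the compact set produced by the spectrum computation in Theorem~5.2 is literally the same as the one appearing in Corollary~5.5, rather than merely being comparable to it up to closure or boundary. Thus the \emph{main obstacle} is just the careful matching of statements from~\cite{DH04}, and there is nothing further to prove here.
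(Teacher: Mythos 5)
Your proposal is correct and follows exactly the route the paper takes: the paper states Proposition~\ref{prop:DTfsd} as "a consequence of Theorem~5.2 and Corollary~5.5 of~\cite{DH04}," which is precisely the combination you describe (spectrum from Theorem~5.2, Brown measure support from Corollary~5.5, and an identification of the two). Nothing further to add.
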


\begin{defi}\label{def:fssdecomp}
Let $T\in\Mcal$.
We will say that $T$ is {\em Borel decomposable}
if, for all Borel subsets $X$ and $Y$ of $\Cpx$ such that $X\subseteq Y$ and $\nu_T(X)<\nu_T(Y)$,
letting $p=P(T,Y)-P(T,X)$, the element $pTp$ has full spectral distribution.
\end{defi}

That the name ``Borel decomposable'' is appropriate can be seen from the characterization found in Proposition~\ref{prop:strdec}
for strong decomposability, 
compared to the result below.
See also Lemma~\ref{lem:PBA} for the case of decomposable operators.
\begin{prop}\label{prop:BoreldecAB}
Let $T\in\Mcal$.
Then $T$ is Borel decomposable if and only if for all Borel sets $A,B\subseteq\Cpx$
with $A\subseteq B$,
we have
\begin{equation}\label{eq:sigPTAB}
\sigma\big((P(T,B)-P(T,A))T(P(T,B)-P(T,A))\big)\subseteq\overline{B\setminus A},
\end{equation}
where the spectrum is computed in the compression of $\Mcal$ by $P(T,B)-P(T,A)$.
\end{prop}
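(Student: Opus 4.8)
The plan is to pass through an explicit description of the Haagerup--Schultz projections of $pTp$ inside $p\Mcal p$. Fix Borel sets $X\subseteq Y$ with $\nu_T(X)<\nu_T(Y)$, so that $p:=P(T,Y)-P(T,X)\ne0$. The first step is to establish that, for every Borel $Z\subseteq\Cpx$,
\[
P^{(p)}(pTp,Z)=P\big(T,X\cup(Z\cap Y)\big)-P(T,X).
\]
This is obtained by applying Theorem~\ref{thm:Pcompress} twice. First one uses the $T$-invariant projection $P(T,Y)$, which by Theorem~\ref{thm:HSprojLattice} identifies the Haagerup--Schultz projection of $TP(T,Y)$ for $Z$, computed inside $P(T,Y)\Mcal P(T,Y)$, as $P(T,Z)\wedge P(T,Y)=P(T,Z\cap Y)$. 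Then, working inside $P(T,Y)\Mcal P(T,Y)$, one uses the projection $P(T,X)$, which is invariant for $TP(T,Y)$ because $P(T,X)\le P(T,Y)$ and $P(T,X)$ is $T$-invariant, and whose co-invariant complement produces the compression $\big(P(T,Y)-P(T,X)\big)TP(T,Y)=pTp$; the lattice identity $P(T,X)\vee P(T,Z\cap Y)=P(T,X\cup(Z\cap Y))$ then finishes it. Taking (normalized) traces and invoking property~(ii) of Theorem~\ref{thm:hsproj} in $p\Mcal p$ gives
\[
\nu_{pTp}(Z)=\frac{\nu_T\big(Z\cap(Y\setminus X)\big)}{\nu_T(Y\setminus X)},
\]
i.e.\ $\nu_{pTp}$ is the normalized restriction of $\nu_T$ to $Y\setminus X$. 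In particular $\supp(\nu_{pTp})\subseteq\overline{Y\setminus X}$, and, for an open set $D$, one has $\nu_{pTp}(D)=0$ if and only if $P(T,X\cup(D\cap Y))=P(T,X)$.

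For the forward implication, suppose $T$ is Borel decomposable and let $A\subseteq B$ be Borel. If $\nu_T(A)=\nu_T(B)$, then $P(T,A)=P(T,B)$ by faithfulness of $\tau$, so $P(T,B)-P(T,A)=0$ and the spectrum in question is empty by convention, hence contained in $\overline{B\setminus A}$. If $\nu_T(A)<\nu_T(B)$, then by definition $pTp$ with $p=P(T,B)-P(T,A)$ has full spectral distribution (a property that, by Lemma~\ref{prop:pTp}, may be checked in $p\Mcal p$), so the first step gives $\sigma^{(p)}(pTp)=\supp(\nu_{pTp})\subseteq\overline{B\setminus A}$, which is~\eqref{eq:sigPTAB}.

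For the converse, assume~\eqref{eq:sigPTAB} for all Borel $A\subseteq B$, and fix $X\subseteq Y$ with $\nu_T(X)<\nu_T(Y)$ and $p=P(T,Y)-P(T,X)$. Since $\supp(\nu_{pTp})\subseteq\sigma^{(p)}(pTp)$ always holds, it suffices to prove the reverse inclusion. Let $\lambda\notin\supp(\nu_{pTp})$ and choose an open disk $D\ni\lambda$ with $\nu_{pTp}(D)=0$; by the first step, $P(T,X\cup(D\cap Y))=P(T,X)$. Put $A=X\cup(D\cap Y)$ and $B=Y$, so that $P(T,B)-P(T,A)=P(T,Y)-P(T,X)=p$. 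Applying~\eqref{eq:sigPTAB} to this choice of $A,B$ yields
\[
\sigma^{(p)}(pTp)\subseteq\overline{B\setminus A}=\overline{(Y\setminus X)\setminus D}\subseteq\Cpx\setminus D,
\]
the last inclusion because $(Y\setminus X)\setminus D$ is contained in the closed set $\Cpx\setminus D$. As $\lambda\in D$, this gives $\lambda\notin\sigma^{(p)}(pTp)$; hence $\sigma^{(p)}(pTp)=\supp(\nu_{pTp})$ and $pTp$ has full spectral distribution, so $T$ is Borel decomposable.

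The step I expect to be the main obstacle is the first one: verifying the two nested invariance hypotheses so that Theorem~\ref{thm:Pcompress} genuinely applies, and correctly identifying the doubly compressed operator $\big(P(T,Y)-P(T,X)\big)T\big(P(T,Y)-P(T,X)\big)$ with the operator arising from the iterated compression. Once the formula for $P^{(p)}(pTp,\cdot)$ (equivalently, for $\nu_{pTp}$) is available, both directions are short; the only additional trick, in the converse, is to feed~\eqref{eq:sigPTAB} the enlarged Borel set $A=X\cup(D\cap Y)$ rather than $X$ itself.
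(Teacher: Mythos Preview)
Your argument is correct. The forward direction matches the paper's; the converse direction is a genuine variant.

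In the paper's converse, one makes a single choice: given $X\subseteq Y$, set $A=X\cap\supp(\nu_T)$ and $B=Y\cap\supp(\nu_T)$. Then $P(T,A)=P(T,X)$ and $P(T,B)=P(T,Y)$, so one application of~\eqref{eq:sigPTAB} immediately gives
\[
\sigma^{(p)}(pTp)\subseteq\overline{B\setminus A}=\overline{(Y\setminus X)\cap\supp(\nu_T)}=\supp(\nu_{pTp}),
\]
finishing the proof in one stroke. Your approach instead works pointwise: for each $\lambda\notin\supp(\nu_{pTp})$ you pick a disk $D\ni\lambda$ with $\nu_{pTp}(D)=0$, enlarge $X$ to $A=X\cup(D\cap Y)$ (which does not change the Haagerup--Schultz projection), and apply~\eqref{eq:sigPTAB} to push the spectrum into $\Cpx\setminus D$. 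The paper's trick is slicker, as it encodes the whole support at once via $\supp(\nu_T)$; yours is perhaps more transparent about why the hypothesis~\eqref{eq:sigPTAB} is being used with Borel sets other than the given $X,Y$, and it makes the role of the explicit formula for $P^{(p)}(pTp,\cdot)$ more visible. Either way the content is the same, and your iterated use of Theorem~\ref{thm:Pcompress} to obtain that formula (and the identification $pTP(T,Y)=pTp$ via $T$-invariance of $P(T,X)$) is carried out correctly.
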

\begin{proof}
Suppose $T$ is Borel decomposable.
If $\nu_T(B)=\nu_T(A)$, then~\eqref{eq:sigPTAB} holds because the indicated spectrum is, by convention, empty.
So assume $\nu_T(B)>\nu_T(A)$.
By the essential property of the Haagerup--Schultz projections (Theorem~\ref{thm:hsproj}) and the fact (Theorem~\ref{thm:Pcompress})
that $P(T,A)$
is also the Haagerup--Schultz projection of the operator $TP(T,B)$ for the set $A$ computed in $P(T,B)\Mcal P(T,B)$,
we have that the Brown measure of the operator 
\begin{equation}\label{eq:PTABop}
(P(T,B)-P(T,A))T(P(T,B)-P(T,A))
\end{equation}
computed in the compression of $\Mcal$ by $P(T,B)-P(T,A)$
equals the renormalized restriction of $\nu_T$ to $B\setminus A$.
Thus, the support of this Brown measure is contained in $\overline{B\setminus A}$.
Since $T$ is assumed to be Borel decomposable, the spectrum of the operator~\eqref{eq:PTABop} is equal to the support of its Brown measure, and~\eqref{eq:sigPTAB} holds.

On the other hand, suppose that the inclusion~\eqref{eq:sigPTAB} holds for every $A$ and $B$, and let us show that $T$ is Borel decomposable.
Let $X$ and $Y$ be Borel sets such that $X\subseteq Y$ and $\nu_T(X)<\nu_T(Y)$.
Let $p=P(T,Y)-P(T,X)$.
We must show that $pTp$ has full spectral distribution.
The same reasoning as above shows that the Brown measure of $pTp$ is the renormalized restriction of $\nu_T$ to the set $Y\setminus X$.
Let
\[
A=X\cap\supp(\nu_T),\qquad B=Y\cap\supp(\nu_T).
\]
Then
$P(T,A)=P(T,X)$ and $P(T,B)=P(T,Y)$.
So by~\eqref{eq:sigPTAB}, the spectrum of $pTp$ is contained in the set
\[
\overline{B\setminus A}=\overline{(Y\setminus X)\cap\supp(\nu_T)}.
\]
Thus, the spectrum of $pTp$ is contained in the support of its Brown measure.
Since the opposite inclusion holds for all operators, the two sets are equal.
Thus, $pTp$ has full spectral distribution.
\end{proof}

\begin{prop}\label{prop:totdiscBoreldec}
Suppose $T\in\Mcal$ has totally disconnected spectrum.
Then $T$ is Borel decomposable.
\end{prop}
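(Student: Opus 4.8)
The plan is to verify the criterion of Proposition~\ref{prop:BoreldecAB}: I must show that for all Borel sets $A\subseteq B$ in $\Cpx$, the spectrum of $(P(T,B)-P(T,A))T(P(T,B)-P(T,A))$, computed in the compression by $p:=P(T,B)-P(T,A)$, is contained in $\overline{B\setminus A}$. By Lemma~\ref{lem:PBA} applied to the decomposable operator $T$ — note that $T$ is decomposable since its spectrum is totally disconnected, hence for example via the spectral capacity built from the clopen decomposition of $\sigma(T)$, or directly from Proposition~\ref{prop:disks} — we already know
\[
\sigma\big(pTp\big)\subseteq\overline{B}\cap\overline{\Cpx\setminus A}.
\]
So the task reduces to upgrading this from $\overline{B}\cap\overline{\Cpx\setminus A}$ to the smaller set $\overline{B\setminus A}$, using total disconnectedness.

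The first step is to reduce to the case where $A$ and $B$ are relatively clopen in $\sigma(T)$. Since $\sigma(T)$ is totally disconnected and compact, it has a basis of clopen sets; and by Theorem~\ref{thm:HSprojLattice} and Theorem~\ref{thm:hsproj}, the Haagerup--Schultz projections depend on $A,B$ only through $\nu_T(A),\nu_T(B)$ up to the structure we need — more precisely, replacing $A$ by $A\cap\supp(\nu_T)$ and $B$ by $B\cap\supp(\nu_T)$ does not change $P(T,A),P(T,B)$ (as in the proof of Proposition~\ref{prop:BoreldecAB}), and $\supp(\nu_T)\subseteq\sigma(T)$ is itself totally disconnected. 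The key point of this reduction is that it suffices to prove the inclusion for $A,B$ ranging over a sufficiently rich family: indeed, for general Borel $A\subseteq B$ one can find decreasing sequences of clopen sets $A_n\downarrow A'$ and $B_n\downarrow B'$ (with $A'=A\cap\supp\nu_T$, etc.) with $A'\subseteq B'$, and then use $P(T,A_n)\to P(T,A')$, $P(T,B_n)\to P(T,B')$ in strong operator topology (from Theorem~\ref{thm:HSprojLattice}) together with the fact that $\bigcap_n \overline{B_n\setminus A_n}$ can be arranged to equal $\overline{B'\setminus A'}$ when the clopen sets are chosen carefully around $\supp(\nu_T)$. Handling the spectral continuity here requires care, since spectrum is only upper semicontinuous under s.o.t.\ limits of compressions; the cleaner route is probably to prove directly that for \emph{clopen} $A\subseteq B$ in $\sigma(T)$ one has $\overline{B\setminus A}\supseteq\sigma(pTp)$ and argue the general case by a separate approximation.

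The main step, for clopen sets, is this: if $B$ is clopen in $\sigma(T)$, then $P(T,B)$ is exactly the Riesz idempotent / spectral projection of $T$ associated with the clopen set $B$ — this follows because $T$ is decomposable, $B$ is clopen so $\Hc_T(B)=\Hc_T(\overline B)$ is a closed hyperinvariant subspace on which $T$ acts with spectrum $\subseteq B$ and on whose complement $T$ acts with spectrum in $\overline{\sigma(T)\setminus B}=\sigma(T)\setminus B$ (Proposition~\ref{decompeqcond}\ref{it:dec1-P} and clopenness), and by uniqueness in Theorem~\ref{thm:hsproj} this projection coincides with $P(T,B)$. Granting this, $p=P(T,B)-P(T,A)$ is the spectral projection of $T$ for the clopen set $B\setminus A$, and $pTp$ (in $p\Mcal p$) has spectrum exactly $\sigma(T)\cap(B\setminus A)$, which is trivially contained in $\overline{B\setminus A}$.

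I expect the main obstacle to be the approximation argument passing from clopen $A,B$ to arbitrary Borel $A,B$: one needs the right-hand side $\overline{B\setminus A}$ to behave well under the approximation while controlling the spectrum of the compression under s.o.t.\ convergence of the compressing projections. A robust way around the semicontinuity issue is to observe that once Borel decomposability is known for all pairs of clopen sets of $\sigma(T)$, one can instead directly invoke Proposition~\ref{prop:BoreldecAB} in the contrapositive direction combined with Lemma~\ref{lem:PBA}: from Lemma~\ref{lem:PBA} we have $\sigma(pTp)\subseteq\overline B\cap\overline{\Cpx\setminus A}$ for \emph{all} Borel $A\subseteq B$, and the extra input we need is only that $\sigma(pTp)$ is totally disconnected (being contained in $\sigma(T)$ up to the point $0$, by Lemma~\ref{lem:utspec}-type reasoning), so that $\sigma(pTp)$ equals $\supp(\nu_{pTp})$ by Proposition~\ref{prop:totdiscfsd} applied inside $p\Mcal p$ (with Lemma~\ref{prop:pTp} to pass between $\Mcal$ and $p\Mcal p$); and $\supp(\nu_{pTp})=\supp\big(\nu_T\restrict_{B\setminus A}\big)^{-}\subseteq\overline{B\setminus A}$ as computed in the proof of Proposition~\ref{prop:BoreldecAB}. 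This last line of argument in fact bypasses the clopen reduction entirely and is the approach I would write up: the heart is simply that compressions of the form $pTp$ inherit total disconnectedness of spectrum, hence inherit full spectral distribution by Proposition~\ref{prop:totdiscfsd}, and full spectral distribution of every such $pTp$ is exactly Borel decomposability.
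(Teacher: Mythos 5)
Your final paragraph lands on exactly the paper's argument: $\sigma(pTp)$ is a closed subset of $\sigma(T)$ (via Lemma~\ref{lem:utspec} and the $3\times3$ upper-triangular form of $T$ with respect to $P(T,X)$, $p$, $1-P(T,Y)$), hence totally disconnected, hence $pTp$ has full spectral distribution by Proposition~\ref{prop:totdiscfsd}, which is precisely what Borel decomposability requires. The earlier detours through clopen reductions, Lemma~\ref{lem:PBA}, and approximation under s.o.t.\ limits are unnecessary and could simply be deleted.
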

\begin{proof}
Suppose $X$ and $Y$ are Borel subsets of $\Cpx$ such that $\nu_T(X)<\nu_T(Y)$.
Since $P(T,X)$ and $P(T,Y)$ are $T$-invariant projections, we may write $T$ as an upper triangular $3\times 3$ matrix with respect to the projections
$P(T,X)$, $p:=P(T,Y)-P(T,X)$ and $1-P(T,Y)$ and the operator $pTp$ appears as a diagonal entry.
Thus, by Lemma~\ref{lem:utspec}, the spectrum of $pTp$ is closed subset of $\sigma(T)$ and is, therefore, totally disconnected.
By Proposition~\ref{prop:totdiscfsd}, $pTp$ has full spectral distribution.
\end{proof}

\begin{prop}\label{prop:DTBoreldec}
If $Z\in\Mcal$ is a DT-operator, then $Z$ is Borel decomposable.
\end{prop}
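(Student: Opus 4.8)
The plan is to verify Borel decomposability directly from Definition~\ref{def:fssdecomp}. Fix Borel subsets $X\subseteq Y$ of $\Cpx$ with $\nu_Z(X)<\nu_Z(Y)$, set $p=P(Z,Y)-P(Z,X)$, and note $\tau(p)=\nu_Z(Y)-\nu_Z(X)>0$. I would show that $pZp$, viewed in $p\Mcal p$ with the trace renormalized by $\tau(p)^{-1}$, is again a DT-operator. Proposition~\ref{prop:DTfsd} then yields that $pZp$ has full spectral distribution (in $p\Mcal p$, hence also in $\Mcal$ by Lemma~\ref{prop:pTp}), and since $X,Y$ are arbitrary this is precisely what Definition~\ref{def:fssdecomp} requires.

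The substantive ingredient, to be drawn from the structure theory of DT-operators in~\cite{DH04}, is a \emph{compression-stability} property of the DT class: if $W$ is a DT-operator and $r$ is either a Haagerup--Schultz projection $P(W,C)$ of $W$ or its complement $1-P(W,C)$, with $\tau(r)>0$, then $rWr$, as an element of $r\Mcal r$ with renormalized trace, is a DT-operator. The ``complement'' case reduces to the first by passing to $W^*$: the DT class is closed under adjoints (and under scalar multiples and trace renormalization), and by Lemma~\ref{lem:basicHS}\ref{it:PT*} the projection $1-P(W,C)$ is a Haagerup--Schultz projection of $W^*$, namely $P(W^*,(\Cpx\setminus C)^*)$.

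Granting this, the assembly is short. Put $Q=P(Z,Y)$; since $\nu_Z(Y)>0$, compression stability with $r=Q$ shows that $W:=QZQ=ZQ$ is a DT-operator in $Q\Mcal Q$. Because $P(Z,X)\le P(Z,Y)=Q$, Theorem~\ref{thm:Pcompress} identifies $P(Z,X)$ with the Haagerup--Schultz projection $P^{(Q)}(W,X)$ computed inside $Q\Mcal Q$, so $p=Q-P^{(Q)}(W,X)$ is the complement within $Q\Mcal Q$ of a Haagerup--Schultz projection of $W$. A second application of compression stability, to $W$ with $r=p$, gives that $pZp=pWp$ is a DT-operator, as needed. (Alternatively, having Proposition~\ref{prop:BoreldecAB} available, one could instead verify the inclusion~\eqref{eq:sigPTAB} using that the Brown measure of $pZp$ is the renormalized restriction of $\nu_Z$ to $Y\setminus X$.)

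The main obstacle is making the compression-stability statement precise. In~\cite{DH04} the relevant facts are expressed through the canonical upper-triangular model of a DT$(\mu,c)$-operator, and I expect they are recorded explicitly only for the Haagerup--Schultz projections associated to disks or half-planes, or to the model's canonical filtration; so the work is to show that $P(W,C)$ for an arbitrary Borel set $C$ is compatible with that upper-triangular structure and that excising the corresponding corner returns a DT-operator with the expected parameters (up to the scalar and trace renormalization noted above). The auxiliary closure properties of the DT class are routine but should be stated for the record.
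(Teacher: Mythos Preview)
Your outline is essentially the paper's proof: reduce to showing $pZp$ is a DT-operator and then invoke Proposition~\ref{prop:DTfsd}. You also correctly isolate the only nontrivial step, the compression-stability of the DT class under Haagerup--Schultz projections for \emph{arbitrary} Borel sets, and you are right to worry that~\cite{DH04} may not record it in that generality.

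The paper resolves this obstacle cleanly, and the missing bridge is decomposability. Theorem~5.8 of~\cite{DH04} shows that every DT-operator is decomposable, while Theorem~5.4 of~\cite{DH04} gives exactly the compression-stability you want, but phrased for local spectral subspaces rather than for Haagerup--Schultz projections. Proposition~\ref{prop:HSdecomp} then identifies the two: for a decomposable operator, $P(Z,B)$ is the projection onto the closure of $\Hc_Z(B)$ for every Borel set $B$. So there is no extra work to handle arbitrary Borel sets; decomposability already puts you in the setting of~\cite{DH04}. The second step (from $ZQ$ to $pZp$) proceeds identically, since $ZQ$ is again DT and hence decomposable, so Proposition~\ref{prop:HSdecomp} applies once more before the second invocation of Theorem~5.4. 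Your adjoint reduction for the complement case is a valid alternative route, but it is not needed in the paper's argument.
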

\begin{proof}
In the notation of~\cite{DH04}, $Z$ is a $\DT(\nu_Z,c)$-operator for some $c>0$, where $\nu_Z$ is the Brown measure of $Z$
(see Corollary~5.5 of~\cite{DH04}).
By Theorem~5.8 of~\cite{DH04}, $Z$ is decomposable.
Suppose $X$ and $Y$ are Borel sets with $\nu_Z(X)<\nu_Z(Y)$ and let $p=P(Z,Y)-P(Z,X)$.
We must show that $pZp$ has full spectral distribution.
We will see that $pZp$ is itself a DT-operator with respect to the renormalized trace $\tau(p)^{-1}\tau|_{p\Mcal p}$,
which, by Proposition~\ref{prop:DTfsd}, suffices.
Let $Q=P(T,Y)$.
Using Theorem~5.4 of~\cite{DH04} and Proposition~\ref{prop:HSdecomp}, we have that $QZQ=ZQ$ is a DT-operator, with respect to the renormalized
trace $\tau(Q)^{-1}\tau|_{Q\Mcal Q}$.
But, by Theorem~\ref{thm:Pcompress}, 
$P^{(Q)}(ZQ,X)=P(T,X)$, so again applying Theorem~5.4 of~\cite{DH04} and Proposition~\ref{prop:HSdecomp}, we have that
$pZp=(Q-P^{(Q)}(ZQ,X))Z(Q-P^{(Q)}(ZQ,X))$ is a DT-operator, as required.
\end{proof}

\section{Main theorem and related results}
\label{sec:mainthm}

We now turn to notation and preliminary results for the proof of our main theorem (Theorem~\ref{thm:decqn}).
When $\psi$ is a continuous spectral ordering for $T\in\Mcal$, we will use the notation for $0\le a<b\le1$:
\begin{align*}
R^\psi_{[a,b]}&=P(T,\psi([0,b]))-P(T,\psi([0,a))), \\
R^\psi_{(a,b]}&=P(T,\psi([0,b]))-P(T,\psi([0,a])), \\
R^\psi_{(a,b)}&=P(T,\psi([0,b)))-P(T,\psi([0,a])), \\
R^\psi_{[a,b)}&=P(T,\psi([0,b)))-P(T,\psi([0,a))).
\end{align*}
We may write simply $R_{[a,b]}$ instead of $R^\psi_{[a,b]}$, {\em etc.}, if the choice of $\psi$ is clear from context.
Moreover, for any interval $I$ in $\Reals$, we will let $R_I=R_{I\cap[0,1]}$.

\begin{lem}\label{lem:QNspec}
Let $T\in\Mcal$.
If $T=N+Q$ is an upper-triangular form arising from a continuous spectral ordering $\psi$ and if $Q$ is quasinilpotent,
then $T$ has full spectral distribution.
\end{lem}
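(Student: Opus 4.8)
The plan is to show that $\sigma(T) \subseteq \supp(\nu_T)$; the reverse inclusion always holds (it is built into the definition of full spectral distribution via Proposition \ref{prop:concomp} and is a general fact about Brown measure), so this suffices. The idea is to use the quasinilpotence of $Q$ to control the spectrum of $T$ through a telescoping partition of the unit interval by the Haagerup--Schultz projections attached to $\psi$, together with the triangular-form decomposition $T = N + Q$ where $N = \int_\Cpx z\,dE(z)$ and $E(\psi([0,t])) = P(T,\psi([0,t]))$.

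First I would fix $\lambda \notin \supp(\nu_T)$ and show $\lambda \notin \sigma(T)$. Choose $\eps > 0$ with $\overline{B(\lambda,\eps)} \cap \supp(\nu_T) = \emptyset$. Because $\psi$ is continuous on the compact interval $[0,1]$, it is uniformly continuous, so I can pick a partition $0 = t_0 < t_1 < \cdots < t_m = 1$ fine enough that $\psi$ varies by less than $\eps/2$ (say) on each subinterval $[t_{k-1},t_k]$. Set $p_k = P(T,\psi([0,t_k]))$, so $0 = p_0 \le p_1 \le \cdots \le p_m = 1$, and let $r_k = p_k - p_{k-1} = R_{(t_{k-1},t_k]}$. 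With respect to this chain, $T$ is block upper triangular: the $T$-invariance of each $p_k$ gives that the strictly-lower blocks vanish, and by Lemma \ref{lem:utspec} (applied inductively, or rather its evident $m$-fold analogue), $\sigma(T) = \bigcup_{k=1}^m \sigma^{(r_k)}(r_k T r_k)$, the spectra computed in the compressions $r_k \Mcal r_k$. So it is enough to show $\lambda \notin \sigma^{(r_k)}(r_k T r_k)$ for each $k$ with $r_k \ne 0$.

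Now fix such a $k$. Write $r_k T r_k = r_k N r_k + r_k Q r_k$ inside $r_k \Mcal r_k$. The key points are: (1) $r_k N r_k = E(\psi([0,t_k]) \setminus \psi([0,t_{k-1}])) \cdot (\text{something})$ — more precisely, since $N$ is the spectral integral of $z$ against $E$ and $r_k = E(\psi((t_{k-1},t_k]))$ roughly speaking, $r_k N r_k$ is a normal operator whose spectrum lies in the closure of $\psi([t_{k-1},t_k])$, a set of diameter at most $\eps/2$ sitting over a piece of (the closure of) $\operatorname{range}(\psi) \supseteq \supp(\nu_T)$; in particular $\sigma^{(r_k)}(r_k N r_k)$ is at distance $\ge \eps/2$ from $\lambda$, because $\psi([t_{k-1},t_k])$ meets $\overline{B(\lambda,\eps/2)}$ only if it meets a neighborhood of... actually the cleanest route: the Brown measure of $r_k T r_k$ is the renormalized restriction of $\nu_T$ to $\psi([0,t_k]) \setminus \psi([0,t_{k-1}])$ by Theorem \ref{thm:brmeassplit}, hence is supported off $\overline{B(\lambda,\eps)}$; and (2) $r_k Q r_k$ is quasinilpotent — this follows from Corollary \ref{cor:utquasi}(i) applied twice (compress by $p_k$, which is $Q$-invariant since it is $T$-invariant and $N \in \Dc$ commutes with it appropriately, then compress by the co-invariant $r_k$ inside $p_k \Mcal p_k$).

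The main obstacle, and the step requiring the most care, is combining (1) and (2): from "$r_k N r_k$ is normal with spectrum bounded away from $\lambda$" and "$r_k Q r_k$ is quasinilpotent" one wants to conclude that $r_k T r_k - \lambda$ is invertible. This is not automatic for a sum of a normal and a quasinilpotent operator in general, but here we have the extra structure that $r_k Q r_k$ is the strictly-upper part relative to a further refinement of the chain, i.e.\ $r_k N r_k$ and $r_k Q r_k$ are themselves in triangular position inside $r_k \Mcal r_k$ (subdivide $[t_{k-1},t_k]$ further). Rather than fight this, the efficient argument is: subdivide the original partition so finely that on each block the normal part $r_k N r_k$ has spectrum within $\eps/2$ of a single point $\mu_k \in \overline{\operatorname{range}\psi}$, with $|\mu_k - \lambda| \ge \eps$; then $r_k T r_k - \lambda = (r_k N r_k - \mu_k) + (\mu_k - \lambda) + r_k Q r_k$, and one estimates via the spectral radius: $r_k Q r_k$ quasinilpotent means $(r_k Q r_k - (\lambda - \mu_k) - (r_k N r_k - \mu_k))$... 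I would instead invoke the general principle that if $S = D + W$ with $D$ normal, $W$ quasinilpotent, $DW$ and $WD$ suitably triangular so that $\sigma(S) = \sigma(D)$ — indeed Lemma \ref{lem:utspec} applied to the finer chain shows directly that $\sigma^{(r_k)}(r_k T r_k)$ is a subset of $\overline{\psi([t_{k-1},t_k])}$ once we know each diagonal block of the refinement contributes a spectrum in that small set, using that a single diagonal block $r' T r'$ with $r'$ minimal in the refined chain has $r' N r'$ a scalar plus quasinilpotent $r' Q r'$, hence spectrum a single point. So the real content is iterating Lemma \ref{lem:utspec} down to blocks on which $N$ is scalar, which is achieved by choosing the partition of $[0,1]$ fine enough relative to $\eps$; the quasinilpotence of $Q$ enters exactly to kill the off-diagonal contribution at the bottom level via Corollary \ref{cor:utquasi}. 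Assembling: $\sigma(T) \subseteq \bigcup_k \overline{\psi([t_{k-1},t_k])} \subseteq$ an $\eps$-neighborhood of $\overline{\operatorname{range}\psi}$, and letting the partition refine, $\sigma(T) \subseteq \overline{\operatorname{range}\psi}$; but more to the point, each block's spectrum lies within $\eps$ of $\supp \nu_T$ (since $\psi([t_{k-1},t_k])$ meets $\psi([0,t_k])\setminus\psi([0,t_{k-1}))$, which carries positive $\nu_T$-mass by $\nu_T(\psi([0,t_k])) = \tau(p_k)$ being strictly increasing on the support-relevant indices), giving $\lambda \notin \sigma(T)$ and hence $\sigma(T) \subseteq \supp(\nu_T)$.
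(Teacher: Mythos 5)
Your high-level strategy---show that every $\lambda\notin\supp(\nu_T)$ is outside $\sigma(T)$ by chopping $T$ into blocks along a fine chain $r_k=R_{(t_{k-1},t_k]}$---is a natural one, and several of your observations along the way are correct: the identity $\sigma(T)=\bigcup_k\sigma^{(r_k)}(r_kTr_k)$ via iterated use of Lemma~\ref{lem:utspec}, the quasinilpotence of $r_kQr_k$ via Corollary~\ref{cor:utquasi}, and the remark that $\psi([0,t_k])\setminus\psi([0,t_{k-1}])\subseteq\psi([t_{k-1},t_k])$ carries positive $\nu_T$-mass whenever $r_k\ne 0$, so each nonempty block sits within $\eps$ of $\supp(\nu_T)$. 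The gap is the step you yourself flag and then try three different ways to patch: going from ``$r_kNr_k$ is normal with spectrum in a small disk around $\mu_k$ and $r_kQr_k$ is quasinilpotent'' to ``$\lambda\notin\sigma^{(r_k)}(r_kTr_k)$.'' Quasinilpotence gives spectral radius zero, not small norm, so there is no Neumann-series perturbation argument; and your fallback of ``refine the partition until $N$ is scalar on each block'' never terminates---$N$ is a genuine spectral integral along the continuous chain, so no finite refinement makes it scalar on the pieces, and an infinite refinement is just the lemma again, applied to smaller corners. In effect the reduction to blocks reproduces the original problem at a smaller scale without ever making progress.

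The paper's proof sidesteps this by working globally rather than blockwise. It fixes $b\in\sigma(T)$, locates the critical level $a=\inf\{t:b\in\sigma(R_{[0,t]}TR_{[0,t]})\}$, and shows $b$ persists in $\sigma(R_{(a-\eps,a+\eps)}TR_{(a-\eps,a+\eps)})$ for every $\eps$; continuity of $\psi$ then forces $\psi(a)\in\sigma(N)=\supp(\nu_T)$. The quasinilpotence of $Q$ is used not to invert a block, but to invert the \emph{global} operator $\psi(a)-b+Q$ when $\psi(a)\ne b$; because $Q$ is triangular with respect to the chain, the middle diagonal block of $(\psi(a)-b+Q)^{-1}$ is exactly the inverse of $(\psi(a)-b)R_{(a-\eps,a+\eps)}+R_{(a-\eps,a+\eps)}QR_{(a-\eps,a+\eps)}$ in the corner, and the approximate eigenvector for $b$ forces that inverse to have norm at least $\delta^{-1}$ for arbitrarily small $\delta$. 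This contradicts the fixed norm of $(\psi(a)-b+Q)^{-1}$. The crucial device you are missing is precisely this pivot from ``try to invert each small block of $T-\lambda$'' to ``invert the whole of $\psi(a)-b+Q$ once and read off its blocks''; that is what converts quasinilpotence of $Q$ into a usable quantitative statement.
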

\begin{proof}
In any upper triangular form we have $\sigma(N)=\supp(\nu_T)\subseteq\sigma(T)$.
Let $b\in\sigma(T)$.
Let
\[
a=\inf\{t\in[0,1]\mid b\in\sigma(R_{[0,t]}TR_{[0,t]})\},
\]
where the spectrum is computed in $R_{[0,t]}\Mcal R_{[0,t]}$.
Note that, since $R_{[0,t]}$ is a $T$-invariant projection, by Lemma~\ref{lem:utspec},
the set $\sigma(R_{[0,t]}TR_{[0,t]})$ increases as $t$ increases.
We claim that
\begin{equation}\label{eq:bsig}
\forall\eps>0\qquad b\in\sigma(R_{(a-\eps,a+\eps)}TR_{(a-\eps,a+\eps)}).
\end{equation}
Indeed, by choice of $a$ and using Lemma~\ref{lem:utspec}, we have
\[
b\in\sigma(R_{[0,a+\frac\eps2]}TR_{[0,a+\frac\eps2]})\subseteq\sigma(R_{[0,a+\eps)}TR_{[0,a+\eps)}),
\qquad
b\notin\sigma(R_{[0,a-\eps]}TR_{[0,a-\eps]})
\]
Thus, by applying Lemma~\ref{lem:utspec} again, the claim is proved.
In particular, we have $R_{(a-\eps,a+\eps)}\ne0$.
Since $R_{(a-\eps,a+\eps)}$ is a subprojection of the spectral projection of $N$ for the set $\psi((a-\eps,a+\eps)\cap[0,1])$
and since $\psi$ is continuous,
we have that $\sigma(N)$ meets
\[
\bigcap_{\eps>0}\psi((a-\eps,a+\eps)\cap[0,1])=\{\psi(a)\}.
\]
Namely, we have $\psi(a)\in\sigma(N)=\supp(\nu_T)$.

We will show $\psi(a)=b$, which will complete the proof.
Suppose, for contradiction, $\psi(a)\ne b$.
Then, since $Q$ is quasinilpotent, the operator $\psi(a)-b+Q$ is invertible.
Let $\delta>0$.
Let $\eps>0$ be such that
\begin{equation}\label{eq:psia+}
\psi([0,1]\cap(a-\eps,a+\eps))\subseteq\psi(a)+\frac{\delta}{2}\mathbb{D}.
\end{equation}
By~\eqref{eq:bsig}, there exists
a unit vector $x\in R_{(a-\eps,a+\eps)}\mathcal{H}$ such that 
\begin{equation}\label{eq:RTx}
\Vert R_{(a-\eps,a+\eps)}TR_{(a-\eps,a+\eps)}x-bx\Vert_\mathcal{H} < \frac{\delta}{2}.
\end{equation}
Since $R_{(a-\eps,a+\eps)}$ is a subprojection of the spectral projection of $N$
for the set $\psi([0,1]\cap(a-\eps,a+\eps))$, by
using~\eqref{eq:psia+}, we see 
\[
\|\psi(a)R_{(a-\eps,a+\eps)}-R_{(a-\eps,a+\eps)}N\|<\frac\delta 2
\]
and, by writing $T=N+Q$, also
\begin{equation}\label{eq:nmdelta2}
\Vert R_{(a-\eps,a+\eps)}TR_{(a-\eps,a+\eps)}-\psi(a)R_{(a-\eps,a+\eps)}-R_{(a-\eps,a+\eps)}QR_{(a-\eps,a+\eps)}\Vert < \frac{\delta}{2}.
\end{equation}
Combining \eqref{eq:RTx} and~\eqref{eq:nmdelta2}, we get 
\begin{equation}\label{eq:QRxdelta}
\Vert(\psi(a)-b+R_{(a-\eps,a+\eps)}QR_{(a-\eps,a+\eps)})x\Vert_{\mathcal{H}}<\delta.
\end{equation}
We may write $Q$ as a $3\times 3$ upper triangular matrix with respect to the projections $R_{[0,a-\eps]}$, $R_{(a-\eps,a+\eps)}$ and $R_{[a+\eps,1]}$ and 
likewise for $(\psi(a)-b+Q)^{-1}$.
The middle diagonal entry of this matrix is the inverse of $(\psi(a)-b)R_{(a-\eps,a+\eps)}+R_{(a-\eps,a+\eps)}QR_{(a-\eps,a+\eps)}$ on the Hilbert space
$R_{(a-\eps,a+\eps)}\mathcal{H}$.
However, by~\eqref{eq:QRxdelta}, this inverse has norm at least $\delta^{-1}$.
Thus, we get
\[
\|(\psi(a)-b+Q)^{-1}\|\ge\delta^{-1}.
\]
Since $\delta$ is arbitrary, this yields a contradiction.
\end{proof}

\begin{lem}\label{lem:disksNCP}
Let $T\in\Mcal$.
 Suppose that, writing $P_s$ for $P(T,s\overline{\mathbb{D}})$,
\begin{enumerate}[label=(\roman*),leftmargin=25pt]
\item\label{suff1} for every $s\geq 0$ such that $P_s\ne0$, the spectrum of $TP_s$ is contained in $s\overline{\mathbb{D}}$,
\item\label{suff2} for every $s>0$ such that $P_s\ne1$, the spectrum of $(1-P_s)T$, considered as an element of $(1-P_s)\Mcal (1-P_s)$,
is contained in the complement of $s\mathbb{D}$.
\end{enumerate}
Then $T$ has the norm convergence property.
\end{lem}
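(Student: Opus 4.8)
The plan is to upgrade the known strong-operator convergence to norm convergence. By Theorem~\ref{thm:hsconv} the sequence $B_n:=|T^n|^{1/n}$ converges s.o.t.\ to a positive operator $A$ whose spectral resolution is $E_A([0,s])=P(T,s\overline{\mathbb{D}})=:P_s$. Write $F_n(s):=E_{B_n}([0,s])=E_{|T^n|}([0,s^n])$. I would fix $\eps>0$, take the equally spaced partition $s_i=i\eps$ ($i=-1,0,1,\dots,k+1$ with $(k-1)\eps>\|T\|\ge\|A\|,\|B_n\|$, so $P_{s_{-1}}=0$ and $P_{s_i}=F_n(s_i)=1$ once $s_i>\|T\|$), and sandwich $B_n=\int_0^\infty(1-F_n(s))\,ds$ and $A=\int_0^\infty(1-P_s)\,ds$ between upper and lower Riemann sums over this partition. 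This reduces matters to comparing $F_n(s_i)$ with $P_{s_j}$ for indices differing by a bounded amount, so that after a telescoping sum everything comes down to the two ``approximate containment'' estimates
\[
\|(1-P_{s_{i+1}})F_n(s_i)\|\le\rho^n,\qquad \|(1-F_n(s_i))P_{s_{i-1}}\|\le\mu^n,
\]
valid for all large $n$ with constants $\rho,\mu\in(0,1)$ (depending only on the partition).

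The main obstacle is that $P_s\Hc$ is $T$-invariant but \emph{not} a reducing subspace of $|T^n|$, so the naive route of bounding $\|T^nP_s\|=\|(TP_s)^n\|$ and concluding $P_s\le E_{|T^n|}([0,\cdot])$ is false; one must instead estimate $\|T^n x\|$ directly for $x$ in the relevant spectral subspace. For the first inequality, decompose $x\in F_n(s_i)\Hc$ as $x=u+v$ with $u=P_{s_{i+1}}x$, $v=(1-P_{s_{i+1}})x$. Since $P_{s_{i+1}}$ is $T$-invariant and $1-P_{s_{i+1}}$ is $T$-coinvariant, $T^nx$ splits \emph{orthogonally} as a vector in $P_{s_{i+1}}\Hc$ plus $W^nv$, where $W=(1-P_{s_{i+1}})T$ computed in the corner; hypothesis \ref{suff2} makes $W$ invertible with $r(W^{-1})\le 1/s_{i+1}$, so $\|W^nv\|\ge(s_i+\tfrac\eps2)^n\|v\|$ eventually (the strict gap $s_{i+1}>s_i+\tfrac\eps2$ absorbs any polynomial factors from the spectral-radius formula), while $x\in F_n(s_i)\Hc$ forces $\|T^nx\|=\||T^n|x\|\le s_i^n\|x\|$; comparing the two gives $\|v\|\le\rho^n\|x\|$ with $\rho=\max_i s_i/(s_i+\tfrac\eps2)<1$. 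The second inequality is the mirror image, using hypothesis \ref{suff1}: for $w\in P_{s_{i-1}}\Hc$ one has $\|T^nw\|=\|(TP_{s_{i-1}})^nw\|\le(s_{i-1}+\tfrac\eps2)^n\|w\|$ eventually, whereas the part $b$ of $w$ lying outside $F_n(s_i)\Hc$ satisfies $\||T^n|b\|\ge s_i^n\|b\|$, forcing $\|b\|\le\mu^n\|w\|$ with $\mu=\max_i(s_{i-1}+\tfrac\eps2)/s_i<1$.

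Finally I would convert these operator-norm smallness statements into honest operator inequalities via the elementary estimate $\|q-pqp\|\le 3\|(1-p)q\|$ for projections $p,q$, which yields $q\le p+3\|(1-p)q\|\cdot1$; applied above this gives $F_n(s_i)\le P_{s_{i+1}}+3\rho^n\cdot1$ and $P_{s_{i-1}}\le F_n(s_i)+3\mu^n\cdot1$. Substituting these into the Riemann-sum sandwich and telescoping the resulting sums $\sum_i(P_{s_{i+1}}-P_{s_{i-1}})$ (which collapse to something $\le 2\cdot1$, uniformly in the partition) produces $-C\eps\cdot1\le B_n-A\le C\eps\cdot1$ for all large $n$ with an absolute constant $C$; since $\eps$ is arbitrary, $\|B_n-A\|\to0$, i.e.\ $T$ has the norm convergence property. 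Two harmless edge cases: when $s_i=0$ one needs $\ker T^n\le P(T,\{0\})$, which holds since $\ker T^n$ is $T$-invariant and $T$ restricted to it is nilpotent; and whenever a $P_{s_i}$ equals $0$ or $1$ the corresponding estimate is vacuous.
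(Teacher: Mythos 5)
Your proposal is correct, and it takes a genuinely different route from the paper's. The paper works directly with an upper triangular form $T=N+Q$ (normalizing $\|N\|\le1$), proves the two operator inequalities $|T^n|^{1/n}\le(1+\varepsilon)(2/\varepsilon)^{1/n}(|N|+\varepsilon)$ and $|N|\le(1+\varepsilon)(2/\varepsilon)^{1/n}(|T^n|^{1/n}+2\varepsilon)$, and lets $n\to\infty$; the conversion from norm estimates to operator inequalities there runs through the operator monotonicity of $t\mapsto t^{1/2n}$, and a limit $\delta\to0$ replaces your handling of the $s=0$ edge case. You instead start from the known s.o.t.\ limit $A$ of Theorem~\ref{thm:hsconv}, compare the spectral resolutions of $B_n=|T^n|^{1/n}$ and $A$ pointwise on the partition via the two vector-level ``approximate containment'' estimates, promote these to operator inequalities through the elementary projection bound $q\le p+3\|(1-p)q\|\cdot 1$, and then sandwich with Riemann sums. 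Both approaches ultimately exploit the same two phenomena — hypothesis~\ref{suff1} controls the decay of $T^n$ on $P_s\Hc$ through the spectral radius of $TP_s$, and hypothesis~\ref{suff2} controls the growth of $T^n$ transverse to $P_s\Hc$ through the spectral radius of the inverse of $(1-P_s)T$ — but your version makes this division of labor very transparent at the cost of Riemann-sum bookkeeping, while the paper's is more compact, avoids invoking Theorem~\ref{thm:hsconv}, and identifies the norm limit as $|N|$ without presupposing that a strong limit exists. Two minor remarks: the constant $3$ in $\|q-pqp\|\le 3\|(1-p)q\|$ is fine (a sharper $2$ is also available from $q-pqp=(1-p)q+pq(1-p)$, but it does not matter); and your closing observation that $\ker T^n\le P(T,\{0\})$, while true, is not actually needed — at $i=0$ your first estimate already gives $F_n(0)\le P_{s_1}$ outright because $(1-P_{s_1})T$ is invertible by hypothesis~\ref{suff2}, and at $i=0$ in the other estimate $P_{s_{-1}}=0$ makes the inequality vacuous.
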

\begin{proof} Fix $\varepsilon>0$ small and choose $m\in\mathbb{N}$ such that $\varepsilon m\in(1,2)$.
We let $T=N+Q$ be an upper triangular form of $T$ with respect to some spectral ordering.
We may without loss of generality assume $\|N\|\leq 1$.
We have
\begin{align*}
\|T^n(|N|+\varepsilon)^{-n}\|&\leq\sum_{k=0}^{m-1}\left\|T^n(|N|+\varepsilon)^{-n}\chi_{[\frac{k}{m},\frac{k+1}{m})}(|N|)\right\| \\
&\leq\sum_{k=0}^{m-1}\left(\frac{k}{m}+\varepsilon\right)^{-n}\left\|T^n\chi_{[\frac{k}{m},\frac{k+1}{m})}(|N|)\right\| \\
&\leq\sum_{k=0}^{m-1}\left(\frac{k}{m}+\varepsilon\right)^{-n}\left\|T^n\chi_{[0,\frac{k+1}{m}]}(|N|)\right\| \\
&=\sum_{k=0}^{m-1}\left(\frac{k}{m}+\varepsilon\right)^{-n}\left\|\left(TP_{\frac{k+1}m}\right)^n\right\|.
\end{align*}
By hypothesis~\ref{suff1} for large enough $n$, we have
$$\left\|T^n(|N|+\varepsilon)^{-n}\right\|\leq\sum_{k=0}^{m-1}\left(\frac{k}{m}+\varepsilon\right)^{-n}(1+\varepsilon)^n\left(\frac{k+1}{m}\right)^n
\leq m(1+\varepsilon)^n\leq\frac{2}{\varepsilon}(1+\varepsilon)^n,$$
where we have used for the second inequality $\frac1m<\varepsilon$ and for the last inequality $m<\frac2\varepsilon$.
This implies
$$(|N|+\varepsilon)^{-n}(T^*)^nT^n(|N|+\varepsilon)^{-n}\leq \frac{4}{\varepsilon^2}(1+\varepsilon)^{2n},$$
$$|T^n|^2\leq \frac{4}{\varepsilon^2}(1+\varepsilon)^{2n}(|N|+\varepsilon)^{2n}.$$
Since the mapping $t\to t^{\frac1{2n}}$ is operator monotone, it follows that
\begin{equation}\label{suff estimate from above}
|T^n|^{\frac1n}\leq (1+\varepsilon)\left(\frac{2}{\varepsilon}\right)^{\frac1n}(|N|+\varepsilon).
\end{equation}

Let $\delta>0$.
By hypothesis~\ref{suff2}, $(1-P_\delta)T$ is invertible in $(1-P_\delta)\Mcal(1-P_\delta)$.
Let $R_\delta$ denote its inverse.
We have
\begin{align}
\|(|N|-\varepsilon)_+^nR_\delta^n\|
&\leq\sum_{k=0}^{m-1}\left\|\chi_{(\frac{k}{m},\frac{k+1}{m}]}(|N|)(|N|-\varepsilon)_+^nR_\delta^n\right\| \label{eq:NR1} \displaybreak[1]\\
&\leq\sum_{k=0}^{m-1}\left(\frac{k+1}{m}-\varepsilon\right)_+^n\left\|\chi_{(\frac{k}{m},\infty)}(|N|)R_\delta^n\right\| \label{eq:NR2} \displaybreak[2]\\
&\leq\sum_{k=1}^{m-1}\left(\frac{k}{m}\right)^n\left\|(1-P_{\frac km})R_\delta^n\right\| \label{eq:NR3} \displaybreak[1]\\
&=\sum_{k=1}^{m-1}\left(\frac{k}{m}\right)^n\left\|((1-P_{\frac km})R_\delta)^n\right\|, \label{eq:NR4}
\end{align}
where for~\eqref{eq:NR1} we used $\chi_{\{0\}}(|N|)(N-\varepsilon)_+=0$ and for~\eqref{eq:NR3} we used $\frac1m<\varepsilon$.
Assume $\delta<\frac1m$, which ensures $P_\delta\le P_{\frac km}$ for all $k\ge1$.
Since the projection $(1-P_\delta)P_{\frac km}$ is $(1-P_\delta)T(1-P_\delta)$-invariant,
by Lemma~5(i) of~\cite{DSZ.a} we have,
\[
(1-P_{\frac km})R_\delta=(1-P_{\frac km})R_\delta(1-P_{\frac km}),
\]
which we used to obtain the equality~\eqref{eq:NR4},
and we also have
\begin{align*}
(1-P_{\frac km})R_\delta&=\left((1-P_{\frac km})(1-P_\delta)T(1-P_\delta)(1-P_{\frac km})\right)^{-1} \\
&=\left((1-P_{\frac km})T(1-P_{\frac km})\right)^{-1},
\end{align*}
where the inverse is taken in $(1-P_{\frac km})\Mcal(1-P_{\frac km})$.
In particular, $(1-P_{\frac km})R_\delta$ is independent of $\delta$, so long as $\delta<\frac1m$.
By hypothesis~\ref{suff2}, 
the spectrum of $(1-P_{\frac km})T$ lies in the complement of $\frac km\mathbb{D}$,
so the spectrum of $(1-P_{\frac km})R_\delta$ lies $\frac mk\overline{\mathbb{D}}$.
Thus, continuing from \eqref{eq:NR1}--\eqref{eq:NR4}, for large enough $n$ (independent of $\delta$), we get
\[
\left\|(|N|-\varepsilon)_+^nR_\delta^n\right\|
\leq\sum_{k=1}^{m-1}\left(\frac{k}{m}\right)^n(1+\varepsilon)^n\left(\frac{m}{k}\right)^n
=(m-1)(1+\varepsilon)^n\leq\frac{2}{\varepsilon}(1+\varepsilon)^n.
\]
Therefore, we have (for all sufficiently large $n$)
\[
(R_\delta^*)^n(|N|-\varepsilon)_+^{2n}R_\delta^n\leq \frac{4}{\varepsilon^2}(1+\varepsilon)^{2n}.
\]
Multiplying on the right by $R_\delta^{-n}$ and on the left by its adjoint, we get
\begin{align*}
(1-P_\delta)(|N|-\varepsilon)_+^{2n}&\leq\frac{4}{\varepsilon^2}(1+\varepsilon)^{2n}((1-P_\delta)T^*(1-P_\delta))^n((1-P_\delta)T(1-P_\delta))^n \\
&=\frac{4}{\varepsilon^2}(1+\varepsilon)^{2n}(1-P_\delta)(T^*)^n(1-P_\delta)T^n(1-P_\delta) \\
&\le\frac{4}{\varepsilon^2}(1+\varepsilon)^{2n}(1-P_\delta)(T^*)^nT^n(1-P_\delta).
\end{align*}
Letting $\delta\to0$, by strong operator convergence, we get
$$
(|N|-\varepsilon)_+^{2n}=
(1-P_0)(|N|-\varepsilon)_+^{2n}\leq\frac{4}{\varepsilon^2}(1+\varepsilon)^{2n}(1-P_0)|T^n|^2(1-P_0).$$
Clearly,
$$(1-P_0)|T^n|^2(1-P_0)=|T^n|^2+(-(T^*)^n(TP_0)^n-(P_0T^*)^nT^n+(P_0T^*)^n(TP_0)^n).$$
By hypothesis~\ref{suff1}, $TP_0$ is quasinilpotent. Hence, for sufficiently large $n$ we have
$$\|-(T^*)^n(TP_0)^n-(P_0T^*)^nT^n+(P_0T^*)^n(TP_0)^n\|\leq\varepsilon^{2n}.$$
Thus,
$$(|N|-\varepsilon)_+^{2n}\leq\frac{4}{\varepsilon^2}(1+\varepsilon)^{2n}(|T^n|^2+\varepsilon^{2n})\leq\frac{4}{\varepsilon^2}(1+\varepsilon)^{2n}(|T^n|^{\frac1n}+\varepsilon)^{2n}.$$
Since the mapping $t\to t^{\frac1{2n}}$ is operator monotone, we get
$$(|N|-\varepsilon)_+\leq (1+\varepsilon)\left(\frac{2}{\varepsilon}\right)^{\frac1n}(|T^n|^{\frac1n}+\varepsilon).$$
It follows that
\begin{equation}\label{suff estimate from below}
|N|\leq (1+\varepsilon)\left(\frac{2}{\varepsilon}\right)^{\frac1n}(|T^n|^{\frac1n}+2\varepsilon).
\end{equation}

Combining \eqref{suff estimate from above} and \eqref{suff estimate from below}, we have, for all $n$ sufficiently large,
\[
\left(\frac\varepsilon2\right)^{\frac1n}(1+\varepsilon)^{-1}|N|-2\varepsilon\leq
|T^n|^{\frac1n}\leq (1+\varepsilon)\left(\frac{2}{\varepsilon}\right)^{\frac1n}(|N|+\varepsilon).
\]
Since $\varepsilon>0$ is arbitrarily small, $T$ has the norm convergence property.
\end{proof}

\begin{thm}\label{thm:decqn}
Let $T\in\Mcal$.
Consider the following properties:
\begin{enumerate}[label=(\alph*),leftmargin=20pt]
\item\label{it:fssdec} $T$ is Borel decomposable,
\item\label{it:strdec} $T$ is strongly decomposable,
\item\label{it:Qqn} for every continuous spectral ordering of $T$, in the corresponding upper triangular form
$T=N+Q$, the operator $Q$ is quasinilpotent,
\item\label{it:dec} $T$ is decomposable,
\item\label{it:sncp} $T$ has the shifted norm convergence property,
\item\label{it:fss} $T$ has full spectral distribution.
\end{enumerate}
Then the implications \ref{it:fssdec}$\implies$\ref{it:strdec}$\implies$\ref{it:Qqn}$\implies$\ref{it:dec}$\implies$\ref{it:sncp}$\implies$\ref{it:fss}
hold.
\end{thm}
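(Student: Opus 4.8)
The plan is to establish the chain of implications one link at a time, drawing on the characterizations proved earlier in Section~\ref{sec:Boreldec} and the two technical lemmas (Lemma~\ref{lem:QNspec} and Lemma~\ref{lem:disksNCP}) just above the theorem.

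\textbf{Implication \ref{it:fssdec}$\implies$\ref{it:strdec}.} Suppose $T$ is Borel decomposable. By Proposition~\ref{prop:strdec}, it suffices to verify that for all closed $E,F\subseteq\Cpx$, the spectrum of $(P(T,F)-P(T,F\setminus E))T(P(T,F)-P(T,F\setminus E))$ is contained in $F\cap E$. By Proposition~\ref{prop:BoreldecAB}, Borel decomposability gives this spectrum contained in $\overline{F\setminus(F\setminus E)}=\overline{F\cap E}=F\cap E$, the last equality since $F\cap E$ is closed. So \ref{it:strdec} follows directly. (One should double-check that $F\setminus E$ being merely Borel, not closed, causes no trouble: Proposition~\ref{prop:BoreldecAB} is stated for arbitrary Borel $A\subseteq B$, so it applies with $A=F\setminus E$, $B=F$.)

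\textbf{Implication \ref{it:strdec}$\implies$\ref{it:Qqn}.} Fix a continuous spectral ordering $\psi$ and the resulting form $T=N+Q$. Using the notation $R^\psi_{[a,b]}$ etc.\ introduced before Lemma~\ref{lem:QNspec}, recall that $Q=T-N$ and that $N$ is the conditional expectation of $T$ onto the abelian algebra $\Dc$ generated by the projections $P(T,\psi([0,t]))$. The idea is to reduce quasinilpotence of $Q$ to a statement about compressions $R_{(a-\eps,a+\eps)}TR_{(a-\eps,a+\eps)}$ having spectrum shrinking to the single point $\psi(a)$ as $\eps\to0$. Strong decomposability, via Proposition~\ref{prop:strdec} applied with $F=\psi([0,a+\eps))$ and $E$ a small closed neighborhood, controls $\sigma(R_{(a-\eps,a+\eps)}TR_{(a-\eps,a+\eps)})$; combined with continuity of $\psi$ this forces the spectrum of each such compression into $\psi((a-\eps,a+\eps)\cap[0,1])$, whose intersection over $\eps>0$ is $\{\psi(a)\}$. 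One then partitions $[0,1]$ into many small intervals, writes $Q$ as a block upper-triangular matrix along the corresponding projections, notes each diagonal block $R_IQR_I=R_ITR_I-\psi(\cdot)R_I$ has small spectral radius (roughly the diameter of $\psi(I)$), and uses a standard estimate bounding the spectral radius of a block upper-triangular operator by the max of the diagonal spectral radii plus a term controlled by the off-diagonal norms and the number of blocks; refining the partition and using uniform continuity of $\psi$ drives the spectral radius of $Q$ to $0$. This is the step I expect to be the main obstacle: making the block-upper-triangular spectral-radius estimate rigorous in the von Neumann algebra setting (where the ``number of blocks'' interacts with operator norms) and correctly invoking strong decomposability to get uniform control, rather than the weaker pointwise statement.

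\textbf{Implications \ref{it:Qqn}$\implies$\ref{it:dec}$\implies$\ref{it:sncp}$\implies$\ref{it:fss}.} For \ref{it:Qqn}$\implies$\ref{it:dec}: assuming $Q$ is quasinilpotent in every continuous upper-triangular form, I verify criterion~\ref{it:sigmaGbdy} of Proposition~\ref{prop:disks}. Given an open disk $D$ with $\nu_T(\partial D)=0$, choose a continuous spectral ordering $\psi$ whose parametrization reaches $\barD$ at some time $t_0$ (i.e.\ $\psi([0,t_0])$ captures $\supp(\nu_T)\cap\barD$ and $P(T,\barD)=R_{[0,t_0]}$); then $TP(T,\barD)=R_{[0,t_0]}TR_{[0,t_0]}=R_{[0,t_0]}(N+Q)R_{[0,t_0]}$, and since the $N$-part has spectrum inside $\psi([0,t_0])\subseteq\barD$ and the $Q$-part is quasinilpotent, Corollary~\ref{cor:utquasi}-type reasoning (or directly Lemma~\ref{lem:utspec} plus quasinilpotence) gives $\sigma(TP(T,\barD))\subseteq\barD$, which is~\eqref{eq:sigG}; the other three inclusions~\eqref{eq:sigGc}--\eqref{eq:sig1-Gc} follow symmetrically by also considering $\Cpx\setminus D$ and using the co-invariant analogue. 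For \ref{it:dec}$\implies$\ref{it:sncp}: fix $\lambda\in\Cpx$; by Lemma~\ref{lem:basicHS}\ref{iii}, $P(T-\lambda,s\barD)=P(T,\lambda+s\barD)$, and decomposability of $T$ (hence of $T-\lambda$) via Proposition~\ref{prop:disks}\ref{it:sigmaF} gives exactly hypotheses~\ref{suff1} and~\ref{suff2} of Lemma~\ref{lem:disksNCP} for the operator $T-\lambda$ (with $P_s=P(T-\lambda,s\barD)$), so Lemma~\ref{lem:disksNCP} yields that $T-\lambda$ has the norm convergence property; since $\lambda$ was arbitrary, $T$ has the shifted norm convergence property. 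Finally \ref{it:sncp}$\implies$\ref{it:fss}: if $|\,(T-\lambda)^n|^{1/n}$ converges in norm, then by Theorem~\ref{thm:hsconv} its norm limit is the s.o.t.-limit, whose smallest $r$ with spectral projection $P(T-\lambda,r\barD)=1$ equals both the spectral radius of $T-\lambda$ and $\sup\{|z|:z\in\supp(\nu_{T-\lambda})\}$; running this for $\lambda$ outside $\sigma(T)$ and for $\lambda$ on the boundary of $\sigma(T)$, together with Proposition~\ref{prop:concomp}, pins every point of $\sigma(T)$ into $\supp(\nu_T)$, giving full spectral distribution. (Here the cleanest route is: norm convergence of $|(T-\lambda)^n|^{1/n}$ forces, for each $\lambda$, that the spectral radius of $T-\lambda$ equals the spectral radius of its ``Brown-measure part,'' i.e.\ $\max_{z\in\supp\nu_T}|z-\lambda|=\max_{z\in\sigma(T)}|z-\lambda|$; since this holds for every $\lambda$, the two compact sets $\supp(\nu_T)\subseteq\sigma(T)$ have identical support functions, hence identical closed convex hulls, and a short additional argument using Proposition~\ref{prop:concomp} upgrades this to $\supp(\nu_T)=\sigma(T)$.)
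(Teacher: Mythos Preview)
Your handling of \ref{it:fssdec}$\implies$\ref{it:strdec} and \ref{it:dec}$\implies$\ref{it:sncp} matches the paper's, and you are right that Lemmas~\ref{lem:QNspec} and~\ref{lem:disksNCP} are the key technical inputs. However, two of your implications have genuine gaps.

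For \ref{it:strdec}$\implies$\ref{it:Qqn}, your direct block-triangular estimate breaks at the assertion that each diagonal block $R_IQR_I$ has spectral radius roughly $\operatorname{diam}\psi(I)$. Strong decomposability (with the $\Rt$ device) does give $\sigma(R_ITR_I)\subseteq\psi(I)$, so $R_ITR_I-\psi(c)R_I$ has small spectral radius. But $R_IQR_I=R_ITR_I-R_IN$ differs from this by the small-\emph{norm} normal operator $R_IN-\psi(c)R_I$, and spectral radius is not continuous under norm perturbation (e.g.\ $\left[\begin{smallmatrix}0&M\\0&0\end{smallmatrix}\right]+\left[\begin{smallmatrix}0&0\\\eps&0\end{smallmatrix}\right]$ has spectral radius $\sqrt{M\eps}$). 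Also, for a block upper-triangular operator the spectrum is \emph{exactly} the union of diagonal spectra (Lemma~\ref{lem:utspec}); there is no ``plus a term.'' The paper instead argues by contradiction: assume $1\in\sigma(Q)$, locate the infimal $a$ with $1\in\sigma(R_{[0,t]}QR_{[0,t]})$ for $t$ just above $a$, deduce $1\in\sigma(R_{[a-\eps,a+\eps]}QR_{[a-\eps,a+\eps]})$ for all $\eps$, fix a small $\eps_0$ with $\psi([a-\eps_0,a+\eps_0])\subseteq\psi(a)+\tfrac12\mathbb{D}$, embed $R_{[a-\eps,a+\eps]}$ as the middle block of a $3\times3$ upper-triangular picture of $R_{[a-\eps_0,a+\eps_0]}TR_{[a-\eps_0,a+\eps_0]}$, replace the middle diagonal entry $R_{[a-\eps,a+\eps]}N+R_{[a-\eps,a+\eps]}QR_{[a-\eps,a+\eps]}$ by $\psi(a)R_{[a-\eps,a+\eps]}+R_{[a-\eps,a+\eps]}QR_{[a-\eps,a+\eps]}$ (a norm perturbation of the \emph{whole} operator tending to $0$ as $\eps\to0$), and let $\eps\to0$ to force $\psi(a)+1\in\sigma(R_{[a-\eps_0,a+\eps_0]}TR_{[a-\eps_0,a+\eps_0]})\subseteq\psi(a)+\tfrac12\overline{\mathbb{D}}$, a contradiction.

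For \ref{it:sncp}$\implies$\ref{it:fss}, your farthest-point-function argument yields only equality of closed convex hulls (take $\supp(\nu_T)=\{0,1\}$ versus $\sigma(T)=[0,1]$: both give the same $\lambda\mapsto\max_z|z-\lambda|$), and Proposition~\ref{prop:concomp} does not upgrade this. The paper's route is direct and short: if $\zeta\notin\supp(\nu_T)$, then by Theorem~\ref{thm:hsconv} the s.o.t.-limit $A$ of $|(T-\zeta)^n|^{1/n}$ is invertible (its spectral projection for $[0,r]$ is $P(T-\zeta,r\overline{\mathbb{D}})=0$ for small $r$); norm convergence then makes $|(T-\zeta)^n|^{1/n}$ eventually invertible, hence $T-\zeta$ invertible, so $\zeta\notin\sigma(T)$.

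A smaller point on \ref{it:Qqn}$\implies$\ref{it:dec}: the step ``$N$-part has spectrum in $\barD$ and $Q$-part quasinilpotent, hence $\sigma(TP(T,\barD))\subseteq\barD$'' is neither Lemma~\ref{lem:utspec} nor Corollary~\ref{cor:utquasi}; it is exactly Lemma~\ref{lem:QNspec}, and one must first verify (via Theorem~\ref{thm:Pcompress}) that $NP(T,\barD)+QP(T,\barD)$ is itself an upper-triangular form of $TP(T,\barD)$ for the restricted spectral ordering, so that Lemma~\ref{lem:QNspec} applies.
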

\begin{proof}
The implication \ref{it:fssdec}$\implies$\ref{it:strdec} follows immediately, comparing Propositions~\ref{prop:strdec} and~\ref{prop:BoreldecAB}.

\smallskip
We now show \ref{it:strdec}$\implies$\ref{it:Qqn}.
Suppose $T$ is strongly decomposable, $\psi$ is a continuous spectral ordering for $T$ and $T=N+Q$ is the corresponding upper triangular form of $T$.
Suppose, for contradiction, that $Q$ is not quasinilpotent.
We may without loss of generality assume $1\in\sigma(Q)$.
Let
\[
a=\inf\{t\in[0,1]\mid 1\in\sigma(R_{[0,t]}QR_{[0,t]})\},
\]
where the spectrum is computed in $R_{[0,t]}\Mcal R_{[0,t]}$.
Note that, since $(R_{[0,t]})_{0\le t\le 1}$ is an increasing family of $Q$-invariant projections, by Lemma~\ref{lem:utspec},
the set $\sigma(R_{[0,t]}QR_{[0,t]})$ increases as $t$ increases.
We claim that
\begin{equation}\label{eq:1sig}
\forall\eps>0\qquad 1\in\sigma(R_{[a-\eps,a+\eps]}QR_{[a-\eps,a+\eps]}).
\end{equation}
Indeed, by choice of $a$, we have
\[
1\in\sigma(R_{[0,a+\eps]}QR_{[0,a+\eps]}),
\qquad
1\notin\sigma(R_{[0,a-\eps]}QR_{[0,a-\eps]})\supseteq\sigma(R_{[0,a-\eps)}QR_{[0,a-\eps)}).
\]
Thus, by applying Lemma~\ref{lem:utspec} again, the claim is proved.
In particular, we have $R_{[a-\eps,a+\eps]}\ne0$.

For notational convenience, given an interval $I$, we will understand $\psi(I)$ to mean $\psi(I\cap[0,1])$.
Since $R_{[a-\eps,a+\eps]}$ is a subprojection of the spectral projection of $N$ for the set $\psi([a-\eps,a+\eps])$ and since $\psi$ is continuous,
we have
\begin{equation}\label{eq:psiaNto0}
\lim_{\eps\searrow0}\|\psi(a)R_{[a-\eps,a+\eps]}-R_{[a-\eps,a+\eps]}N\|=0.
\end{equation}
Using~\eqref{eq:1sig} we have
\begin{equation}\label{eq:psia1sig}
\psi(a)+1\in\sigma(\psi(a)R_{[a-\eps,a+\eps]}+R_{[a-\eps,a+\eps]}QR_{[a-\eps,a+\eps]}).
\end{equation}
Since $\psi$ is continuous, we can choose $\eps_0>0$ such that
\[
\psi([a-\eps_0,a+\eps_0])\subseteq \psi(a)+\frac12\mathbb{D}.
\]
We have
\[
\psi([0,a+\eps_0])\setminus\psi([a-\eps_0,a+\eps_0])\subseteq\psi([0,a-\eps_0))\subseteq\psi([0,a+\eps_0]).
\]
Let
\[
\Rt=P\big(T,\psi([0,a+\eps_0])\big)-P\big(T,\psi([0,a+\eps_0])\setminus\psi([a-\eps_0,a+\eps_0])\big).
\]
Then
\[
R_{[a-\eps_0,a+\eps_0]}=P\big(T,\psi([0,a+\eps_0])\big)-P\big(T,\psi([0,a-\eps_0))\big)\le\Rt
\]
and 
\[
R_{[a-\eps_0,a+\eps_0]}=\big(1-P(T,\psi([0,a-\eps_0)))\big)\Rt.
\]
Note that the projections $\Rt$ and $1-P(T,\psi([0,a-\eps_0)))$ commute.
Since $1-P(T,\psi([0,a-\eps_0)))$ is $T$-coinvariant, using Lemma~\ref{lem:utspec}, we have
\[
\sigma(R_{[a-\eps_0,a+\eps_0]}TR_{[a-\eps_0,a+\eps_0]})\subseteq\sigma(\Rt T\Rt).
\]
Since $\psi([0,a+\eps_0])$ and $\psi([a-\eps_0,a+\eps_0])$ are closed in $\Cpx$, by
the hypothesis that $T$ is strongly decomposable and Proposition~\ref{prop:strdec}, we have
\begin{equation}\label{eq:siginball}
\sigma(R_{[a-\eps_0,a+\eps_0]}TR_{[a-\eps_0,a+\eps_0]})\subseteq\sigma(\Rt T\Rt)\subseteq\psi([a-\eps_0,a+\eps_0])\subseteq\psi(a)+\frac12\mathbb{D}.
\end{equation}

For all $\eps<\eps_0$, $R_{[a-\eps_0,a+\eps_0]}TR_{[a-\eps_0,a+\eps_0]}$
is upper triangular with respect to the projections $R_{[a-\eps_0,a-\eps)}$, $R_{[a-\eps,a+\eps]}$ and $R_{(a+\eps,a+\eps_0]}$
and the middle entry is
\[
R_{[a-\eps,a+\eps]}N+R_{[a-\eps,a+\eps]}QR_{[a-\eps,a+\eps]}.
\]
When we replace this middle entry with $\psi(a)R_{[a-\eps,a+\eps]}+R_{[a-\eps,a+\eps]}QR_{[a-\eps,a+\eps]}$,
using~\eqref{eq:psia1sig} and Lemma~\ref{lem:utspec}, we get
\[
\psi(a)+1\in\sigma(\psi(a)R_{[a-\eps,a+\eps]}-R_{[a-\eps,a+\eps]}N+R_{[a-\eps_0,a+\eps_0]}TR_{[a-\eps_0,a+\eps_0]}).
\]
Using~\eqref{eq:psiaNto0} and the fact that the set of invertible elements is open, by letting $\eps\to0$ we conclude
\[
\psi(a)+1\in\sigma(R_{[a-\eps_0,a+\eps_0]}TR_{[a-\eps_0,a+\eps_0]}).
\]
Now using~\eqref{eq:siginball}, we get
\[
\psi(a)+1\in\sigma(R_{[a-\eps_0,a+\eps_0]}TR_{[a-\eps_0,a+\eps_0]}\subseteq\psi(a)+\frac12\overline{\mathbb{D}},
\]
a contradiction.

\smallskip
We now prove \ref{it:Qqn}$\implies$\ref{it:dec}.
Assuming \ref{it:Qqn}, we will show that $T$ is decomposable by verifying condition~\ref{it:sigmaGbdy} of Proposition~\ref{prop:disks}.
Given an open disk $D\subseteq\Cpx$ such that $\nu_T(\partial D)=0$, we will verify that the inclusions \eqref{eq:sigG}--\eqref{eq:sig1-Gc} hold.
Choose a continous spectral ordering $\psi$ for $T$ such that
\[
\psi([0,1/2])=\barD,\qquad\psi([1/2,1])\subseteq\Cpx\setminus D
\]
and let $T=N+Q$ be the corresponding upper triangular form of $T$.
By hypothesis, $Q$ is quasinilpotent.
Recall how $N$ is constructed in~\cite{DSZ}:  it is the image of $T$ under the
conditional expectation onto the von Neumann algebra generated by the set $\{P(T,\psi([0,r]))\mid 0\le r\le 1\}$
of commuting projections.
Thus, $P(T,D)=P(T,\barD)$ is the spectral projection for $N$ of $D$ and of $\barD$.
Since the projection $P(T,\barD)$ commutes with $N$ and is invariant under $T$, it is also invariant under $Q$, and, thus,
by Corollary~\ref{cor:utquasi},
both $QP(T,\barD)$ and $(1-P(T,D))Q$ are quasinilpotent.

We will now show that the inclusion~\eqref{eq:sigG} holds.
Firstly, note that if $P(T,\barD)=0$, then there is nothing to prove, so we may assume $P(T,\barD)\ne0$.
We claim that
\begin{equation}\label{eq:TPut}
TP(T,\barD)=NP(T,\barD)+QP(T,\barD)
\end{equation}
is the upper triangular form of $S:=TP(T,\barD)$ corresponding to the continuous spectral ordering arising from the restriction of the function $\psi$ to $[0,1/2]$,
where $S$ is treated as an element of the compressed algebra $P(T,\barD)\Mcal P(T,\barD)$.
Note that $\nu_S$ is the renormalized restriction of $\nu_T$ to $\barD$.
Using Theorem~\ref{thm:Pcompress} and the lattice properties of Theorem~\ref{thm:HSprojLattice}, we have $P(S,X)=P(T,X\cap\barD)$
for every Borel set $X\subseteq\Cpx$, where $P(S,X)$ is computed in the compression of $\Mcal$.
In particular, we have $P(S,\psi([0,t]))=P(T,\psi([0,t]))$ for every $t\in[0,1/2]$ and we see that $NP(T,\barD)$ is the image of $S$ under the trace preserving
conditional
expectation from $P(T,\barD)\Mcal P(T,\barD)$ onto the von Neumann algebra generated by $\{P(S,\psi([0,r])\mid 0\le r\le 1/2\}$, and the claim
about~\eqref{eq:TPut} is proved.
Thus, since $QP(T,\barD)$ is quasinilpotent, using Lemma~\ref{lem:QNspec}, we conclude
\[
\sigma(TP(T,\barD))=\sigma(NP(T,\barD))\subseteq\barD,
\]
and~\eqref{eq:sigG} is proved.

We will now prove that~\eqref{eq:sig1-G} holds.
If $P(T,\barD)=1$, then there is nothing to prove, so we may assume $P(T,\barD)\ne1$.
We claim that
\begin{equation}\label{eq:1-PTut}
(1-P(T,\barD))T=(1-P(T,\barD))N+(1-P(T,\barD))Q
\end{equation}
is the upper triangular form of $R:=(1-P(T,\barD))T$ corresponding to the continuous
spectral ordering arising from the restriction of the function $\psi$ to $[1/2,1]$,
where $R$ is treated as an element of the compressed algebra $(1-P(T,\barD))\Mcal(1-P(T,\barD))$.
Note that $\nu_R$ is the renormalized restriction of $\nu_T$ to $\Cpx\setminus D$.
Using Theorem~\ref{thm:Pcompress} and the lattice properties of Theorem~\ref{thm:HSprojLattice}, we have
\[
P(R,X)=P(T,\barD)\vee P(T,X)-P(T,\barD)=P(T,\barD\cup X)-P(T,\barD)
\]
for every Borel set $X\subseteq\Cpx$, where $P(R,X)$ is computed in the aforementioned compression of $\Mcal$.
In particular, we have $P(R,\psi([1/2,t]))=P(T,\psi([0,t]))-P(T,\barD)$ for every $t\in[1/2,1]$ and we see that $(1-P(T,\barD))N$ is the image of $R$ under the trace preserving
conditional
expectation from $(1-P(T,\barD))\Mcal(1-P(T,\barD))$ onto the von Neumann algebra generated by
\[
\{P(R,\psi([1/2,t]))-P(T,\barD)\mid 1/2\le t\le1\}.
\]
Thus, the claim about~\eqref{eq:1-PTut} is proved.
Since $(1-P(T,\barD))Q$ is quasinilpotent, using Lemma~\ref{lem:QNspec}, we conclude
\[
\sigma((1-P(T,\barD))T)=\sigma((1-P(T,\barD))N)\subseteq\Cpx\setminus D,
\]
and~\eqref{eq:sig1-G} is proved.

The inclusions~\eqref{eq:sigGc} and~\eqref{eq:sig1-Gc} are proved exactly analogously to the above proofs of~\eqref{eq:sigG} and~\eqref{eq:sig1-G},
but starting with a continuous spectral ordering $\psi$ for $T$, that satisfies
\[
\supp(\nu_T)\cap(\Cpx\setminus D)\subseteq\psi([0,1/2])\subseteq\Cpx\setminus D,\qquad\psi([1/2,1])=\barD.
\]
This completes the proof that $T$ is decomposable.

\smallskip
We now prove \ref{it:dec}$\implies$\ref{it:sncp}.
Suppose $T\in\Mcal$ is decomposable.
Since decomposability does not change if we add a scalar multiple of the identity to an operator, it will suffice to show that $T$ has
the norm convergence property.
This follows easily from Lemma~\ref{lem:disksNCP}.
Indeed, condition~\ref{suff1} of that lemma is immediate from decomposability of $T$, and condition~\ref{suff2} follows by the following familiar argument.
The projection $1-P(T,s\overline{\mathbb{D}})$ is a $T$-coinvariant subprojetion of $1-P(T,s\mathbb{D})$, so the spectrum of
$(1-P(T,s\overline{\mathbb{D}}))T$ is a subset of the spectrum of $(1-P(T,s\mathbb{D}))T$, and the latter is contained in $\Cpx\setminus s\mathbb{D}$,
by decomposability of $T$, using condition~\ref{it:sigmaF} of Proposition~\ref{prop:disks}.

\smallskip
We now prove \ref{it:sncp}$\implies$\ref{it:fss}.
We suppose $T\in\Mcal$ has the shifted norm convergence property
and must show $\supp(\nu_T)=\sigma(T)$.
Of course, the inclusion $\subseteq$ holds generally.
To show the reverse inclusion, suppose $\zeta\in\Cpx\setminus\supp(\nu_T)$.
Then $0\notin\supp(\nu_{T-\zeta})$ and, consequently, by Theorem~\ref{thm:hsconv}, letting $A$ be the strong operator limit as $n\to\infty$
of $|(T-\zeta)^n|^{1/n}$, $A$ is invertible.
By hypothesis, this convergence to $A$ holds also in norm topology.
Consequently, for sufficiently large $n$, $|(T-\zeta)^n|^{1/n}$ is invertible and, thus, $T-\zeta$ is invertible.
So $\zeta\notin\sigma(\nu_T)$.
\end{proof}

The proof of the following (essentially, just constructing an example) is more conveniently postponed until after Proposition~\ref{prop:Qeq}, below.
However, it is natural to state it here.
\begin{prop}\label{prop:nofimpliesa}
Consider the conditions listed in Theorem~\ref{thm:decqn}.
The implication \ref{it:fss}$\implies$\ref{it:sncp} fails to hold.
\end{prop}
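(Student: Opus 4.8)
The plan is to refute the implication \ref{it:fss}$\implies$\ref{it:sncp} by exhibiting a single element $T$ of a tracial von Neumann algebra with $\supp(\nu_T)=\sigma(T)$ that fails to have even the (unshifted) norm convergence property. Since the unshifted norm convergence property is exactly the case $\lambda=0$ of the shifted one, such a $T$ satisfies \ref{it:fss} but not \ref{it:sncp}, which is all that is required.

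For the construction I would take as building block the thin-spectrum operator furnished by Theorem~\ref{thm:thinspec}: let $Q_0$ be an s.o.t.-quasinilpotent, non-quasinilpotent element of the hyperfinite II$_1$-factor $\mathcal{R}$ whose spectrum is contained in a line segment. Being s.o.t.-quasinilpotent, $Q_0$ has Brown measure $\delta_0$, so $0\in\sigma(Q_0)$, and by Proposition~\ref{prop:concomp} the set $\sigma(Q_0)$ is connected; being contained in a segment through the origin, after multiplying $Q_0$ by a suitable unimodular scalar (which changes neither its being s.o.t.-quasinilpotent nor its being non-quasinilpotent) we may assume $\sigma(Q_0)=I$ for some nondegenerate compact interval $I\subseteq\Reals$ with $0\in I$. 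Next choose a self-adjoint $N\in\mathcal{R}$ whose spectral distribution is a probability measure with support exactly $I$ (for instance normalized Lebesgue measure on $I$), so that $\sigma(N)=\supp(\nu_N)=I$, and put $\Mcal=\mathcal{R}\oplus\mathcal{R}$ with trace $\tfrac12\tau\oplus\tfrac12\tau$ and $T=N\oplus Q_0\in\Mcal$. (One could realize $T$ inside $\mathcal{R}$ itself, but a factor is not needed.)

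It then remains to verify the two properties. The projection $p=1_{\mathcal{R}}\oplus0$ is central, hence $T$-invariant; $Tp$ and $(1-p)T$ identify with $N$ and $Q_0$, and $\tau(p)=\tfrac12$. So Lemma~\ref{lem:utspec} gives $\sigma(T)=\sigma(N)\cup\sigma(Q_0)=I$, and Theorem~\ref{thm:brmeassplit} gives $\nu_T=\tfrac12\nu_N+\tfrac12\delta_0$, whence $\supp(\nu_T)=I\cup\{0\}=I=\sigma(T)$; thus $T$ has full spectral distribution. For the failure of the norm convergence property I take $\lambda=0$ and note $|T^n|^{1/n}=|N^n|^{1/n}\oplus|Q_0^n|^{1/n}$: since $N$ is self-adjoint we have $|N^n|=|N|^n$, so $|N^n|^{1/n}=|N|$ for every $n$ and the first summand is constant, whereas $|Q_0^n|^{1/n}\to0$ in the strong operator topology (definition of s.o.t.-quasinilpotent) while $\bigl\||Q_0^n|^{1/n}\bigr\|=\|Q_0^n\|^{1/n}$ tends to the spectral radius of $Q_0$, which is strictly positive because $Q_0$ is not quasinilpotent. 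Hence $|Q_0^n|^{1/n}$, and therefore $|T^n|^{1/n}$, does not converge in norm, so $T$ does not have the norm convergence property and a fortiori not the shifted norm convergence property.

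The one genuinely nontrivial ingredient is Theorem~\ref{thm:thinspec} — the existence of an s.o.t.-quasinilpotent, non-quasinilpotent operator with thin spectrum — which is the real obstacle and is established separately; granting it, the rest is routine bookkeeping with direct sums (behaviour of spectrum and Brown measure under a central projection, via Lemma~\ref{lem:utspec} and Theorem~\ref{thm:brmeassplit}) together with the elementary spectral-radius obstruction to norm convergence of $|Q_0^n|^{1/n}$. The statement is recorded here but its proof deferred past Proposition~\ref{prop:Qeq} only because that setting packages essentially this example a little more economically; the direct-sum construction above is already self-contained given Theorem~\ref{thm:thinspec}.
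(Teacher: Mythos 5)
Your proof is correct and follows the same overall strategy as the paper: take a direct sum $T=N\oplus Q$ with $N$ normal, $Q$ s.o.t.-quasinilpotent but not quasinilpotent, and $\sigma(Q)\subseteq\supp(\nu_N)$, so that $\sigma(T)=\supp(\nu_T)$ while $|T^n|^{1/n}$ fails to converge in norm because its $Q$-component does not. Your inline justification that $|Q_0^n|^{1/n}$ cannot converge in norm (its s.o.t.\ limit is $0$ while $\||Q_0^n|^{1/n}\|=\|Q_0^n\|^{1/n}\to r(Q_0)>0$) is exactly the content of the ``clear'' equivalence \ref{it:QeqQqn}$\Leftrightarrow$\ref{it:Qncp} in Proposition~\ref{prop:Qeq}, which the paper simply cites; this explains why the paper defers the proof until after that proposition.

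The one place you diverge is the choice of $Q$, and there you make the example harder than necessary. You use the thin-spectrum operator $Q_0$ from Theorem~\ref{thm:thinspec} and call its existence ``the one genuinely nontrivial ingredient,'' but thinness of $\sigma(Q_0)$ plays no role here: all you need is \emph{some} s.o.t.-quasinilpotent, non-quasinilpotent $Q$ together with a normal $N$ whose Brown measure has support containing $\sigma(Q)$. The paper achieves this with the elementary Jordan-block direct sum $Q=\bigoplus_{n\ge1}J_n$ of~\eqref{eq:QJordan}, whose spectrum is $\overline{\mathbb{D}}$, paired with a normal $N$ whose Brown measure is atomless with support $\overline{\mathbb{D}}$ — no Toeplitz-matrix machinery required, and everything needed is already available by the end of the introduction. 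Your version is valid, but it imports Theorem~\ref{thm:thinspec}, a substantially heavier tool that the paper reserves for Example~\ref{ex}, where the thin spectrum is actually essential. If you want to keep the self-adjoint $N$ for notational simplicity you can still do so with an elementary $Q$: any s.o.t.-quasinilpotent non-quasinilpotent $Q$ whose spectrum is a disk works just as well when paired with a disk-supported normal $N$, so there is no reason to insist on a thin-spectrum $Q_0$.
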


The following questions are natural:
\begin{ques2}
Let $T\in\Mcal$.
\begin{enumerate}[label=(\roman*),leftmargin=25pt]
\item\label{it:impl} Do any other implications among the conditions \ref{it:fssdec}--\ref{it:sncp} of Theorem~\ref{thm:decqn} hold?
In particular, are the properties of being Borel decomposable, strongly decomposable and decomposable distinct for elements of
finite von Neumann algebras?
(The construction by Eschmeier~\cite{E88} of a decomposable Hilbert space operator that is not strongly decomposable is of interest.)
\item\label{it:quessncp} If $T$ has the shifted norm convergence property, must also $T^*$ have it?
(See Theorem~\ref{thm:totdisc}.)
\item\label{it:quesoneQ} If there exists a continuous spectral ordering for $T$ such that, in the corresponding Schur-type upper triangular form $T=N+Q$,
the operator $Q$ is quasinilpotent, must any of the conditions \ref{it:fssdec}--\ref{it:fss} of Theorem~\ref{thm:decqn} hold?
(See Theorem~\ref{thm:fsequivs}.)
\end{enumerate}
\end{ques2}

\begin{thm}\label{thm:totdisc}
Suppose $T\in\Mcal$ and $\supp(\nu_T)$ is totally disconnected.
Then all the conditions \ref{it:fssdec}--\ref{it:fss} of Theorem~\ref{thm:decqn} are equivalent to each other and are also equivalent to each of the following conditions:
\begin{enumerate}[label=(\alph*),leftmargin=20pt]
\setcounter{enumi}{6}
\item\label{it:totdiscspec} $\sigma(T)$ is totally disconnected,
\item\label{it:T*} $T^*$ has the shifted norm convergence property.
\end{enumerate}
\end{thm}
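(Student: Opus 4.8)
The plan is to show that, under the assumption that $\supp(\nu_T)$ is totally disconnected, the chain of implications in Theorem~\ref{thm:decqn} closes into a cycle by proving \ref{it:fss}$\implies$\ref{it:totdiscspec}$\implies$\ref{it:fssdec}, and then to slot \ref{it:T*} into the cycle. First I would observe that \ref{it:fss}$\implies$\ref{it:totdiscspec} is immediate under the hypothesis: if $T$ has full spectral distribution then $\sigma(T)=\supp(\nu_T)$, which is assumed totally disconnected. Next, \ref{it:totdiscspec}$\implies$\ref{it:fssdec} is precisely Proposition~\ref{prop:totdiscBoreldec}. Combined with the implications \ref{it:fssdec}$\implies\cdots\implies$\ref{it:fss} already proved in Theorem~\ref{thm:decqn}, this gives the equivalence of \ref{it:fssdec} through \ref{it:totdiscspec}. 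Note that here the hypothesis is on $\supp(\nu_T)$, not a priori on $\sigma(T)$, so I should remark that \ref{it:fss} forces $\sigma(T)=\supp(\nu_T)$ to be totally disconnected, making the use of Proposition~\ref{prop:totdiscBoreldec} legitimate once \ref{it:fss} is known; since \ref{it:fss} sits at the bottom of the already-established chain, the loop is consistent.

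It then remains to fit condition~\ref{it:T*}. The natural route is \ref{it:dec}$\implies$\ref{it:T*}$\implies$\ref{it:fss}, since \ref{it:dec} is in the cycle. For \ref{it:dec}$\implies$\ref{it:T*}: by Proposition~\ref{prop:T*decomp}, if $T$ is decomposable then so is $T^*$, and then by the implication \ref{it:dec}$\implies$\ref{it:sncp} of Theorem~\ref{thm:decqn} applied to $T^*$, the operator $T^*$ has the shifted norm convergence property. For \ref{it:T*}$\implies$\ref{it:fss}: apply the implication \ref{it:sncp}$\implies$\ref{it:fss} of Theorem~\ref{thm:decqn} to $T^*$ to get $\supp(\nu_{T^*})=\sigma(T^*)$; then use Lemma~\ref{lem:basicHS}\ref{ii} (namely $\nu_{T^*}(B)=\nu_T(B^*)$, so $\supp(\nu_{T^*})=\supp(\nu_T)^*$) together with $\sigma(T^*)=\sigma(T)^*$ to transfer the equality back to $T$, yielding $\supp(\nu_T)=\sigma(T)$, i.e.\ \ref{it:fss}. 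This closes all the equivalences.

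I do not expect a genuine obstacle here: the theorem is essentially an assembly of already-established implications plus the two elementary facts that total disconnectedness of $\supp(\nu_T)$ upgrades \ref{it:fss} to \ref{it:totdiscspec} and that Proposition~\ref{prop:totdiscBoreldec} closes the loop at the top. The one point requiring a little care is logical bookkeeping: the hypothesis constrains $\supp(\nu_T)$, whereas Proposition~\ref{prop:totdiscBoreldec} wants $\sigma(T)$ totally disconnected, so I must route the argument so that \ref{it:totdiscspec} is derived (from \ref{it:fss} under the standing hypothesis) before invoking Proposition~\ref{prop:totdiscBoreldec}; doing the implications in the order \ref{it:fssdec}$\implies\cdots\implies$\ref{it:fss}$\implies$\ref{it:totdiscspec}$\implies$\ref{it:fssdec} makes this transparent. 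The adjoint condition~\ref{it:T*} is then handled independently via Propositions~\ref{prop:T*decomp} and the $T\leftrightarrow T^*$ symmetry of $\sigma$ and $\nu$, and needs no total-disconnectedness input beyond what the cycle already provides.
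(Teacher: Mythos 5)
Your proof is correct and follows essentially the same route as the paper: use Proposition~\ref{prop:totdiscBoreldec} together with the trivial observation that \ref{it:fss} under the standing hypothesis gives \ref{it:totdiscspec}, closing the cycle of Theorem~\ref{thm:decqn}, and then handle \ref{it:T*} via the adjoint symmetry. The only cosmetic difference is that you route \ref{it:dec}$\implies$\ref{it:T*} through Proposition~\ref{prop:T*decomp} explicitly, whereas the paper instead notes that full spectral distribution is adjoint-invariant (and that the hypothesis is too) and applies the already-established equivalences to $T^*$; both are one-line observations doing the same job.
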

\begin{proof}
The implication \ref{it:totdiscspec}$\implies$\ref{it:fssdec} follows immediately from Proposition~\ref{prop:totdiscBoreldec},
while the implication \ref{it:fss}$\implies$\ref{it:totdiscspec} is clear.
Thus, by Theorem~\ref{thm:decqn}, conditions \ref{it:fssdec}--\ref{it:totdiscspec} are equivalent when $\supp(\nu_T)$ is totally disconnected.
It is clear, however, from the properties of Brown measure and of spectrum in finite von Neumann algebras,
that for any element of $S\in\Mcal$, $S$ has full spectral distribution if and only if $S^*$ has full spectral distribution.
This implies that condition~\ref{it:T*} is also equivalent to \ref{it:fssdec}--\ref{it:totdiscspec}.
\end{proof}

\section{An s.o.t.-quasinilpotent operator with thin spectrum}
\label{sec:thinspec}

Our main purpose in this section is to construct an s.o.t.-quasinilpotent operator with spectrum that is 
equal to a nondegenerate interval in the real line.

Let $f\in L^\infty(\Tcirc)$ and let $(a_k)_{k\in\Ints}$ be its Fourier coefficients:
\[
f(e^{i\theta})\sim\sum_{k\in\Ints}a_ke^{ik\theta}.
\]
Let $M_f$ denote the operator of multiplication by $f$ on $L^2(\Tcirc)$.
Consider the usual orthonormal basis $(v_k)_{k\in\Ints}$ for $L^2(\Tcirc)$ where $v_k(e^{i\theta})=e^{ik\theta}$.
Writing $M_f$ with respect to this orthonormal basis, we have the Laurent operator $L(f):=(a_{\ell-k})_{k,\ell\in\Ints}$.
Let $P$ be the projection of $L^2(\Tcirc)$ onto $\clspan\{v_k\mid k\ge0\}$ and for $n\ge0$, let $P_n$ be the
projection of $L^2(\Tcirc)$ onto $\lspan\{v_k\mid 0\le k\le n\}$.
Then $T(f):=PL(f)P$ is the Toeplitz operator $T(f)=(a_{\ell-k})_{k,\ell\ge0}$ and $T_n(f):=P_nL(f)P_n$ is the Toeplitz matrix
$T_n(f)=(a_{\ell-k})_{0\le k,\ell\le n}$.

The next result contains well known facts about norms of Toeplitz matrices;
for convenience we give a brief proof of them.
For more refined results in the self-adjoint case, see~\cite{HW54}.
\begin{prop}\label{prop:MfLT}
\begin{enumerate}[label=(\alph*)]
\item\label{it:nmL} $\|L(f)\|=\|f\|_\infty$.
\item\label{it:nmTnT} For every $n\ge0$, $\|T_n(f)\|\le\|T_{n+1}(f)\|\le\|T(f)\|$.
\item\label{it:nmTnlim} $\|L(f)\|=\|T(f)\|=\lim_{n\to\infty}\|T_n(f)\|$.
\end{enumerate}
\end{prop}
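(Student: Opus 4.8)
The plan is to prove the three parts in a natural order, establishing the basic inequality $\|T(f)\|\le\|L(f)\|\le\|f\|_\infty$ first and then showing the reverse chain so that all the displayed quantities coincide. For part~\ref{it:nmL}, the identity $\|L(f)\|=\|f\|_\infty$ is just the statement that $M_f$ acting on $L^2(\Tcirc)$ has norm $\|f\|_\infty$ and that $L(f)$ is nothing but the matrix of $M_f$ in the orthonormal basis $(v_k)_{k\in\Ints}$, hence a unitarily equivalent operator; the inequality $\|M_f\|\le\|f\|_\infty$ is immediate and the reverse follows by testing $M_f$ against normalized indicator functions of sets where $|f|$ is nearly maximal (or by noting $M_f v_0=f$ has $\|f\|_2$ and iterating, or most cleanly by the standard fact that the norm of a multiplication operator on $L^2$ of a measure space equals the essential sup). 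I would state this as a one-line reduction to a textbook fact.

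For part~\ref{it:nmTnT}, since $T_n(f)=P_nL(f)P_n$ and $T(f)=PL(f)P$ with $P_n\le P_{n+1}\le P$, compressing by a smaller projection can only decrease the norm: $\|P_nAP_n\|\le\|P_{n+1}AP_{n+1}\|\le\|PAP\|\le\|A\|$ for any bounded $A$. So this part is purely formal, a remark about norms of compressions, together with $\|T(f)\|=\|PL(f)P\|\le\|L(f)\|=\|f\|_\infty$ from part~\ref{it:nmL}.

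For part~\ref{it:nmTnlim}, the content is the reverse inequality $\|f\|_\infty\le\lim_{n\to\infty}\|T_n(f)\|$ (the limit exists by monotonicity from~\ref{it:nmTnT}, being an increasing sequence bounded by $\|T(f)\|\le\|f\|_\infty$). The approach is: given a unit vector $\xi\in L^2(\Tcirc)$ with $\|M_f\xi\|$ close to $\|f\|_\infty$, approximate $\xi$ in $L^2$ by a trigonometric polynomial $\eta=\sum_{|k|\le N}c_kv_k$; then for $n$ large, translating $\eta$ up in frequency by setting $\tilde\eta=\sum_{|k|\le N}c_kv_{k+N}$ (so $\tilde\eta$ is supported on indices in $[0,2N]$) one has $\tilde\eta\in P_n L^2$ for $n\ge 2N$, and because $M_f$ commutes with the shift $v_j\mapsto v_{j+1}$, $\|L(f)\tilde\eta\|=\|L(f)\eta\|$, which is close to $\|f\|_\infty$. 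The point is that for $n$ large enough, $L(f)\tilde\eta$ lies (up to small error, since $L(f)\tilde\eta$ has Fourier support that spreads only through the decay of the $a_k$) essentially inside $P_n L^2$, so $\|T_n(f)\tilde\eta\|=\|P_nL(f)\tilde\eta\|$ is also close to $\|f\|_\infty$; combined with $\|T_n(f)\|\le\|f\|_\infty$ this forces $\|T_n(f)\|\to\|f\|_\infty$. This shift-to-the-right trick, exploiting translation invariance of the Laurent operator together with the finite-dimensional compression, is the one nontrivial idea and hence the main obstacle — specifically, controlling the tail $\|(1-P_n)L(f)\tilde\eta\|$, which one handles either by first reducing to $f$ a trigonometric polynomial (dense in $L^\infty$? — no, but one can instead approximate $f$ in $L^2$-sense on the relevant vector, or simply observe $\|(1-P_n)L(f)\tilde\eta\|\le\|(1-P_n)L(f)P_{[0,2N]}\|\to 0$ as $n\to\infty$ for fixed $N$ because $L(f)P_{[0,2N]}$ is a fixed Hilbert–Schmidt-into-$\ell^2$ operator whose range tail vanishes). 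I would carry out the last point by fixing $N$ first, noting $L(f)\tilde\eta$ is a fixed $\ell^2$ vector, and letting $n\to\infty$.
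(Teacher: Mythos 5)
Your argument is correct and rests on the same key idea as the paper's: exploit the commutation of $L(f)$ with the bilateral shift to slide a good test vector into the range of $P_n$. The difference is in how part~\ref{it:nmTnlim} is finished. The paper works with the bilinear form: it picks finitely supported $x,y$ with $|\langle L(f)x,y\rangle|\ge\|L(f)\|-\eps$, shifts so that $x,y\in\lspan\{v_k\mid 0\le k\le n\}$, and then observes $\langle L(f)x,y\rangle=\langle T_n(f)x,y\rangle$ exactly, since $P_n x=x$ and $P_n y=y$. This makes the inequality $\|T_n(f)\|\ge\|L(f)\|-\eps$ immediate, with no tail term to control. You instead test the norm against a single shifted vector $\tilde\eta$, which gives $\|T_n(f)\tilde\eta\|=\|P_nL(f)\tilde\eta\|$, and you then need the extra step of showing $\|(1-P_n)L(f)\tilde\eta\|\to0$ as $n\to\infty$ for fixed $\tilde\eta$ (which you correctly handle: $L(f)\tilde\eta$ is a fixed $\ell^2$ vector). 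Both arguments are valid; the paper's bilinear-form variant buys you an exact identity in place of your limiting tail estimate, making the proof a couple of lines shorter. Your treatments of parts~\ref{it:nmL} (reduction to the norm of a multiplication operator) and~\ref{it:nmTnT} (monotonicity of norms under nested compressions) agree with what the paper leaves as ``immediate.''
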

\begin{proof}
Parts~\ref{it:nmL} and~\ref{it:nmTnT} are immediate from the definitions, as is the inequality $\|T(f)\|\le\|L(f)\|$.
Let $\eps>0$.
There exist $x,y\in\lspan\{v_k\mid k\in\Ints\}$ such that $\|x\|=\|y\|=1$ and $|\langle L(f)x,y\rangle|\ge\|L(f)\|-\eps$.
Since $L(f)$ commutes with the bilateral shift operator, we may without loss of generality assume
$x,y\in\lspan\{v_k\mid 0\le k\le n\}$ for some $n\ge0$.
Thus,
\[
\|T_n(f)\|\ge|\langle L(f)x,y\rangle|\ge\|L(f)\|-\eps.
\]
This implies~\ref{it:nmTnlim}.
\end{proof}

The mappings $f\mapsto L(f)$, $f\mapsto T(f)$ and $f\mapsto T_n(f)$ are, of course, linear and $*$-preserving, so we have
\[
T_n(\Re f)=\Re T_n(f),\qquad T_n(\Im f)=\Im T_n(f).
\]

\begin{thm}\label{thm:thinspec}
In the hyperfinite II$_1$-factor, there exists an s.o.t.-quasinilpotent operator
whose spectrum is a nondegenerate interval in the real line.
\end{thm}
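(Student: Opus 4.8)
The plan is to realize the desired operator as a direct sum $Q=\bigoplus_{n=1}^\infty T_{d_n}(g_n)$ of truncated Toeplitz matrices with \emph{analytic} symbols and rapidly increasing sizes $d_n\in\Nats$. Fix a sequence $\eps_n\searrow0$, let $\Omega_n=\{z\in\Cpx\mid\operatorname{dist}(z,[-1,1])<\eps_n\}$ (a bounded, open, simply connected ``stadium''), and let $g_n\colon\mathbb D\to\Omega_n$ be a conformal bijection normalized by $g_n(0)=0$ (possible since $0\in\Omega_n$). Viewing $g_n\in L^\infty(\Tcirc)$ via its boundary values, it is analytic with $g_n(0)=0$, so all its Fourier coefficients in degree $\le0$ vanish and each $T_{d_n}(g_n)$ is strictly upper triangular, hence nilpotent; moreover $\|g_n\|_\infty=\sup_{\Omega_n}|w|\le1+\eps_n$. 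The algebra $\bigoplus_nM_{d_n+1}(\Cpx)$, with the trace $\sum_nw_n\operatorname{tr}_{d_n+1}$ for any weights $w_n>0$ summing to $1$, embeds trace-preservingly into the hyperfinite II$_1$-factor, so it suffices to build $Q$ there. The only parameters I leave open are the $d_n$, to be fixed at the very end (large).

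Boundedness and s.o.t.-quasinilpotence are then immediate. By Proposition~\ref{prop:MfLT}, $\|T_{d_n}(g_n)\|\le\|T(g_n)\|\le\|L(g_n)\|=\|g_n\|_\infty\le1+\eps_1$ for every $n$, so $Q$ is bounded. Since each block $T_{d_n}(g_n)$ is nilpotent it has Brown measure $\delta_0$, and the Brown measure of a direct sum of matrices is the corresponding convex combination of the Brown measures of the summands, so $\nu_Q=\delta_0$; by Theorem~\ref{thm:hsconv} (equivalently the remark following it) this says exactly that $Q$ is s.o.t.-quasinilpotent.

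Next I would show $\sigma(Q)\subseteq[-1,1]$. Fix $\lambda\in\Cpx\setminus[-1,1]$ and put $\delta=\operatorname{dist}(\lambda,[-1,1])>0$. For every $n$ with $\eps_n<\delta$ we have $\lambda\notin\overline{\Omega_n}=\overline{g_n(\mathbb D)}$, so $h_n:=(\lambda-g_n)^{-1}$ lies in $H^\infty$ with $\|h_n\|_\infty=\bigl(\inf_{\mathbb D}|\lambda-g_n|\bigr)^{-1}=\operatorname{dist}(\lambda,\overline{\Omega_n})^{-1}\le(\delta-\eps_n)^{-1}$. Here I use that $a\mapsto T_m(a)$ is multiplicative on analytic symbols: for analytic $a,b$, $T_m(a)T_m(b)=P_mL(a)P_mL(b)P_m=P_mL(a)L(b)P_m=T_m(ab)$, the middle equality because $L(b)P_m$ is supported on coordinates $\le m$ while the part of $L(a)L(b)P_m$ in negative coordinates is annihilated by the outer $P_m$. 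Hence $T_{d_n}(h_n)$ and $T_{d_n}(\lambda-g_n)=\lambda-T_{d_n}(g_n)$ are mutually inverse, and therefore $\|(\lambda-T_{d_n}(g_n))^{-1}\|=\|T_{d_n}(h_n)\|\le\|h_n\|_\infty\le(\delta-\eps_n)^{-1}$, a bound uniform over these $n$; for the finitely many remaining $n$, $\lambda-T_{d_n}(g_n)$ is invertible merely because $T_{d_n}(g_n)$ is nilpotent and $\lambda\neq0$. Thus $\sup_n\|(\lambda-T_{d_n}(g_n))^{-1}\|<\infty$, so $\lambda-Q$ is invertible and $\lambda\notin\sigma(Q)$.

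Finally I would show $\tfrac12\in\sigma(Q)$. For each $n$ choose $w_n\in\mathbb D$ with $g_n(w_n)=\tfrac12$ (possible since $\tfrac12\in(-1,1)\subseteq\Omega_n=g_n(\mathbb D)$), and let $k_{w_n}=\sum_{\ell\ge0}\overline{w_n}^{\,\ell}v_\ell$ be the reproducing kernel of the Hardy space $PL^2(\Tcirc)$ at $w_n$. Since $T(g_n)^*k_{w_n}=\overline{g_n(w_n)}\,k_{w_n}=\tfrac12k_{w_n}$ and $P_{d_n}\le P$, writing $T_{d_n}(g_n)^*=P_{d_n}M_{\overline{g_n}}P_{d_n}$ and $P_{d_n}k_{w_n}=k_{w_n}-r_n$ with $r_n=(I-P_{d_n})k_{w_n}$ gives $T_{d_n}(g_n)^*(P_{d_n}k_{w_n})=\tfrac12P_{d_n}k_{w_n}-P_{d_n}M_{\overline{g_n}}r_n$, whence $\|(T_{d_n}(g_n)^*-\tfrac12)P_{d_n}k_{w_n}\|\le\|g_n\|_\infty\|r_n\|=\|g_n\|_\infty|w_n|^{d_n+1}(1-|w_n|^2)^{-1/2}$ while $\|P_{d_n}k_{w_n}\|\ge1$. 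Choosing $d_n$ large enough that the right-hand side is $\le1/n$, the smallest singular value of $\tfrac12-T_{d_n}(g_n)$ (equal to that of its adjoint) is $\le1/n\to0$, so $\tfrac12-Q$ is not invertible. Since $Q$ is s.o.t.-quasinilpotent, Proposition~\ref{prop:concomp} shows every connected component of $\sigma(Q)$ meets $\supp(\nu_Q)=\{0\}$, so $\sigma(Q)$ is connected; being a connected subset of $\Reals$ that contains $0$ and $\tfrac12$ and is contained in $[-1,1]$, it is a nondegenerate interval, as required. (Running the kernel estimate over a countable dense subset of $(-1,1)$, and choosing each $d_n$ to handle the first $n$ of those points, even yields $\sigma(Q)=[-1,1]$ exactly.) The step I expect to be most delicate is this last one: one needs the blocks genuinely nilpotent to keep the Brown measure at $0$, yet must force nonzero points of $[-1,1]$ to be approximate eigenvalues of the truncations — and it is precisely the freedom in the choice of $d_n$, which costs nothing for boundedness, nilpotence, or the resolvent bound $\|(\lambda-T_{d_n}(g_n))^{-1}\|\le\|h_n\|_\infty$, that makes these demands compatible.
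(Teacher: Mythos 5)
Your proof is correct and reaches the stated conclusion (indeed a slightly sharper one), but it takes a genuinely different route than the paper's, so let me compare the two. Both arguments share the core idea: conformally map $\mathbb D$ onto a thin open neighborhood $\Omega_n$ of $[-1,1]$, normalized to send $0\mapsto 0$, so that the symbol is in $H^\infty$ with vanishing zeroth coefficient, making every truncated Toeplitz matrix strictly upper triangular, hence nilpotent; then take a direct sum of these truncations inside the hyperfinite II$_1$-factor. The differences lie in the two spectral estimates. To confine $\sigma(Q)$ near the real line, the paper uses rectangular $\Omega_n$ and a self-adjoint/skew-adjoint split, getting a resolvent bound $\|(\lambda-T_{k(n)}(f_n))^{-1}\|\le 2/|\Im\lambda|$ once the imaginary part of the symbol is small; you instead exploit the multiplicativity $T_m(a)T_m(b)=T_m(ab)$ for analytic symbols, so that $T_{d_n}((\lambda-g_n)^{-1})$ is literally the resolvent, with norm controlled by $\operatorname{dist}(\lambda,\overline{\Omega_n})^{-1}$. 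To show the spectrum is nondegenerate, the paper bounds $\|Q^m\|$ from below via $\|A_n^m\|=\|A_n\|^m$ and a binomial estimate; you instead produce approximate eigenvectors of $T_{d_n}(g_n)^*$ from truncated Szeg\H{o} kernels $P_{d_n}k_{w_n}$ at preimages $g_n(w_n)=\tfrac12$. Your route is cleaner in one respect: running the kernel argument over a countable dense set of targets gives $\sigma(Q)=[-1,1]$ exactly, whereas the paper only concludes the spectrum is some nondegenerate subinterval.

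One small correction to your justification of $P_mL(a)P_mL(b)P_m=P_mL(a)L(b)P_m$ for analytic $a,b$: it is not true that $L(b)P_m$ ``is supported on coordinates $\le m$''; for analytic $b$ its range lies in $\lspan\{v_j: j\ge 0\}$ and can have components in coordinates $>m$. The actual reason the cross term vanishes is that $(I-P_m)L(b)P_m$ has range in $\lspan\{v_j: j>m\}$, and $L(a)$ for analytic $a$ maps that span into itself, so the outer $P_m$ annihilates $L(a)(I-P_m)L(b)P_m$. The identity you use is correct (and standard), only the stated reason is off.
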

\begin{proof}
Let $f_n$ be a conformal mapping from the unit disk onto 
\[
\left\{a+ib\,\bigg|\,-1<a<1,\,-\frac1n<b<\frac1n\right\}.
\]
that satifies $f_n(0)=0$.
Then, of course, for all $n$, we have $\|\Re f_n\|_\infty=1$ and $\|\Im f_n\|_\infty=\frac1n$, so $\lim_{n\to\infty}\|f_n\|_\infty=1$.
Since $f_n$ is holomorphic and $f_n(0)=0$, its Fourier coefficients $a_k$ vanish for $k\le0$.
Thus, the Toeplitz matrix $T_k(f_n)$ is strictly upper triangular and, hence, nilpotent, for each $k\ge1$.
Applying Proposition~\ref{prop:MfLT}, we get a sequence $(k(n))_{n=1}^\infty$ of positive integers such that
\[
\lim_{n\to\infty}\|T_{k(n)}(f_n)\|=1=\lim_{n\to\infty}\|\Re T_{k(n)}(f_n)\|.
\]
By a standard construction, we can realize the von Neumann algebra direct sum
\[
\Mcal=\bigoplus_{n=1}^\infty M_{k(n)}(\Cpx)
\]
as a von Neumann subalgebra of the hyperfinite II$_1$-factor.
Let
\[
Q=\oplus_{n=1}^\infty T_{k(n)}(f_n)\in\Mcal.
\]
Let $A_n=\Re(T_{k(n)}(f_n))$ and $B_n=\Im(T_{k(n)}(f_n))$.

\begin{claim}\label{cl:sotqn}
\noindent
$Q$ is s.o.t.-quasinilpotent.
\end{claim}
Since for each $n$, $T_{k(n)}(f_n)$ is nilpotent and since in $\Mcal$,
the projection $0^{\oplus n}\oplus 1\oplus 1\oplus\cdots$ converges in strong operator topology to $0$ as $n\to\infty$,
this is clear.

\begin{claim}\label{cl:sr}
\noindent
The spectral radius of $Q$ is at least $1$.
\end{claim}
For each $m\in\Nats$, we have $\|Q^m\|\ge\limsup_{n\to\infty}\|T_{k(n)}^m\|$
and for each $n$, we have
\[
\|T_{k(n)}^m\|\ge\|A_n^m\|-\sum_{k=1}^m\binom mk\|A_n\|^{m-k}\|B_n\|^k.
\]
Since $\|A_n^m\|=\|A_n\|^m$ and $\lim_{n\to\infty}\|A_n\|=1$, while $\lim_{n\to\infty}\|B_n\|=0$, we get $\|Q^m\|\ge1$.
This proves Claim~\ref{cl:sr}.

\begin{claim}\label{cl:realspec}
The spectrum of $Q$ lies in $\Reals$.
\end{claim}
\noindent
Suppose $\lambda\in\Cpx\setminus\Reals$.
We have $\lambda-Q=\bigoplus_{n=1}^\infty(\lambda-T_{k(n)}(f_n))$.
Since for each $n$, $T_{k(n)}(f_n)$ is nilpotent, each $\lambda-T_{k(n)}(f_n)$ is invertible.
To show $\lambda-Q$ is invertible, it will suffice to show
\[
\sup_{n\ge1}\|(\lambda-T_{k(n)}(f_n))^{-1}\|<\infty.
\]
As soon as $n>2/|\Im\lambda|$, we have $\|B_n\|\le|\Im\lambda|/2$ and, thus, $\|(\Im\lambda-B_n)^{-1}\|\le2/|\Im\lambda|$.
We also have
\begin{multline*}
(\lambda-T_{k(n)}(f_n))^{-1}
=\big((\Im\lambda-B_n)i+(\Re\lambda-A_n)\big)^{-1} \\
=|\Im\lambda-B_n|^{-1/2}\big(\pm i+|\Im\lambda-B_n|^{-1/2}(\Re\lambda-A_n)|\Im\lambda-B_n|^{-1/2}\big)^{-1}|\Im\lambda-B_n|^{-1/2},
\end{multline*}
where the sign in $\pm i$ is the sign of $\Im\lambda$.
Since the operator
\[
|\Im\lambda-B_n|^{-1/2}(\Re\lambda-A_n)|\Im\lambda-B_n|^{-1/2}
\]
is self-adjoint, the operator
\[
\pm i+|\Im\lambda-B_n|^{-1/2}(\Re\lambda-A_n)|\Im\lambda-B_n|^{-1/2}
\]
is normal and has inverse of norm $\le1$, so we get
\[
\left\|(\lambda-T_{k(n)}(f_n))^{-1}\right\|\le\frac2{|\Im\lambda|}.
\]
This shows that $\lambda-Q$ is invertible,
and Claim~\ref{cl:realspec} is proved.

From Claims~\ref{cl:sotqn}, \ref{cl:sr} and \ref{cl:realspec} and the fact that the spectrum of an s.o.t.-quasinilpotent
operator must be connected and contain the point $0$, it follows that $Q$ is an s.o.t.-quasinilpotent
operator in the hyperfinite II$_1$-factor
whose spectrum is an interval in $\Reals$ containing $0$ and at least one of the points $\pm1$.
\end{proof}

\section{Norm convergence properties of s.o.t.-quasinilpotent operators}
\label{sec:ncpsotqn}

We begin by proving two lemmas for quasinilpotent operators.

\begin{lem}\label{lem:qnsum}
Suppose an element $Q$ of a C$^*$-algebra
is quasinilpotent.
Then the series $\sum_{k=1}^\infty\|Q^k\|$ converges and, hence, the series
$\sum_{k=1}^\infty Q^k$
converges in norm to an operator that is quasinilpotent.
\end{lem}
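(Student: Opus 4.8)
The plan is to exploit the root-test characterization of quasinilpotence and then identify the norm sum via holomorphic functional calculus. First I would recall that $Q$ being quasinilpotent means its spectral radius vanishes, i.e. $\lim_{k\to\infty}\|Q^k\|^{1/k}=0$; this is intrinsic and unaffected by passing to the unitization, so we may assume the C$^*$-algebra is unital with unit $1$. Fixing any $\rho\in(0,1)$, there is $k_0$ with $\|Q^k\|^{1/k}<\rho$, hence $\|Q^k\|<\rho^k$, for all $k\ge k_0$. Comparison with a geometric series then gives $\sum_{k=1}^\infty\|Q^k\|<\infty$, and since a C$^*$-algebra is complete the series $\sum_{k=1}^\infty Q^k$ converges in norm; call its sum $S$. (The same argument with the index starting at $0$ shows $\sum_{k=0}^\infty Q^k$ converges in norm.)

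Next I would show $S$ is quasinilpotent. Since $\sigma(Q)=\{0\}$, the element $1-Q$ is invertible, and one checks that $\sum_{k=0}^\infty Q^k=(1-Q)^{-1}$, whence $S=Q\sum_{k=0}^\infty Q^k=Q(1-Q)^{-1}$. Thus $S=f(Q)$ where $f(z)=z/(1-z)$ is holomorphic on a neighbourhood of $\sigma(Q)=\{0\}$, so by the spectral mapping theorem $\sigma(S)=f(\sigma(Q))=\{f(0)\}=\{0\}$; equivalently $S$ has spectral radius $0$, i.e. $S$ is quasinilpotent. A functional-calculus-free alternative is to expand $S^m=\sum_{k\ge m}\binom{k-1}{m-1}Q^k$ (stars and bars), bound $\binom{k-1}{m-1}\le 2^{k-1}$ and $\|Q^k\|\le\rho^k$ for $k\ge k_0$ with $\rho<1/2$, and conclude $\limsup_m\|S^m\|^{1/m}\le 2\rho$; letting $\rho\searrow0$ gives spectral radius $0$. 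The functional-calculus version is cleaner, so I would present that and perhaps remark on the alternative.

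There is essentially no serious obstacle here: the only point needing a word of care is the non-unital case, handled by passing to the unitization while observing that $S$, being a norm limit of elements of the original (closed) C$^*$-algebra, lies in it, and that quasinilpotence does not depend on the ambient unital algebra. The whole argument is a routine combination of the root test, completeness, and the spectral mapping theorem.
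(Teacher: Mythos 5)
Your proof is correct. The convergence part is the same as the paper's: from $\lim_k\|Q^k\|^{1/k}=0$ one compares with a geometric series and uses completeness. For the quasinilpotence of the sum $S=\sum_{k\ge1}Q^k$, the paper only says that ``standard estimates'' give the result, evidently meaning a direct norm estimate much like your functional-calculus-free alternative (expanding $S^m$, bounding binomial coefficients, and letting $\rho\searrow0$). Your primary route via the holomorphic functional calculus and the spectral mapping theorem --- writing $S=Q(1-Q)^{-1}=f(Q)$ with $f(z)=z/(1-z)$ and $f(0)=0$, so $\sigma(S)=\{0\}$ --- is a genuinely different and cleaner argument: it makes the conclusion immediate at the cost of invoking the Riesz functional calculus. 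Both routes are fine; the direct estimate stays elementary and is closer to what the paper asserts, while the spectral-mapping route is shorter and makes the structural reason for quasinilpotence transparent. Your remark about passing to the unitization and then checking that $S$ lies back in the original algebra is a good point of care that the paper leaves implicit.
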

\begin{proof}
For any $\delta>0$, there exists $N_0 \in \mathbb{N}$ such that for all $k \geq N_0$, we have $\Vert Q^k \Vert \leq \delta^k$.
Thus, we see that the series $\sum_{k=1}^\infty\|Q^k\|$ converges.
From this, we get that the series $\sum_{k=0}^\infty Q^k$ converges in norm 
to a bounded operator $R$ that commutes with $Q$, and the series $\sum_{k=1}^\infty Q^k$
converges in norm to the bounded operator $RQ$.
Now standard estimates show that $RQ$ is quasinilpotent.
\end{proof}

\begin{lem}\label{lem:qninv}
Suppose an element $Q$ of a unital C$^*$-algebra
is quasinilpotent.
Then $1+Q$ is invertible and
$(1+Q)^{-1} = 1+S$,
where $S$ is quasinilpotent.
\end{lem}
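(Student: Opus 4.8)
The plan is to write down the Neumann series for $(1+Q)^{-1}$ and to recognize its tail as an application of Lemma~\ref{lem:qnsum}. First I would observe that $-Q$ is again quasinilpotent, since $\sigma(-Q)=-\sigma(Q)=\{0\}$ (equivalently, $\|(-Q)^k\|^{1/k}=\|Q^k\|^{1/k}\to0$). Then Lemma~\ref{lem:qnsum}, applied to $-Q$, tells us that $\sum_{k=1}^\infty(-Q)^k$ converges in norm to a quasinilpotent element; call it $S$. Set $R=1+S=\sum_{k=0}^\infty(-Q)^k$.

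Next I would verify that $R$ is a genuine two-sided inverse of $1+Q$. Using norm-convergence of the series and continuity of multiplication, $(1+Q)R=\sum_{k=0}^\infty(-Q)^k+\sum_{k=0}^\infty(-1)^kQ^{k+1}$, and the two sums telescope to $1$; the computation of $R(1+Q)$ is identical since $R$ is a norm limit of polynomials in $Q$ and hence commutes with $Q$. This gives $(1+Q)^{-1}=R=1+S$ with $S$ quasinilpotent, which is exactly the claim.

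I do not anticipate any real obstacle here: the content is entirely carried by Lemma~\ref{lem:qnsum}, and what remains is the routine telescoping identity for the Neumann series together with the elementary remark that $-Q$ inherits quasinilpotency from $Q$. The only point worth stating carefully is the justification for interchanging the (finite) multiplication by $1+Q$ with the infinite sum, which is immediate from norm convergence.
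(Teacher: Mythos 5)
Your proof is correct and takes essentially the same approach as the paper's: apply Lemma~\ref{lem:qnsum} to the quasinilpotent element $-Q$ to obtain the Neumann series $\sum_{k\ge1}(-Q)^k$ converging to a quasinilpotent $S$, then verify by telescoping that $1+S$ inverts $1+Q$. The paper states this more tersely but the argument is the same.
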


\begin{proof}
By Lemma \ref{lem:qnsum}, the series 
$\sum_{k=1}^\infty (-1)^kQ^k$
converges to a quasinilpotent operator $S$.  We easily see that $1+S=(1+Q)^{-1}$.
\end{proof}

\begin{lem}\label{lem:qnconv}
Suppose an element $Q$ of a unital C$^*$-algebra
is quasinilpotent.  Then
$$\lim_{n \to \infty} \big\Vert \vert (1+Q)^n\vert ^{1/n}-1 \big\Vert =0.$$
Thus, $1+Q$ has the norm convergence property.
\end{lem}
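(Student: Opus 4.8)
The plan is to show that the positive element $|(1+Q)^n|^{1/n}$ is trapped between two scalar multiples of the unit, both converging to $1$, so that $\big\||(1+Q)^n|^{1/n}-1\big\|$ is dominated by a quantity tending to $0$.

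First I would set $A_n=(1+Q)^n$; by Lemma~\ref{lem:qninv} the element $1+Q$ is invertible, hence so is each $A_n$. For any invertible element $A$ of a unital C$^*$-algebra one has $\|(A^*A)^{-1}\|=\|A^{-1}(A^{-1})^*\|=\|A^{-1}\|^2$ by the C$^*$-identity, so the spectrum of the positive invertible element $A^*A$ is contained in $[\|A^{-1}\|^{-2},\|A\|^2]$; applying functional calculus, the spectrum of $|A_n|^{1/n}=(A_n^*A_n)^{1/(2n)}$ lies in $[\|A_n^{-1}\|^{-1/n},\|A_n\|^{1/n}]$, whence
\[
\|A_n^{-1}\|^{-1/n}\,1\ \le\ |A_n|^{1/n}\ \le\ \|A_n\|^{1/n}\,1 .
\]
Since $|A_n|^{1/n}-1$ is self-adjoint with spectrum in the interval with endpoints $\|A_n^{-1}\|^{-1/n}-1$ and $\|A_n\|^{1/n}-1$, this gives
\[
\big\||A_n|^{1/n}-1\big\|\ \le\ \max\Big(\big|\,\|A_n\|^{1/n}-1\,\big|,\ \big|\,\|A_n^{-1}\|^{-1/n}-1\,\big|\Big).
\]

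It then remains only to check that both terms on the right tend to $0$, i.e. that $\|A_n\|^{1/n}\to1$ and $\|A_n^{-1}\|^{1/n}\to1$. By Gelfand's spectral radius formula, $\|A_n\|^{1/n}=\|(1+Q)^n\|^{1/n}\to r(1+Q)$, and $r(1+Q)=1$ because $\sigma(1+Q)=1+\sigma(Q)=\{1\}$ by quasinilpotence of $Q$; likewise $\|A_n^{-1}\|^{1/n}=\|(1+Q)^{-n}\|^{1/n}\to r\big((1+Q)^{-1}\big)$, which equals $1$ by the spectral mapping theorem. Hence $|(1+Q)^n|^{1/n}\to1$ in norm, which is the stated limit, and in particular $1+Q$ has the norm convergence property.

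I do not anticipate a real obstacle: the argument is entirely soft. The only points deserving care are that the lower scalar estimate $\|A_n^{-1}\|^{-1/n}\,1\le|A_n|^{1/n}$ genuinely uses invertibility of $1+Q$ (supplied by Lemma~\ref{lem:qninv}), and that the two relevant spectral radii, of $1+Q$ and of $(1+Q)^{-1}$, are both equal to $1$ — which is exactly where quasinilpotence of $Q$ enters.
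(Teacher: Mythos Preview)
Your proof is correct and follows essentially the same strategy as the paper's: both arguments sandwich $|(1+Q)^n|^{1/n}$ between scalar multiples of the identity by controlling $\|(1+Q)^n\|$ from above and $\|(1+Q)^{-n}\|$ from above (equivalently, $\|(1+Q)^n\|$ from below via invertibility). The only difference is in how those norm bounds are obtained: the paper carries out an explicit binomial estimate to show $\|(1+Q)^n\|\le(1+\varepsilon)^n$ for large $n$, and then invokes Lemma~\ref{lem:qninv} to write $(1+Q)^{-1}=1+S$ with $S$ quasinilpotent and repeat the estimate; you instead appeal directly to Gelfand's spectral radius formula, which gives $\|(1+Q)^{\pm n}\|^{1/n}\to r((1+Q)^{\pm1})=1$ in one stroke. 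Your route is a bit cleaner and makes the dependence on quasinilpotence more transparent (and in fact you don't really need Lemma~\ref{lem:qninv} at all, since invertibility of $1+Q$ already follows from $\sigma(1+Q)=\{1\}$).
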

\begin{proof}
We must show that for all $\varepsilon>0$ there exists $N_0 \in \mathbb{N}$ such that, for all $n \geq N_0$, we have 
$$(1-\varepsilon)^{2n} \leq (1+Q^*)^n(1+Q)^n \leq (1+\varepsilon)^{2n}.$$
Since $1+Q$ is invertible, it will suffice to show that for all $n \geq N_0$ we have
\begin{align}
\Vert (1+Q)^n \Vert&\leq (1+\varepsilon)^n\label{(1+q)^n} \\
\Vert (1+Q)^{-n} \Vert &\leq (1-\varepsilon)^{-n}\label{(1+q)^-n}.
\end{align}

To show \eqref{(1+q)^n}, let $N_1$ be such that $\Vert Q \Vert^n \leq \left(\frac{\varepsilon}{2} \right)^n$ for every $n\geq N_1$.  Using the binomial formula, for $n \geq N_1$ we have 
\begin{align*}
\Vert (1+Q)^n \Vert &\leq \sum_{k=0}^n \binom nk\Vert Q^k\Vert
\leq \sum_{k=0}^{N_1-1}\binom nk\Vert Q^k \Vert + \sum_{k=N_1}^n\binom nk\left( \frac{\varepsilon}{2} \right)^k\\
&\leq \left(1+\frac{\varepsilon}{2}\right)^n + \sum_{k=0}^{N_1-1}n^k\Vert Q \Vert ^k
\leq \left(1+\frac{\varepsilon}{2}\right)^n + N_1(1+n\Vert Q \Vert)^{N_1}.
\end{align*}
Since 
$$\lim_{n \to \infty}\frac{\log\big((1+\frac{\varepsilon}{2})^n + N_1(1+n\Vert Q \Vert\big)^{N_1})}{n} = \log\left(1+\frac{\varepsilon}{2}\right)<\log(1+\varepsilon),$$
we get that \eqref{(1+q)^n} holds for $n$ large enough.

By Lemma \ref{lem:qninv}, $(1+Q)^{-1}=1+S$ for some quasinilpotent $S$.  Hence applying \eqref{(1+q)^n} in the case of this operator $S$ implies that \eqref{(1+q)^-n} holds for $n$ large enough.
\end{proof}

\begin{prop}\label{prop:Qeq}
Suppose $Q\in B(\mathcal{H})$ is s.o.t.-quasinilpotent.
Then the following are equivalent:
\begin{enumerate}[label=(\alph*)]
\item\label{it:QeqQqn} $Q$ is quasinilpotent,
\item\label{it:Qncp} $Q$ has the norm convergence property,
\item\label{it:Qsncp} $Q$ has the shifted norm convergence property.
\end{enumerate}
\end{prop}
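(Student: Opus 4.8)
The plan is to establish the cycle of implications \ref{it:Qsncp}$\implies$\ref{it:Qncp}$\implies$\ref{it:QeqQqn}$\implies$\ref{it:Qsncp}, of which only the middle one has any content. The implication \ref{it:Qsncp}$\implies$\ref{it:Qncp} is immediate: taking $\lambda=0$ in the definition of the shifted norm convergence property gives the norm convergence property.

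The heart of the argument is \ref{it:Qncp}$\implies$\ref{it:QeqQqn}, and it turns out to be short once we use uniqueness of strong operator limits. Since $Q$ is s.o.t.-quasinilpotent, the sequence $|Q^n|^{1/n}$ converges to $0$ in the strong operator topology. If we assume in addition that $|Q^n|^{1/n}$ converges in norm to some positive operator $A$, then it also converges to $A$ in the strong operator topology, so $A=0$ and hence $\||Q^n|^{1/n}\|\to0$. Now I would invoke the elementary identity $\||Q^n|^{1/n}\|=\||Q^n|\|^{1/n}=\|Q^n\|^{1/n}$, which comes from $\||Q^n|\|=\|(Q^n)^*Q^n\|^{1/2}=\|Q^n\|$ and functional calculus. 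So $\|Q^n\|^{1/n}\to0$, i.e.\ the spectral radius of $Q$ is $0$ and $Q$ is quasinilpotent.

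For \ref{it:QeqQqn}$\implies$\ref{it:Qsncp}, suppose $Q$ is quasinilpotent and fix $\lambda\in\Cpx$; I must show $Q-\lambda1$ has the norm convergence property. When $\lambda=0$ this is again the identity $\||Q^n|^{1/n}\|=\|Q^n\|^{1/n}\to0$, so $|Q^n|^{1/n}\to0$ in norm. When $\lambda\ne0$, write $Q-\lambda1=-\lambda(1+Q')$ with $Q'=-\lambda^{-1}Q$ quasinilpotent; then $|(Q-\lambda1)^n|=|\lambda|^n\,|(1+Q')^n|$, so $|(Q-\lambda1)^n|^{1/n}=|\lambda|\,|(1+Q')^n|^{1/n}$, and by Lemma~\ref{lem:qnconv} this converges in norm to $|\lambda|\cdot1$. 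Hence $Q-\lambda1$ has the norm convergence property for every $\lambda$, so $Q$ has the shifted norm convergence property.

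I do not anticipate any real obstacle. The one conceptual point is the observation underlying \ref{it:Qncp}$\implies$\ref{it:QeqQqn}: a norm limit of $|Q^n|^{1/n}$, if it exists, must coincide with the (always existing, and here zero) strong operator limit. Everything else is bookkeeping with $\||Q^n|\|=\|Q^n\|$ and with the already-proved Lemma~\ref{lem:qnconv}.
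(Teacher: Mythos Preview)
Your proof is correct and follows essentially the same approach as the paper: the paper likewise notes that \ref{it:Qsncp}$\implies$\ref{it:Qncp} is immediate, declares the equivalence of \ref{it:QeqQqn} and \ref{it:Qncp} ``clear,'' and derives \ref{it:QeqQqn}$\implies$\ref{it:Qsncp} via the same rescaling argument and Lemma~\ref{lem:qnconv}. Your explicit justification of \ref{it:Qncp}$\implies$\ref{it:QeqQqn} (matching the norm limit with the s.o.t.\ limit and using $\||Q^n|^{1/n}\|=\|Q^n\|^{1/n}$) simply unpacks what the paper leaves implicit.
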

\begin{proof}
The equivalence of~\ref{it:QeqQqn} and~\ref{it:Qncp} is clear.
The implication \ref{it:Qsncp}$\implies$\ref{it:Qncp} is also immediate,
while the implication \ref{it:QeqQqn}$\implies$\ref{it:Qsncp} follows from Lemma~\ref{lem:qnconv}.
Indeed, assuming~\ref{it:QeqQqn}, for every nonzero $\lambda\in\Cpx$, $\lambda^{-1}Q$ is also quasinilpotent.
So after rescaling, from Lemma~\ref{lem:qnconv} we have that $\lambda+Q$ has the norm convergence property.
Of course, $Q$ itself has the norm convergence property, and~\ref{it:Qsncp} is proved.
\end{proof}

\begin{proof}[Proof of Proposition~\ref{prop:nofimpliesa}]
The proof consists of constructing an example of $T\in\Mcal$ that has full spectral distribution but does not satisfy the shifted norm convergence property.
Let $T\in\Mcal$ be a direct sum $T=N\oplus Q\in\Mcal_1\oplus\Mcal_2$, where $N$ is a normal operator whose Brown measure
(taken in $\Mcal_1$) is supported on the closed unit disk and has no atoms, and where $Q$ is any s.o.t.-quasinilpotent operator 
that is not quasinilpotent but whose spectrum is contained in the closed unit disk;
for example, take the operator described in the introduction around equation~\eqref{eq:QJordan}.
Then both the spectrum of $T$ and the support of its Brown measure are the closed unit disk, so $T$ has full spectral distribution.
However, $T$ has the norm convergence property if and only if both $N$ and $Q$ have it.
But $Q$ does not have the norm convergence property, by Proposition~\ref{prop:Qeq}.
\end{proof}

We will show, using the following proposition in Example~\ref{ex} below, that the equivalence of~\ref{it:Qncp} and~\ref{it:Qsncp}
in Proposition~\ref{prop:Qeq} does not extend even to operators having Brown measure supported at
exactly one (nonzero) point.

\begin{prop}\label{prop:1+Qncp}
Let $Q\in B(\mathcal{H})$ be s.o.t.-quasinilpotent.
Then $1+Q$ has the norm convergence property if and only if $\sigma(1+Q) \subseteq \mathbb{T}$.
\end{prop}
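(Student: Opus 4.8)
The plan is to prove the two implications separately. The implication ``$\sigma(1+Q)\subseteq\mathbb{T}\implies$ norm convergence property'' does not use s.o.t.-quasinilpotence and is the routine direction; the converse is where the hypothesis on $Q$ enters, through identifying the norm limit of $|(1+Q)^n|^{1/n}$ as the identity. Throughout, write $r(\cdot)$ for the spectral radius.

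For the routine direction: if $\sigma(1+Q)\subseteq\mathbb{T}$ then $1+Q$ is invertible and $r(1+Q)=r((1+Q)^{-1})=1$. From the operator inequalities $|(1+Q)^n|^2=((1+Q)^n)^*(1+Q)^n\le\|(1+Q)^n\|^2$ and $|(1+Q)^n|^{-2}=(1+Q)^{-n}((1+Q)^{-n})^*\le\|(1+Q)^{-n}\|^2$, together with operator monotonicity of $t\mapsto t^{1/2n}$, one obtains
\[
\|(1+Q)^{-n}\|^{-1/n}\cdot1\ \le\ |(1+Q)^n|^{1/n}\ \le\ \|(1+Q)^n\|^{1/n}\cdot1 ,
\]
and by the spectral radius formula both outer expressions converge to $1$. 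Hence $|(1+Q)^n|^{1/n}\to1$ in norm, so $1+Q$ has the norm convergence property.

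For the converse, suppose $|(1+Q)^n|^{1/n}\to A$ in norm. The first step is to reduce to showing $A=1$. Granting that: $\|(1+Q)^n\|^{1/n}=\||(1+Q)^n|^{1/n}\|\to1$ gives $r(1+Q)\le1$; for large $n$ one has $|(1+Q)^n|^{1/n}\ge\tfrac12\cdot1$, so $|(1+Q)^n|$, hence $(1+Q)^n$, hence $1+Q$, is invertible; then $(|(1+Q)^n|^{1/n})^{-1}\to1$ in norm, and since $\|((1+Q)^n)^{-1}\|=\||(1+Q)^n|^{-1}\|$ this yields $\|(1+Q)^{-n}\|^{1/n}\to1$, so $r((1+Q)^{-1})\le1$; together these put $\sigma(1+Q)$ inside $\overline{\mathbb{D}}\cap\{\,|z|\ge1\,\}=\mathbb{T}$.

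The remaining, and central, point is that $A=1$, and this is where s.o.t.-quasinilpotence is used. Since $Q$ is s.o.t.-quasinilpotent, the Brown measure of $1+Q$ is the point mass $\delta_1$, so $\nu_{1+Q}(r\overline{\mathbb{D}})$ equals $0$ for $r<1$ and $1$ for $r\ge1$; by faithfulness of the trace this forces $P(1+Q,r\overline{\mathbb{D}})=0$ for $r<1$ and $=1$ for $r\ge1$. By Theorem~\ref{thm:hsconv} the sequence $|(1+Q)^n|^{1/n}$ converges in the strong operator topology to an operator whose spectral projection for each $[0,r]$ is exactly $P(1+Q,r\overline{\mathbb{D}})$, and the previous sentence identifies that s.o.t. limit as $1$; since norm convergence implies strong-operator convergence to the same limit, $A=1$. (For an operator $Q$ not a priori inside a finite von Neumann algebra one must instead establish the s.o.t.\ convergence $|(1+Q)^n|^{1/n}\to1$ directly from s.o.t.-quasinilpotence.) I expect this step — pinning the limit down to the identity — to be the main obstacle: the elementary operator-inequality arguments only relate $A$ to the spectral radii $r(1+Q)$ and $r((1+Q)^{-1})$, and it is precisely the s.o.t.-quasinilpotence of $Q$, via the triviality of the Brown measure of $1+Q$ and the Haagerup--Schultz convergence theorem, that collapses the gap and forces $\sigma(1+Q)\subseteq\mathbb{T}$.
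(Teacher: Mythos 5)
Your proof is correct and takes essentially the same route as the paper: both directions rest on the spectral radius formula together with operator monotonicity of $t\mapsto t^{1/2n}$, and the converse hinges on the norm limit of $|(1+Q)^n|^{1/n}$ being exactly $1$. The only notable difference is that you spell out why that limit equals $1$ (via the Brown measure of $1+Q$ being $\delta_1$ and Theorem~\ref{thm:hsconv}, which requires $Q\in\Mcal$ rather than just $Q\in B(\Hc)$ as stated), whereas the paper asserts it without comment; you also use $(1+Q)^{-1}$ where the paper uses $(1+Q^*)^{-1}$, which is an immaterial variant.
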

\begin{proof}
The proof is a straightforward application of the spectral radius formula.  Suppose first that $\sigma(1+Q) \subseteq \mathbb{T}$.  Since the spectral radii of $1+Q$ and $(1+Q)^{-1}$ are both $1$, for any $\varepsilon>0$, there exists $N\in\Nats$ such that for all $n>N$, $\Vert (1+Q)^n\Vert<(1+\varepsilon)^n$ and $\Vert (1+Q)^{-n}\Vert<(1+\varepsilon)^n$.  Thus we have 
$$(1+Q^*)^n(1+Q)^n\leq \Vert (1+Q)^n\Vert^2\leq (1+\varepsilon)^{2n}$$
and
$$(1+Q^*)^n(1+Q)^n\geq (1+Q^*)^n\frac{(1+Q^*)^{-n}(1+Q)^{-n}}{\Vert (1+Q^*)^{-n}(1+Q)^{-n}\Vert}(1+Q)^n\geq (1+\varepsilon)^{-2n}.$$
Hence,
$$(1+\varepsilon)^{-1}\leq \vert (1+Q)^n\vert^{1/n}\leq 1+\varepsilon.$$
It follows that the sequence $\vert (1+Q)^n\vert^{1/n}$ converges in norm to $1$.

Now assume that $1+Q$ has the norm convergence property.  Since $\vert (1+Q)^n\vert^{1/n}$ must converge in norm to $1$, it follows that for sufficiently large $n$, $\vert (1+Q)^n\vert^{1/n}$ is invertible, so that also $(1+Q)^n$ and $1+Q$ are invertible.  We observe now that for large $n$ we have 
$$(1-\varepsilon)^{2n}\leq (1+Q^*)^n(1+Q)^n\leq (1+\varepsilon)^{2n}$$
so that 
$$(1+\varepsilon)^{-2n}\leq (1+Q)^{-n}(1+Q^*)^{-n}\leq (1-\varepsilon)^{-2n}.$$
Hence, $(1+Q^*)^{-1}$ has the norm convergence property.

In addition, since $\vert (1+Q)^n\vert^{1/n}$ must converge in norm to $1$, for large $n$ we have $\Vert (1+Q)^n\Vert < (1+\varepsilon)^n$,
so by the spectral radius formula, $\sigma(1+Q)\subseteq \overline{\mathbb{D}}$.
Applying the same argument to $(1+Q^*)^{-1}$ we see that $\sigma(1+Q^*)\subseteq \mathbb{C}\setminus\mathbb{D}$.  Thus, 
$$\sigma(1+Q)\subseteq \overline{\mathbb{D}} \bigcap \left(\mathbb{C}\setminus\mathbb{D}\right) = \mathbb{T},$$
as desired.
\end{proof}

Here is the promised example, that serves both to show that Proposition~\ref{prop:Qeq} does not extend, and to show that the naive guess
at an answer to Question~\ref{ques:basic} is wrong.
\begin{ex}\label{ex}
Let $Q$ be the s.o.t.-quasinilpotent element of the hyperfinite II$_1$--factor that was constructed in Section~\ref{sec:thinspec},
with $\sigma(Q)$ a nondegenerate interval in the real line.
Using either the holomorphic functional calculus and the main result of~\cite{DSZ.a}, or arguing more directly with power series,
we have that
the operator $\exp(iQ)$ is of the form $1+S$, where $S$ is s.o.t.-quasinilpotent and $\exp(iQ)$ has spectrum contained in $\mathbb{T}$
and consists of more than just the point $\{1\}$.
Hence, by Proposition~\ref{prop:1+Qncp}, $\exp(iQ)$ has the norm convergence property.
However, $\sigma(\exp(iQ)-1)\neq \{0\}$, so $S=\exp(iQ)-1$ is not quasinilpotent and
does not have the norm convergence property.
Therefore, $\exp(iQ)$ does not have the shifted norm convergence property.
\end{ex}

\section{Operators with finitely supported Brown measure}
\label{sec:finsupp}

In this section, we focus on elements $T\in\Mcal$ whose Brown measures have finite support.

If the Brown measure of the operator $T$ has more than one point in its support, then it is possible to construct distinct upper-triangular forms of $T$.  Theorem \ref{thm:hsconv} tells us that the strong operator limit, $A$, of the sequence $|T^n|^{1/n}$ has as spectral projections $P(T,r\overline{\mathbb{D}})$ for $r\in[0,\Vert T\Vert]$.
Thus, if the Brown measure of $T$ has support
$a_1,\ldots,a_m$
ordered so that $|a_1|\leq |a_2|\leq\cdots\leq|a_m|$, then the corresponding upper-triangular form $T=N+Q$ is upper-triangular with respect to the spectral projections of $A$ for disks centered at the origin.  Theorem \ref{thm:hsconv} thus implies that $A=|N|$.  We use this fact to show that if the upper-triangular part $Q$ for such an ordering is quasinilpotent, then $T$ has the norm convergence property.

\begin{lem}\label{lem:finqnthenncp}
Let $Q \in \mathcal{M}$ be quasinilpotent.
Let $(P_i)_{1 \leq i \leq m}$ be projections such that $\sum_{i=1}^m P_i =1$ and such that, for every $1 \leq k \leq m$,
the projection $\sum_{i=1}^k P_i$ is $Q$-invariant.
Let $a_1,\ldots,a_m$ be complex numbers with $0 \leq |a_1| \leq |a_2| \leq \cdots \leq |a_m|$.  Let $T = \sum_{i=1}^m a_iP_i + Q$.  Then $|T^n|^{1/n}$ converges in norm as $n\to\infty$ to $\sum_{i=1}^m |a_i|P_i$.  In particular, $T$ has the norm convergence property.
\end{lem}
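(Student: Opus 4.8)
The plan is to deduce the result from Lemma~\ref{lem:disksNCP} together with Theorem~\ref{thm:hsconv}, after first identifying the Haagerup--Schultz projections of $T$ for closed disks centered at the origin. Set $Q_k=\sum_{i=1}^k P_i$ for $0\le k\le m$, so that $Q_0=0$ and $Q_m=1$. By hypothesis each $Q_k$ is $Q$-invariant, and since $Q_k$ commutes with $N:=\sum_{i=1}^m a_iP_i$ it is also $T$-invariant; in particular $T$ is upper triangular with respect to $P_1,\dots,P_m$.

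The first step is to observe that each compression $P_iQP_i$ is quasinilpotent. Indeed, Corollary~\ref{cor:utquasi}(i) applied to the $Q$-invariant projection $Q_i$ shows that $Q_iQQ_i$ is quasinilpotent in $Q_i\Mcal Q_i$, and applying it once more, now to the $(Q_iQQ_i)$-invariant subprojection $Q_{i-1}$ of $Q_i$, shows that the compression $P_iQP_i$ of $Q_iQQ_i$ by $P_i=Q_i-Q_{i-1}$ is quasinilpotent. Hence $P_iTP_i=a_iP_i+P_iQP_i$ has spectrum $\{a_i\}$ and Brown measure equal to the point mass $\delta_{a_i}$ (using Lemma~\ref{lem:basicHS}). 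Iterating Lemma~\ref{lem:utspec} then gives $\sigma(TQ_k)=\{a_1,\dots,a_k\}$ and $\sigma((1-Q_k)T)=\{a_{k+1},\dots,a_m\}$, the spectra being computed in the appropriate compressions of $\Mcal$, while iterating Theorem~\ref{thm:brmeassplit} gives that $\nu_{TQ_k}$ is concentrated in $\{a_1,\dots,a_k\}$, that $\nu_{(1-Q_k)T}$ is concentrated in $\{a_{k+1},\dots,a_m\}$, and that $\nu_T=\sum_{i=1}^m\tau(P_i)\delta_{a_i}$; in particular $\tau(Q_k)=\sum_{i=1}^k\tau(P_i)$.

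Next, for $s\ge0$ set $k(s)=\#\{i:|a_i|\le s\}$, so that $\nu_T(s\overline{\mathbb{D}})=\sum_{i=1}^{k(s)}\tau(P_i)=\tau(Q_{k(s)})$. Together with the facts collected above, this shows that $Q_{k(s)}$ satisfies all four defining properties of the Haagerup--Schultz projection in Theorem~\ref{thm:hsproj} for the Borel set $B=s\overline{\mathbb{D}}$, so by the uniqueness part of that theorem $P(T,s\overline{\mathbb{D}})=Q_{k(s)}$. Consequently hypotheses \ref{suff1} and \ref{suff2} of Lemma~\ref{lem:disksNCP} hold: $\sigma(TQ_{k(s)})=\{a_1,\dots,a_{k(s)}\}\subseteq s\overline{\mathbb{D}}$ by the definition of $k(s)$, and $\sigma((1-Q_{k(s)})T)=\{a_{k(s)+1},\dots,a_m\}$ is disjoint from $s\mathbb{D}$ since those values have modulus exceeding $s$. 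Lemma~\ref{lem:disksNCP} now gives that $|T^n|^{1/n}$ converges in norm, so that $T$ has the norm convergence property.

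It remains to identify the limit. Norm convergence forces convergence in the strong operator topology to the same limit, which by Theorem~\ref{thm:hsconv} is the positive operator whose spectral projection associated with $[0,r]$ is $P(T,r\overline{\mathbb{D}})=Q_{k(r)}$. Hence this limit has atomic spectral measure, with atom $Q_i-Q_{i-1}=P_i$ at the point $|a_i|$, and therefore equals $\sum_{i=1}^m|a_i|P_i$, as claimed. I do not expect a genuine obstacle, since the proof merely assembles results established earlier; the step requiring the most care is the uniqueness argument identifying $P(T,s\overline{\mathbb{D}})$ with $Q_{k(s)}$, as this is exactly what makes Lemma~\ref{lem:disksNCP} applicable.
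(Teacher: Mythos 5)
Your proof is correct, but it follows a genuinely different route from the paper's. The paper proves this lemma by a direct induction on $m$: the base case $m=1$ comes from Lemma~\ref{lem:qnconv}, and for the inductive step the authors write $T$ as a $2\times 2$ block upper-triangular matrix with blocks $A=N+Q_{11}$ and $C=a_m+Q_{22}$, factor $T^n=R_nG^n$ with $G=\mathrm{diag}(A,C)$ and $R_n$ encoding the off-diagonal accumulation, and then estimate $\|R_n\|$ and $\|R_n^{-1}\|$ by hand (using the inductive hypothesis on both $A$ and $C$) before invoking operator monotonicity of $t\mapsto t^{1/2n}$. Your argument instead bypasses the induction entirely: you identify the Haagerup--Schultz projections $P(T,s\overline{\mathbb{D}})=Q_{k(s)}$ by checking the four defining properties of Theorem~\ref{thm:hsproj} (which is the step deserving the most care, and you handle it correctly — including the vacuous cases $k(s)\in\{0,m\}$), then feed the resulting spectral inclusions into Lemma~\ref{lem:disksNCP} to get norm convergence, and finally read off the limit from Theorem~\ref{thm:hsconv}. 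There is no circularity since Lemma~\ref{lem:disksNCP} appears earlier and is not proved using this lemma. The trade-off: the paper's induction is self-contained and elementary (needing only Lemma~\ref{lem:qnconv}), and carries the identification of the limit through the induction automatically; your proof is shorter and more conceptual but leans on the heavier machinery of Lemma~\ref{lem:disksNCP}, which itself requires a spectral ordering and some nontrivial estimates — so the total amount of work is roughly comparable, just relocated.
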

\begin{proof}
If $a_m=0$, the result is clear, so assume $a_m \neq 0$.
The proof proceeds by induction.
The case $m=1$ follows from Lemma~\ref{lem:qnconv}.
For $m>1$, suppose the result holds for all $1\leq k<m$.

Let 
\[
N=\sum_{i=1}^{m-1} a_iP_i,\quad
Q_{11}=Q\sum_{i=1}^{m-1}P_i,\quad
Q_{12}=\sum_{i=1}^{m-1}P_iQP_m,\quad
Q_{22}=P_mQ.
\]
Note that by Lemma~\ref{lem:utspec}, $Q_{11}$ and $Q_{22}$ are quasinilpotent.

We may now write $T$ as the 2$\times$2 matrix, 
$$T = \left( \begin{matrix}
N+Q_{11} & Q_{12}\\
0 & a_m+Q_{22}
\end{matrix}
\right).$$
We also write
\begin{alignat*}{3}
A &= N+Q_{11}, &\quad
C &= a_m +Q_{22}, &\quad
B_n &= \sum_{k=0}^{n-1}A^kQ_{12}C^{-k-1},\\
R_n&=\left( \begin{matrix}
1 & B_n\\
0 & 1
\end{matrix}
\right),&\quad
G&=\left( \begin{matrix}
A & 0\\
0 & C
\end{matrix}
\right),
\end{alignat*}
so that $T^n = R_nG^n.$  Note that $R_n$ is invertible in $\Mcal$ and $C$ is invertible in $P_m\Mcal P_m$, with 
$$C^{-1}=a_m^{-1}+S$$
for a quasinilpotent operator $S$ (by Lemma \ref{lem:qninv}), and 
$$R_n^{-1}=\left( \begin{matrix}
1 & -B_n\\
0 & 1
\end{matrix}
\right).$$

Assume for the moment that $N\neq 0$.
Fix $\varepsilon\in(0,1)$.
By the inductive hypothesis, there exists $K_0 \in \mathbb{N}$ such that for any $k\geq K_0$, we have
\begin{gather}
\||A^k|^{\frac1k}\|\leq|a_{m-1}|(1+\varepsilon), \label{eq:Akbd} \\
\||C^k|^{\frac1k}\|\in(|a_m|(1-\varepsilon),|a_m|(1+\varepsilon)), \notag \\
\||C^{-k}|^{\frac1k}\|\in(|a_m|^{-1}(1-\varepsilon),|a_m|^{-1}(1+\varepsilon)). \notag
\end{gather}
Hence for $k > K_0$,
\begin{multline*}
\Vert A^kQ_{12}C^{-k-1}\Vert\leq\Vert Q_{12}C^{-1}\Vert\cdot\Vert A^k\Vert\cdot\Vert C^{-k}\Vert \\
\leq \Vert Q_{12}C^{-1}\Vert\cdot(|a_{m-1}|(1+\varepsilon))^k\cdot(|a_m|^{-1}(1+\varepsilon))^k
\leq \Vert Q_{12}C^{-1}\Vert\cdot(1+3\varepsilon)^k.
\end{multline*}
For $n>K_0,$ we have
\begin{multline*}
\Vert B_n \Vert = \left\Vert \sum_{k=0}^{K_0}A^kQ_{12}C^{-k-1} + \sum_{K_0+1}^{n-1} A^kQ_{12}C^{-k-1} \right\Vert \\
\le \left\Vert \sum_{k=0}^{K_0}A^kQ_{12}C^{-k-1} \right\Vert + n\Vert Q_{12}C^{-1}\Vert (1+3\varepsilon)^n.
\end{multline*}
Thus, for sufficiently large $n,$ we have
$\|R_n^{-1}\|=\|R_n\|\leq(1+4\varepsilon)^n$.
It follows that
\begin{align*}
|T^n|^2=(G^n)^*(R_n^*R_n)G^n&\leq\|R_n\|^2(G^n)^*G^n\leq(1+4\varepsilon)^{2n}(G^n)^*G^n, \\
|T^n|^2=(G^n)^*(R_n^*R_n)G^n&\geq\|R_n^{-1}\|^{-2}(G^n)^*G^n\geq(1+4\varepsilon)^{-2n}(G^n)^*G^n.
\end{align*}
Since the function $t\mapsto t^{1/2n}$ is operator monotone for positive operators, we get
$$(1+4\varepsilon)^{-1}|G^n|^{\frac1n}\leq|T^n|^{\frac1n}\leq(1+4\varepsilon)|G^n|^{\frac1n}$$
for all sufficiently large $n$.
By the inductive assumption, we have
$$|N|-\varepsilon\leq|G^n|^{\frac1n}\leq|N|+\varepsilon$$
for all sufficiently large $n.$ A combination of these two formulae gives us
$$(1+4\varepsilon)^{-1}(|N|-\varepsilon)\leq|T^n|^{\frac1n}\leq(1+4\varepsilon)(|N|+\varepsilon)$$
for all sufficiently large $n$.
Since $\varepsilon$ is arbitrary, it follows that
$|T^n|^{\frac1n}$ converges in norm to $|N|$ as $n\to\infty$, as desired.

If $N=0$, then the argument proceeds similarly, but where the estimate~\eqref{eq:Akbd} is replaced by $\||A^k|^{1/k}\|\le |a_m|/2$.
\end{proof}

We now show that for an operator with finitely supported Brown measure, the spectral
ordering used to construct an upper-triangular forms $T=N+Q$ 
does not affect whether s.o.t.-quasinilpotent part $Q$ is quasinilpotent.

\begin{lem}\label{lem:finoneqnthenallqn}
Suppose $T \in \mathcal{M}$ has finitely supported Brown measure.
Suppose that there exists an upper-triangular form $T=N+Q$ such that $Q$ is quasi\-nil\-pot\-ent.
If $T=\hat{N}+\hat{Q}$ is another upper-triangular form, then also $\hat{Q}$ is quasinilpotent.
\end{lem}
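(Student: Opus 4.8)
The plan is to reduce the claim to a statement about the diagonal blocks of $\hat{Q}$ and to control those blocks using the one quasinilpotent form that is given. Since $\nu_T$ is finitely supported, write $\supp(\nu_T)=\{a_1,\dots,a_m\}$. Any upper-triangular form of $T$ coming from a continuous spectral ordering is upper-triangular with respect to an increasing chain $0=\hat{p}_0\le\hat{p}_1\le\dots\le\hat{p}_m=1$ of Haagerup--Schultz projections $\hat{p}_k=P(T,S_k)$, where $\emptyset=S_0\subsetneq S_1\subsetneq\dots\subsetneq S_m=\supp(\nu_T)$ absorbs the support one atom at a time, say $S_k\setminus S_{k-1}=\{\hat{a}_k\}$ (this is recorded in the discussion preceding Lemma~\ref{lem:finqnthenncp}); moreover the normal part is $\hat{N}=\sum_{k=1}^m\hat{a}_k\hat{P}_k$ with $\hat{P}_k=\hat{p}_k-\hat{p}_{k-1}$. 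Each $\hat{p}_k$ is $T$-invariant, hence also $\hat{Q}$-invariant, so applying Lemma~\ref{lem:utspec} repeatedly along the chain gives $\sigma(\hat{Q})=\bigcup_{k=1}^m\sigma(\hat{P}_k\hat{Q}\hat{P}_k)$ and $\sigma(T)=\bigcup_{k=1}^m\sigma(\hat{P}_kT\hat{P}_k)$, the block spectra being computed in the corners $\hat{P}_k\Mcal\hat{P}_k$. As $\hat{P}_k\hat{Q}\hat{P}_k=\hat{P}_kT\hat{P}_k-\hat{a}_k\hat{P}_k$, it suffices to show $\sigma(\hat{P}_kT\hat{P}_k)=\{\hat{a}_k\}$ for each $k$.

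First I would feed the given form $T=N+Q$ into Lemma~\ref{lem:QNspec}: as it arises from a continuous spectral ordering and $Q$ is quasinilpotent, $T$ has full spectral distribution, so $\sigma(T)=\supp(\nu_T)=\{a_1,\dots,a_m\}$, a finite set. Hence each $\sigma(\hat{P}_kT\hat{P}_k)\subseteq\sigma(T)$ is finite, so totally disconnected, and by Proposition~\ref{prop:totdiscfsd} the operator $\hat{P}_kT\hat{P}_k$ has full spectral distribution: its spectrum is the support of its Brown measure. By Theorem~\ref{thm:brmeassplit}, iterated along the chain together with the properties of Haagerup--Schultz projections (cf.\ the proof of Proposition~\ref{prop:BoreldecAB}), that Brown measure is the renormalized restriction of $\nu_T$ to $S_k\setminus S_{k-1}=\{\hat{a}_k\}$, namely the point mass at $\hat{a}_k$. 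Therefore $\sigma(\hat{P}_kT\hat{P}_k)=\{\hat{a}_k\}$, so $\hat{P}_k\hat{Q}\hat{P}_k=\hat{P}_kT\hat{P}_k-\hat{a}_k\hat{P}_k$ has spectrum $\{0\}$ and is quasinilpotent. By the identity above, $\sigma(\hat{Q})=\{0\}$, i.e.\ $\hat{Q}$ is quasinilpotent.

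A shorter high-level alternative is available once one has $\sigma(T)=\supp(\nu_T)$ finite: this set is totally disconnected, so Theorem~\ref{thm:totdisc} applies, and since $T$ then satisfies condition~\ref{it:fss} of Theorem~\ref{thm:decqn} it also satisfies condition~\ref{it:Qqn}, which is precisely the assertion that the s.o.t.-quasinilpotent part of every upper-triangular form from a continuous spectral ordering is quasinilpotent; alternatively Proposition~\ref{prop:totdiscBoreldec} makes $T$ Borel decomposable. I expect the only real obstacle to be bookkeeping rather than analysis: making sure that an arbitrary upper-triangular form of $T$ is genuinely encoded by a chain of Haagerup--Schultz projections stepping through the finitely many subsets of $\supp(\nu_T)$, and --- should one allow forms built from spectral orderings that are not continuous, where a chain might leap past several atoms in one step --- handling those multi-atom steps by iterating the block argument, with Proposition~\ref{prop:totdiscBoreldec} in place of Proposition~\ref{prop:totdiscfsd}.
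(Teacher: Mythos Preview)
Your argument is correct and follows essentially the same route as the paper: reduce to the diagonal blocks $\hat P_kT\hat P_k$, use that $\sigma(T)$ is finite so each block has finite (hence totally disconnected) spectrum, invoke Proposition~\ref{prop:concomp} to equate spectrum with Brown support on each block, identify that support as $\{\hat a_k\}$, and conclude via Corollary~\ref{cor:utquasi}. The only cosmetic differences are that the paper obtains $\sigma(T)=\{a_1,\dots,a_n\}$ directly from the block decomposition of the given form (each $P_kTP_k=a_k+P_kQP_k$ with $P_kQP_k$ quasinilpotent) rather than citing Lemma~\ref{lem:QNspec}, and pins down the block Brown measure by noting that $R_k\hat QR_k$ is s.o.t.-quasinilpotent (Corollary~\ref{cor:utquasi}) rather than by iterating Theorem~\ref{thm:brmeassplit}; neither choice affects the substance.

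Your high-level alternative through Theorem~\ref{thm:totdisc} is also legitimate and not circular, since that theorem is established independently of the present lemma.
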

\begin{proof}
We have $N = \sum_{k=1}^n a_kP_k$,
for distinct complex numbers $a_1,\ldots,a_n$ and for projections $P_1,\ldots,P_n$ whose sum is $1$,
where
for $m \leq n$, $\sum_{k=1}^m P_k$ is the Haagerup-Schultz projection of $T$ associated with the set $\{ a_1,a_2,\ldots,a_m \} $.
Lemma \ref{lem:utspec} and Corollary \ref{cor:utquasi} imply that $\sigma(T)=\{ a_1,a_2,\ldots,a_n \}$.

Let $\{b_1,b_2,\ldots,b_n\}$ be any reordering of $\{ a_1,a_2,\ldots,a_n \}$,
and let $T = \hat{N}+\hat{Q}$ be the corresponding upper triangular form of $T$.
Then we have $\hat{N} = \sum_{k=1}^n b_kR_k$ and with $\sum_{k=1}^m R_k$ the Haagerup-Schultz projection of $T$ associated with the set
$\{ b_1,b_2,\ldots,b_m \} $ for all $m \leq n$.
Then Lemma \ref{lem:utspec} implies that $\sigma(R_kTR_k)$ contains only finitely many points for each $k$.
Combining this with Proposition~\ref{prop:concomp}, we see that $\supp(\nu_{R_kTR_k}) = \sigma(R_kTR_k)$ for each $k$.
Since $R_k\hat{Q}R_k$ is s.o.t.-quasinilpotent, we have for every $k$,
$$\sigma(R_kTR_k) = \supp(\nu_{R_kTR_k}) = \{ b_k\}.$$
As $R_kTR_k = b_k+R_k\hat{Q}R_k$, this implies that $R_k\hat{Q}R_k$ is quasinilpotent.  Since this is true for all $k$, Corollary \ref{cor:utquasi} implies that $\hat{Q}$ is quasinilpotent, completing the proof.
\end{proof}

\begin{thm}\label{thm:fsequivs}
Suppose $T\in\Mcal$ and $\supp(\nu_T)$ is finite.
Then all the conditions \ref{it:fssdec}--\ref{it:T*} of Theorems~\ref{thm:decqn} and~\ref{thm:totdisc}
are equivalent to each other and are also equivalent to each of the following conditions:
\begin{enumerate}[label=(\alph*),leftmargin=20pt]
\setcounter{enumi}{8}
\item\label{it:finitespec} $\sigma(T)$ is finite,
\item\label{it:existsqn} there exists a spectral ordering for $T$ for which, in the corresponding upper triangular form $T=N+Q$,
the s.o.t.-quasinilpotent operator $Q$ is quasinilpotent.
\end{enumerate}
\end{thm}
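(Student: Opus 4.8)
The plan is to observe that the hard analytic work is already contained in Lemmas~\ref{lem:finqnthenncp} and~\ref{lem:finoneqnthenallqn}, so that the theorem reduces to fitting the two new conditions \ref{it:finitespec} and \ref{it:existsqn} into a chain of equivalences already available. Since a finite set is totally disconnected, the hypothesis that $\supp(\nu_T)$ is finite is a special case of the hypothesis of Theorem~\ref{thm:totdisc}; hence conditions \ref{it:fssdec}--\ref{it:T*} are already known to be mutually equivalent, and it suffices to slot in \ref{it:finitespec} and \ref{it:existsqn}.

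For \ref{it:finitespec} I would argue both directions in one line each. If \ref{it:fss} holds, then $\sigma(T)=\supp(\nu_T)$, which is finite by hypothesis, giving \ref{it:finitespec}. Conversely, if $\sigma(T)$ is finite it is totally disconnected, so Proposition~\ref{prop:totdiscfsd} (equivalently, Proposition~\ref{prop:totdiscBoreldec}) yields \ref{it:fss} (in fact \ref{it:fssdec}). Thus \ref{it:finitespec} joins the equivalence class. For \ref{it:existsqn}, the implication \ref{it:Qqn}$\implies$\ref{it:existsqn} needs only the existence of at least one continuous spectral ordering: take $\psi\colon[0,1]\to\Cpx$ to be any continuous path whose image contains the finite set $\supp(\nu_T)$ — for instance a concatenation of line segments through the atoms — which is a legitimate continuous spectral ordering by the remark following Theorem~\ref{thm:utdecomp}; by \ref{it:Qqn} the associated $Q$ is quasinilpotent, so \ref{it:existsqn} holds. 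For the reverse, \ref{it:existsqn}$\implies$\ref{it:Qqn}, I would invoke Lemma~\ref{lem:finoneqnthenallqn}: if \emph{some} upper-triangular form $T=N+Q$ has $Q$ quasinilpotent, then \emph{every} upper-triangular form does, in particular every one arising from a continuous spectral ordering, which is precisely \ref{it:Qqn}.

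Combining these observations with Theorem~\ref{thm:totdisc} closes the cycle and shows all of \ref{it:fssdec}--\ref{it:existsqn} are equivalent. I do not expect a genuine obstacle here; the only point requiring a moment's care is the elementary but necessary remark that continuous spectral orderings exist in this setting, so that condition \ref{it:Qqn} is not vacuous and really does imply \ref{it:existsqn}. One could alternatively route \ref{it:existsqn}$\implies$\ref{it:sncp} directly through Lemma~\ref{lem:finqnthenncp}, after first passing to the spectral ordering by disks centered at the origin so that $A=|N|$, but going through Lemma~\ref{lem:finoneqnthenallqn} into \ref{it:Qqn} is cleaner and reuses the already-established chain of Theorem~\ref{thm:decqn}.
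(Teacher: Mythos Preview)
Your proposal is correct and follows essentially the same approach as the paper: invoke Theorem~\ref{thm:totdisc} (since finite implies totally disconnected) to get the equivalence of \ref{it:fssdec}--\ref{it:T*}, observe that \ref{it:finitespec} slots in via full spectral distribution, and handle \ref{it:existsqn}$\iff$\ref{it:Qqn} through Lemma~\ref{lem:finoneqnthenallqn}. The paper's proof is terser---it simply asserts that the equivalence of \ref{it:fssdec}--\ref{it:finitespec} is ``clear'' from Theorems~\ref{thm:decqn} and~\ref{thm:totdisc} and that \ref{it:existsqn}$\iff$\ref{it:Qqn} follows from Lemma~\ref{lem:finoneqnthenallqn}---so your additional remarks (existence of a continuous spectral ordering, the explicit route for \ref{it:finitespec}) are welcome expansions rather than departures.
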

\begin{proof}
The equivalence of conditions \ref{it:fssdec}--\ref{it:finitespec} is clear from Theorems~\ref{thm:decqn} and~\ref{thm:totdisc}.
The equivalence of condition~\ref{it:existsqn} with~\ref{it:Qqn} follows from Lemma~\ref{lem:finoneqnthenallqn}.
\end{proof}

\end{document}